\begin{document}

\title[Heisenberg products: commutative and non commutative contexts.]
{The Heisenberg product: from Hopf algebras and species to symmetric functions}

\author{Marcelo Aguiar}
\email{maguiar@math.cornell.edu}
\author{Walter Ferrer Santos}
\email{wrferrer@cmat.edu.uy}
\author{Walter Moreira}
\thanks{}
\email{walter@waltermoreira.net}

\begin{abstract} Many related products and coproducts (e.g. Hadamard, Cauchy, Kronecker, induction, internal, external, Solomon, composition, 
Malvenuto--Reutenauer, convolution, etc.) have been defined in the following objects : species, representations of the symmetric groups, 
symmetric functions, endomorphisms of graded connected Hopf algebras, permutations, non--commutative symmetric functions, quasi--symmetric functions, etc. 
With the purpose of simplifying and unifying this diversity we introduce yet, another --non graded-- product the \emph{Heisenberg product}, that for 
the highest and lowest degrees produces the classical external and internal products (and their namesakes in different contexts). In order to define it, we start from the two opposite more general extremes: species in the ``commutative context'', and endomorphisms of Hopf algebras 
in the ``non--commutative'' environment. Both specialize to the space of commutative symmetric functions where 
the definitions coincide. We also deal with the different coproducts that these objects carry --to which we add the 
\emph{Heisenberg coproduct} for quasi--symmetric functions--, 
and study their Hopf algebra compatibility particularly for symmetric and non commutative symmetric functions. 
We obtain combinatorial formulas for the structure constants of the new product that extend, generalize and unify results 
due to Garsia, Remmel, Reutenauer and Solomon. In the space of quasi--symmetric functions, we describe explicitly the new operations in terms of alphabets.
\end{abstract}
       \maketitle
\newpage
       \tableofcontents
            \newcommand\textbox[2]{%
  \parbox{#1}{\tiny\sffamily%
    {\begin{center}#2\end{center}}}}
    
\section{Introduction}

\subsection{General  description of the paper}

The diagram in Figure~\ref{F:spaces} displays the array of spaces we consider in this work and the table in Figure~\ref{F:table-of-names} displays 
the variety of products that had been therein defined and given due consideration by many authors. 

In all these levels we define a new product, that we call the \emph{Heisenberg 
product} that is not graded and that at the smallest and larger 
degrees produce respectively the internal and external products 
(or their namesakes as depicted in Figure~\ref{F:table-of-names}). This is what we call 
the {\emph interpolation property} of the Heisenberg product.

The name we choose for the new product, comes from the one that it usually carries at the level of Hopf algebras.  
\begin{figure}[!ht]
  \begin{equation*}%
    \xymatrix{%
      &&
      &
      \ovalbox{\textbox{2.5cm}{Garsia-Reutenauer endomorphisms}}
      \ar@{ >->}[r]
      &
      {\cornersize*{40pt}
        \ovalbox{\textbox{2cm}{Endomorphisms\\ of graded Hopf\\ algebras}}}
      \\
      \fbox{\textbox{1cm}{Species}}
      \ar@{-> }[d]^-{\cong}
      &&
      &
      \ovalbox{\textbox{2.5cm}{Non-commutative symmetric \\ functions}}
      \ar@{ >->}[u]
      \ar@{ >->}[r]
      \ar@{->> }[d]
      &
      \ovalbox{\textbox{1.8cm}{Permutations}}
      \ar@{ >->}[u]_-{\textbox{1.5cm}{Schur-Weyl\\ duality}}
      \ar@{->> }[d]
      \\
      \fbox{\textbox{2.3cm}{Representations of the symmetric\\ group}}
      \ar@{=>}[rr]^-{\textbox{2cm}{Grothendieck\\ group}}
      &&
      \fbox{\textbox{2cm}{Completion of\\ symmetric\\ functions}}
      &
      {\setlength\fboxrule{1.5pt}%
        \fbox{\textbox{2cm}{Symmetric functions}}}
      \ar@{ >->}[r]
      \ar@{ >->}[l]
      &
      \textbox{2.3cm}{Quasi-symmetric functions}
    }
  \end{equation*}
  \caption{Relation among the spaces where the Heisenberg structure is
    introduced.}
  \label{F:spaces}
\end{figure}

Observe that we divided the spaces into three groups, marked with different
kinds of boxes in the diagram.  In the first group, marked
with square boxes, we start by constructing the Heisenberg product in the category of
species --or equivalently of representations of the symmetric groups, and we translate it to the space of symmetric functions.  We
use the Grothendieck group functor and the Frobenius characteristic map to move from categories to symmetric functions.  This work is covered in
Sections~\ref{S:species} and~\ref{S:symmetric}. In the space of symmetric functions we give an explicit combinatorial formula for the Heisenberg product in 
the linear basis of the complete homogeneous symmetric functions. Later this formula will be the main tool used to relate the commutative with the non commutative
 environment in Section~\ref{S:nc-to-commutative}. We also find an explicit formula for the structure constants of the Heisenberg product in the basis of 
power sums.  The existence of these simple formul\ae\/\, poses the problem of finding an explicit description for the structure constants on the basis of 
Schur functions. The answer would contain as extreme cases the Littlewood-Richardson rule and (a still unknown) rule for the Kronecker coefficients.

In the second group, marked with oval boxes, we start by considering the Heisenberg
product in the space of endomorphisms of a graded Hopf algebra in 
Section~\ref{S:endomorphisms}.  This product is well known in the
theory of Hopf algebras. In Section~\ref{S:permutations} we restrict it, 
via Schur-Weyl duality, to
the vector space linearly generated by permutations, 
on one side, and on the other side to what we define as the space of
descents (or Garsia--Reutenauer) endomorfisms of a Hopf algebra in Section~\ref{S:descents}.  These two constructions are confluent when we restric 
both to the space of non-commutative symmetric functions in 
Section~\ref{S:ncsymmetric}. Moreover, in this Section 
we present two different proofs of the restriction of the 
Heisenberg product to the space
of descents (non commutative symmetric functions) 
--and suggest two others, relating to work of Brown, Mahajan, Schocker and 
others--. The first proof 
 follows along the line of results of Garsia and Reutenauer characterizing 
the descents via Schur--Weyl duality. In this perspective, the invariance 
of the space of descents 
within the space of permutations with respect to the Heisenberg product, 
becomes a result in Hopf algebra theory. The second proof that has a 
combinatorial content, is based upon an explicit calculation of the 
Heisenberg product of elements of the cannonical basis $X_\alpha$, 
where $\alpha$ is a composition.         
By the interpolation property of the Heisenberg product, 
the combinatorial formula thus obtained, contains as special cases 
rules for the product in 
Solomon's descent algebra and for the external product of two basis 
elements of $\Sigma$. One readily verifies that the former is precisely the
well-known rule of Garsia, Remmel, Reutenauer, and Solomon, and the latter 
is just the usual formula for the external product of the basic elements 
of the space of descents. 
In that manner, the apparently disconnected combinatorial descriptions of 
two different products 
--the Solomon and the external product--, become  connected by the 
introduction of the new Heisenberg product.

The square boxes and the oval boxes have as common ground the bold square 
box that contains the space of symmetric functions.  
We show in Section~\ref{S:nc-to-commutative} that via the usual projection 
of non-commutative
symmetric functions onto symmetric functions the 
two definitions of Heisenberg product coincide in the
latter space .  
This --together with the formal similarity of the constructions-- 
justifies the use of the same name in both contexts.  

Moreover, the spaces of non-commutative and
commutative symmetric functions carry a well-known coalgebra
structure.  We show that this structure is compatible with the Heisenberg
product, producing in both cases related Hopf algebras. This appears in 
Section~\ref{S:hopf} and~\ref{S:isomorphisms}.

Finally, the remaining vector space  appearing in Table \ref{F:spaces}, 
is the space of quasi-symmetric
functions, which is dual to the space of non-commutative symmetric functions.
We construct by means of this duality, the Heisenberg coproduct in
quasi-symmetric functions in Section~\ref{S:quasi-symmetric}. 
In this situation, this coproduct together with the usual product, 
endow the quasi--symmetric functions with a Hopf algebra structure. 

In the Appendix, we present the proofs of three technical lemmas, 
that in order to spare the reader of some nonessential distractions, 
were omitted in the places that the statements  appeared.

\subsection{Terminology and general notations}

In all the spaces we consider there are at least two well-known
products.  Although they are closely related by the inclusions,
projections, and isomorphisms in Figure~\ref{F:spaces}, 
mathematical developments have 
given them non consistent names in many cases.

Figure~\ref{F:table-of-names} shows the more standardt nomenclature.
\renewcommand\arraystretch{1.5}
\begin{figure}[ht]
  \tiny
  \centering
  \begin{tabular}{|D||C|C|C|C|C|C|}%
    \hhline{~|------|}\multicolumn{1}{c|}{}
    & \textsf{Species}
    & \textsf{Representations}
    & \textsf{Symmetric functions}
    & \textsf{Non-commutative symmetric functions}
    & \textsf{Permutations}
    & \textsf{Endomorphisms of graded Hopf algebras}
    \tabularnewline\hhline{-|=|=|=|=|=|=|}
    \textsf{internal product}
    & Hadamard\\ ($\times$)
    & Kronecker\\ ($\internalsym$)
    & internal\\ ($\internalsym$)
    & Solomon\\ ($\internalsym$)
    & composition\\ ($\internalperm$)
    & composition\\ ($\internalend$)
    \tabularnewline\hline
    \textsf{external product}
    & Cauchy\\ ($\tensorprod$)
    & induction\\ ($\externalsym$)
    & external\\ ($\externalsym$)
    & external\\ ($\externalsym$)
    & Malvenuto-Reutenauer\\ ($\externalperm$)
    & convolution\\ ($\externalend$)
    \tabularnewline\hline
  \end{tabular}
  \caption{Standard terminology and symbols for the products}
  \label{F:table-of-names}
\end{figure}

The new product we
introduce in this article, will be called in all the different contexts, the 
{\em Heisenberg product}. 

We work over a field of characteristic zero, that will be denoted as $\field$.

When dealing with results in {\em Hopf algebra theory}, we adopt the usual notations in the area as presented for example in \cite{montgomery93}. 
We adopt Sweedler's convention for the comultiplication and write 
$\Delta(h)=\sum h_1 \otimes h_2$, moreover $S$ in general will be the antipode. An element of the Hopf algebra is said to be primitive if $\Delta(h)=h\otimes 1 + 1 \otimes h$. The space of primitive elements is denoted as $\operatorname{Prim}(H)$.  

The group of the permutations of $n$ elements will be denoted as $S_n$, $[n]:=\{1,\cdots,n\}$ and a permutation  $\sigma:[n] \rightarrow [n]$ 
frequently will be written in word format as: $\sigma(1),\cdots,\sigma(n)$.  

If $(a_1,\cdots,a_r)$ is a composition of $n$ we write $(a_1,\cdots,a_r)\models n$. If we allow some of the elements $a_i$ to be zero, we call it a \emph{weak composition} or a pseudo composition. Two compositions  with the same elements but in a different order are considered as different compositions 
but the same partition. 

For the basic notations and results in the theory of {\em representations of symmetric groups} we refer the reader to Zelevinski's \cite{zelevinski81}.  

      \part{The commutative context}

            \section{The Heisenberg product of species}
\label{S:species}

We introduce the notion of {\em Heisenberg product} of two species,
generalizing the ordinary Cauchy and Hadamard products. We follow the
notation and terminology of~\cite{aguiar04} and~\cite{bergeron98}.

Let $\Setcross$ be the category of finite sets with bijections among
them as morphisms. Let $\Vect$ or $\Vect_\field$ be the category of vector spaces over a fixed field.

\begin{definition}
  The \emph {category of species}, denoted as
  $\Sp$, is the abelian category  of functors from $\Setcross$ into $\Vect$: $\Sp= \Vect^{\Setcross}$ --with abelian structure induced by that of $\Vect$--.  An object  $\p:\Setcross\to\Vect$ of $\Sp$ is called a {\em species}. The elements of $\Sp(\p,\q)$ --i.e. the morphisms of $\Sp$--, are the natural transformations between $\p$ and $\q$. The evaluation of the species $\p$ over a set $I$ is a vector space denoted as $\p[I]$, and for a bijection $f$ the effect of the species $\p$ is denoted as $\p[f]$. The image of the set $[n] = \{1,2,\ldots, n\}$ is written, for simplicity, $\p\bigl[ [n] \bigr] = \p[n]$, and by
  convention $\p[0]=\p[\emptyset]$.
\end{definition}

For future reference we consider the following species: 
\begin{enumerate}
\item The {\em species one} $\I$: $\I[\emptyset]=\field$ and $\I[I]=0$ for $I\neq\emptyset$,
\item The {\em exponential species} $\e$: $\e[I]=\field$ for $I\neq\emptyset$.
\end{enumerate}
\begin{definition}
  The {\em Heisenberg product of species} is the functor $\smashprod:\Sp\times\Sp\to\Sp$
  given~by:
  \begin{equation}
    \label{D:smash-species}
    (\p\smashprod\q) [I]
    =
    \bigoplus_{I=S\cup T} \p[S] \otimes q[T].
  \end{equation}
  Given two finite sets $I$ and $J$, and a bijection $f:I\to J$, we
  obtain a bijection $(S,T) \mapsto \bigl( f(S), f(T) \bigr)$ between
  pairs $(S,T)$ with $I=S\cup T$, and pairs $(S',T')$ with
  $J=S'\cup T'$. The map $(\p\smashprod\q)[f]: (\p\smashprod\q)[I] \rightarrow (\p\smashprod\q)(J)$, induced
  by the maps
  \begin{equation*}
    \p[f_{|_S}]\otimes \q[f_{|_T}]
    :
    \p[S] \otimes \q[T] \to \p[S'] \otimes \q[T']
  \end{equation*}
  defines $\p\smashprod\q$ for arrows. Hence, $\p\smashprod\q \in \Sp$.
\end{definition}

This definition involves in particular two special situations: the case of the Cauchy product and of the Hadamard product of species. 

Indeed, consider the two ``limit'' situations in terms of the size of the intersection $S \cap T$ in the above sum: 
\begin{align*}
  (\p \tensorprod \q) [I] 
  &= 
  \bigoplus_{\substack{%
      I=S\cup T \\
      \emptyset = S\cap T}}\p[S]\otimes \q[T]=\bigoplus_{I=S\sqcup T}\p[S]\otimes \q[T],
  \\
  (\p \hadamardprod \q) [I] 
  &=\bigoplus_{\substack{%
      I=S\cup T \\
      S\cap T=I}}\p[S]\otimes \q[T]=
  \p[I] \otimes \q[I],
\end{align*}
where the symbol $S\sqcup T$ stands for the {\em disjoint union} of $S$ and $T$.

Since we can write:
\begin{equation*}
  (\p \smashprod \q) [I]
  =
  (\p \hadamardprod \q) [I]
  +
  \bigoplus_{\substack{%
      I=S\cup T \\
      \emptyset \neq S\cap T \neq I}}
  \p[S]\otimes \q[T]      
  +
  (\p \tensorprod \q) [I],
\end{equation*}
we say informally that the Heisenberg product ``interpolates'' between the Cauchy  and the Hadamard products --$\p \tensorprod \q$ and $\p \hadamardprod \q$ respectively--.

The triples $(\Sp,\tensorprod, \I)$ and $(\Sp,\hadamardprod, \e)$ endowed with the obvious associative constraint, are additive monoidal categories with $\I$ and $\e$ as unit objects. 

In the next theorem we present a generalization of this fact proving that $(\Sp, \smashprod, \I)$ is also monoidal and additive. 

\begin{theorem}
  The functor $\smashprod:\Sp\times\Sp \to \Sp$ together with the natural associativity constraint and with the unit object $\I$, endows the category~$\Sp$, with an additive monoidal structure.  
\end{theorem}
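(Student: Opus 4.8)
The plan is to install the associativity and unit constraints that make $\smashprod$ into a monoidal structure, and then to check the pentagon and triangle coherence axioms; additivity will be essentially automatic. Bifunctoriality of $\smashprod$ is immediate from its definition on arrows, so I would begin with associativity.

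For associativity, take three species $\mathcal{P},\mathcal{Q},\mathcal{R}$ and expand both iterated products pointwise at a finite set $I$. On the left,
\[
  \bigl((\mathcal{P}\smashprod\mathcal{Q})\smashprod\mathcal{R}\bigr)[I]
  =
  \bigoplus_{I=U\cup V}\;\Bigl(\bigoplus_{U=S\cup T}\mathcal{P}[S]\otimes\mathcal{Q}[T]\Bigr)\otimes\mathcal{R}[V].
\]
Since $U=S\cup T$ is determined by the pair $(S,T)$, the constraint $I=U\cup V$ becomes $I=S\cup T\cup V$, so this space is canonically the sum of $\mathcal{P}[S]\otimes\mathcal{Q}[T]\otimes\mathcal{R}[V]$ over all triples $(S,T,V)$ of subsets of $I$ with $S\cup T\cup V=I$. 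Expanding $\bigl(\mathcal{P}\smashprod(\mathcal{Q}\smashprod\mathcal{R})\bigr)[I]$ the same way, now with intermediate set $W=T\cup V$, produces exactly the same indexing family of triples. The associativity constraint is then the summand-by-summand application of the associator of $\Vect$, identifying $(\mathcal{P}[S]\otimes\mathcal{Q}[T])\otimes\mathcal{R}[V]$ with $\mathcal{P}[S]\otimes(\mathcal{Q}[T]\otimes\mathcal{R}[V])$. The one genuinely substantive point is set-theoretic and transparent: associativity of the union, $(S\cup T)\cup V=S\cup(T\cup V)$, is precisely what makes the two index sets coincide. I would check naturality in $I$ by observing that a bijection $f\colon I\to J$ sends the triple $(S,T,V)$ to $(f(S),f(T),f(V))$ and commutes with the $\Vect$-associator, and naturality in the three species arguments is immediate since the constraint is built componentwise from that associator.

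For the unit, I would compute $(\I\smashprod\mathcal{P})[I]=\bigoplus_{I=S\cup T}\I[S]\otimes\mathcal{P}[T]$ and note that a summand is nonzero only when $\I[S]\neq 0$, i.e. when $S=\emptyset$; but $S=\emptyset$ together with $I=S\cup T$ forces $T=I$, so the single surviving term is $\I[\emptyset]\otimes\mathcal{P}[I]=\field\otimes\mathcal{P}[I]\cong\mathcal{P}[I]$, which is the left unit isomorphism. Symmetrically, in $(\mathcal{P}\smashprod\I)[I]$ the condition forces $T=\emptyset$ and $S=I$, giving the right unit. Both are natural in $\mathcal{P}$ and in $I$ by the same bookkeeping with bijections.

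Finally I would dispatch the coherence axioms. After the expansions above, both composites around the pentagon — for four species $\mathcal{P},\mathcal{Q},\mathcal{R},\mathcal{S}$ — reduce to the single space $\bigoplus\mathcal{P}[S]\otimes\mathcal{Q}[T]\otimes\mathcal{R}[U]\otimes\mathcal{S}[V]$ indexed by quadruples of subsets with union $I$, and the two agree because the $\Vect$-associator already satisfies the pentagon on each summand; the triangle axiom reduces to the triangle identity in $\Vect$ together with $S\cup\emptyset=S$ and $\emptyset\cup T=T$. Additivity of $(\Sp,\smashprod,\I)$ is then automatic: $\Sp=\Vect^{\Setcross}$ is abelian with pointwise structure, and $\smashprod$ is built from direct sums of tensor products, so each functor $\mathcal{P}\smashprod(-)$ and $(-)\smashprod\mathcal{Q}$ preserves finite biproducts. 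I expect the only step needing real care to be the coherence bookkeeping — keeping the reindexing bijections of subsets consistent around the pentagon — but since every constraint is inherited summand by summand from the monoidal structure of $\Vect$, no new coherence has to be proved by hand.
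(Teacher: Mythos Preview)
Your argument is correct and follows essentially the same route as the paper: expand both iterated Heisenberg products at a finite set $I$, observe that each reduces to a direct sum over triples of subsets whose union is $I$ (via associativity of union), and note that the unit constraint comes from the fact that $\I[S]=0$ unless $S=\emptyset$. You are simply more thorough than the paper, which omits the explicit coherence and additivity checks you spell out.
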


\begin{proof}
  Let $\p$, $\q$, and $\r$ be three species. We prove that
  $
  \bigl((\p\smashprod\q) \smashprod\r\bigr)[I] =
  \bigl(\p \smashprod (\q \smashprod \r) \bigr) [I]
  $
  for all finite sets $I$.

  We have
  \begin{align}
    \label{E:smash-species-assoc-left}
    \bigl((\p\smashprod\q) \smashprod\r\bigr)[I]
    &=
    \bigoplus_{I=S\cup T} (\p\smashprod\q)[S]\otimes \r[T]
    =
    \bigoplus_{I=S\cup T}\bigoplus_{S=U\cup V}
    \p[U]\otimes \q[V]\otimes\r[T],
    \\
    \label{E:smash-species-assoc-right}
    \bigl(\p \smashprod (\q \smashprod \r) \bigr) [I]
    &=
    \bigoplus_{I=S'\cup T'} \p[S'] \otimes (\q\smashprod\r)[T']
    =
    \bigoplus_{I=S'\cup T'} \bigoplus_{T'=U'\cup V'}
    \p[S'] \otimes \q[U'] \otimes \q[V'],
  \end{align}
  which implies that ~\eqref{E:smash-species-assoc-left}
  and~\eqref{E:smash-species-assoc-right} coincide.  From the
  definition~\eqref{D:smash-species} it is clear that $\p\smashprod\I =
  \I\smashprod\p = \p$.
\end{proof}

In addition to the relation via interpolation of the Cauchy, Hadamard and Heisenberg products, they are also related by a natural isomorphism:
\begin{equation}
  \label{E:isomorphism-smash-composition}
  (\p\cdot\e)\times(\q\cdot\e)
  \cong
  (\p\smashprod\q)\cdot\e,
\end{equation}

The required isomorphism~\eqref{E:isomorphism-smash-composition} appears naturally when we
evaluate each side on a finite set $I$.  We have:
\begin{align*}
  \bigl((\p\cdot\e)\times(\q\cdot\e)\bigr)
  [I]
  &=
  \biggl(
  \bigoplus_{I=S\sqcup T}
  \p[S]\otimes\field
  \biggr)
  \otimes
  \biggl(
  \bigoplus_{I=S'\sqcup T'}
  \q[S']\otimes\field
  \biggr)
  \\
  \bigl((\p\smashprod\q)\cdot\e\bigr)[I]
  &=
  \bigoplus_{I=J\sqcup K}
  (\p\smashprod\q)[J]
  \otimes
  \e[K]
  =
  \bigoplus_{J\subseteq I}
  \biggl(
  \bigoplus_{J=S\cup S'}
  \p[S]\otimes\q[S']
  \biggr)
  \otimes
  \field,
\end{align*}
and clearly both spaces are naturally isomorphic.

\subsection{The Heisenberg product of representations of the symmetric group}

The language of species and the more classical language of
representations of the symmetric group are essentially the same --see for example \cite{bergeron98}--.  Using the equivalence of the categories, we translate the Heisenberg
product to the category of representations.  

Therefore, we obtain an
associative product at the level of the representations of the symmetric groups, which interpolates between the Kronecker and the induction product.

Let $\Rep(\S_n)$ be the category whose objects are finite dimensional
representations of $\S_n$ and whose morphisms are $\S_n$-module
homomorphisms.  We consider the category
\begin{equation}
  \label{E:def-representations-category}
  \representations 
  = 
  \prod_{n\ge 0} \Rep(S_n).
\end{equation}
\begin{theorem} [\cite{bergeron98}]
  \label{T:functor-Sp-Lambda}
  The functor $\functor{F}:\Sp\to\representations$ given by
  \begin{equation*}
    \functor{F}(\p)
    = 
    \bigl( \p[0], \p[1],\ldots, \p[n],\ldots\bigr)
  \end{equation*}
  is an equivalence of abelian categories.  The action of the permutation
  group $S_n$ in the space $\p[n]$ is given by $\sigma\cdot
  x=\p[\sigma](x)$.
\end{theorem}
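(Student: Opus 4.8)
The plan is to show that $\functor{F}$ is full, faithful, and essentially surjective; since both categories are abelian with structure computed pointwise in $\Vect$ and $\functor{F}$ is given by evaluation, such a functor is automatically exact, so an equivalence of underlying categories upgrades to an equivalence of abelian categories. The conceptual reason the statement holds is that the full subcategory of $\Setcross$ on the objects $[n]$, $n\ge 0$, is a skeleton: every finite set is isomorphic to a unique $[n]$, there are no morphisms between $[n]$ and $[m]$ for $n\neq m$, and the endomorphisms of $[n]$ are exactly $S_n$. Thus one expects $\Vect^{\Setcross}\simeq\prod_{n\ge0}\Rep(S_n)=\representations$, with $\functor{F}$ realized as restriction along the skeleton inclusion. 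I would, however, verify the three conditions directly, as this also pins down the asserted action $\sigma\cdot x=\p[\sigma](x)$.

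For essential surjectivity I would build an explicit quasi-inverse $\functor{G}\colon\representations\to\Sp$ by transport of structure. Given $V=(V_n)_{n\ge0}$ and a finite set $I$ with $|I|=n$, set
\begin{equation*}
  \functor{G}(V)[I]=\Bigl(\bigoplus_{f\colon[n]\to I}V_n\Bigr)\big/\!\sim,
\end{equation*}
where $f$ runs over bijections and $(f\circ\sigma,\,v)\sim(f,\,\sigma\cdot v)$ for $\sigma\in S_n$; a bijection $g\colon I\to J$ acts by relabelling $f\mapsto g\circ f$, which descends to $\functor{G}(V)[g]$. Evaluating at $I=[n]$ and using the representative $f=\mathrm{id}$ identifies $\functor{G}(V)[n]\cong V_n$, and a short computation shows $\functor{G}(V)[\tau]$ corresponds to $x\mapsto\tau\cdot x$, so $\functor{F}\functor{G}\cong\mathrm{id}$. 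The companion isomorphism $\functor{G}\functor{F}\cong\mathrm{id}$ is given on $\functor{G}(\functor{F}(\p))[I]$ by $(f,v)\mapsto\p[f](v)$, which one checks is well defined on $\sim$-classes, bijective, and natural in $I$.

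For fullness and faithfulness, note that a morphism $\functor{F}(\p)\to\functor{F}(\q)$ is precisely a family of $S_n$-equivariant maps $\eta_n\colon\p[n]\to\q[n]$. Given such a family, for each $I$ with $|I|=n$ choose a bijection $f\colon[n]\to I$ and define
\begin{equation*}
  \eta_I=\q[f]\circ\eta_n\circ\p[f]^{-1}.
\end{equation*}
Naturality of any extension forces this formula, giving uniqueness and hence faithfulness; conversely, once $\eta_I$ is shown independent of $f$ and natural in $I$, it provides the required extension, giving fullness.

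The main obstacle --- and essentially the only substantive verification --- is the independence of $\eta_I$ from the chosen bijection. Replacing $f$ by $f\circ\sigma$ conjugates the formula by $\q[\sigma]$ and $\p[\sigma]$, and these cancel exactly because $S_n$-equivariance of $\eta_n$ reads $\q[\sigma]\circ\eta_n=\eta_n\circ\p[\sigma]$ under the action $\sigma\cdot x=\p[\sigma](x)$. The same equivariance is what makes the relation $\sim$ in the definition of $\functor{G}$ consistent and $\functor{G}(V)[g]$ well defined. Everything else --- functoriality of $\functor{F}$ and $\functor{G}$ and the naturality squares --- is formal and follows from the skeleton description.
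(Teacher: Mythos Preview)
Your argument is correct. The paper does not supply its own proof of this theorem; it is stated with a citation to \cite{bergeron98} and used as a black box. Your approach---verifying that $\functor{F}$ is full, faithful, and essentially surjective by constructing the explicit quasi-inverse via transport of structure along bijections $[n]\to I$---is the standard one and matches what is found in the cited reference. There is nothing further to compare.
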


Using the functor $\functor{F}$ we can express the Heisenberg product in
terms of representations.  The explicit construction uses the
induction and restriction of representations to certain subgroups
defined as follows.

Let $p$, $q$ be non-negative integers. Given permutations $\sigma\in
\Perm_p$ and $\tau\in \Perm_q$, let $\sigma\times\tau\in \Perm_{p+q}$
be the permutation
\begin{equation}
  \label{D:definition-times}
  (\sigma\times\tau)(i)
  =
  \begin{cases}
    \sigma(i) 
    & 
    \text{ if $1\leq i\leq p$,}
    \\
    \tau(i-p)+p
    &
    \text{ if $p+1\leq i\leq p+q$.}
  \end{cases}
\end{equation}
This operation gives an embedding of $\Perm_p\times\Perm_q$ into
$\Perm_{p+q}$ called the \emph{parabolic embedding}.

Let $n$ be an integer satisfying $\max(p,q)\le n\le p+q$. Define the
set $\Perm_p\times_n\Perm_q=\Perm_{n-q} \times \Perm_{p+q-n} \times
\Perm_{n-p}$, and consider the embeddings
\begin{align}
  \label{E:embed-times-sub-n-in-Sn}
  \S_p \times_n \S_q \mono \S_n,
  & \quad 
  (\sigma,\rho,\tau)\mapsto \sigma\times\rho\times\tau,
  \\
  \label{E:embed-times-sub-n-in-times}
  \S_p \times_n \S_q \mono \S_p\times\S_q,
  & \quad
  (\sigma,\rho,\tau)\mapsto (\sigma\times\rho,\rho\times\tau).
\end{align}

\begin{definition}
  The {\em Heisenberg product of representations} is the functor $\smashprod:
  \representations\times \representations \to \representations$ defined for
  $V\in\Rep(\S_p)$ and $W\in\Rep(\S_q)$ as
  \begin{equation}
    \label{E:definition-smash-representations}
    V \smashprod W 
    = 
    \bigoplus_{n=\max(p,q)}^{p+q} 
    \Ind_{\S_p\times_n \S_q}^{\S_n}
    \Res_{\S_p\times_n\S_q}^{\S_p\times\S_q} (V\otimes W).
  \end{equation}
\end{definition}

Let $(V\smashprod W)_n$ denote the component of degree $n$
in~\eqref{E:definition-smash-representations} and the top
component, that is, when $n=p+q$.  In this case, the
embedding~\eqref{E:embed-times-sub-n-in-times} is the identity
and~\eqref{E:embed-times-sub-n-in-Sn} is the standard parabolic
embedding $\S_p \times \S_q \mono \S_{p+q}$. Then, we have that:
\begin{equation*}
  (V\smashprod W)_{p+q}
  =
  \Ind_{\S_p\times \S_q}^{\S_{p+q}}(V\otimes W),
\end{equation*}
which is the usual induction product of
representations~\cite{geissinger77,zelevinski81}.

On the other hand, when $n=p=q$, the 
embedding~\eqref{E:embed-times-sub-n-in-Sn} is the identity
and~\eqref{E:embed-times-sub-n-in-times} is the diagonal embedding
$\S_n \mono \S_n\times\S_n$.  Therefore,
\begin{equation*}
(V\smashprod W)_n = \Res^{\S_n\times \S_n}_{\S_n}(V\otimes W),
\end{equation*}
which is the Kronecker product of
representations~\cite{geissinger77,zelevinski81}.

The Heisenberg product contains terms of intermediate degrees between
$\max(p,q)$ and $p+q$; in this sense it ``interpolates'' between the
Kronecker and induction products.  It is a remarkable fact  that, as the
Kronecker and induction products, the Heisenberg product is associative. 

Moreover, it can be lifted to other settings ( permutations, non-commutative symmetric functions, and dually, quasi-symmetric functions) and for all the 
instances of our construction of the new product, these properties hold.

The next theorem proves that the operation defined
in~\eqref{E:definition-smash-representations} is the translation of
the Heisenberg product in species defined in~\eqref{D:smash-species}.  Note
that although the functor $\functor{F}$ is an equivalence of
categories, the language of species is considerably cleaner than the
language of representations.  The lengthy verifications in
Theorem~\ref{T:equivalence-species-representations} ilustrate this
claim.

\begin{theorem}
  \label{T:equivalence-species-representations}
  The Heisenberg product of representations endows
  $\representations$ with an additive monoidal structure  with unit objet  the representation --denoted as $\I$-- that is $\field$ in degree zero and zero elsewhere. Moreover, the functor
  $\functor{F}$ given in Theorem~\ref{T:functor-Sp-Lambda} is monoidal, i.e. 
\begin{equation}
    \label{E:F-preserves-smash}
    \functor{F}(\p\smashprod\q)
    \cong
    \functor{F}(\p) \smashprod \functor{F}(\p)
    \quad,\quad
    \functor{F}(\I)
    \cong
    \I
  \end{equation}
  for species $\p$ and $\q$.
\end{theorem}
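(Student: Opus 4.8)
The plan is to reduce the whole statement to a single natural isomorphism and then transport structure along the equivalence $\functor{F}$. Since Theorem~\ref{T:functor-Sp-Lambda} says $\functor{F}$ is an equivalence of abelian categories, and the monoidal structure on $(\Sp,\smashprod,\I)$ has already been established, it suffices to produce a natural isomorphism $\phi_{\p,\q}\colon\functor{F}(\p\smashprod\q)\xrightarrow{\ \sim\ }\functor{F}(\p)\smashprod\functor{F}(\q)$ together with $\functor{F}(\I)\cong\I$, and to check compatibility with the associativity and unit constraints. Additivity of $\smashprod$ on $\representations$ is then immediate, since $\Ind$, $\Res$, and $\otimes$ are additive, so the construction~\eqref{E:definition-smash-representations} is biadditive in $V$ and $W$. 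Once $\phi$ is shown to be a coherent (strong) monoidal structure on $\functor{F}$, I would define the associativity constraint on $\representations$ by transporting the species associator through $\phi$; the pentagon and triangle axioms on $\representations$ then follow from those on $\Sp$ because $\functor{F}$ is fully faithful and essentially surjective. The unit object is $\functor{F}(\I)$, which is $\field$ in degree $0$ and $0$ elsewhere, exactly the claimed $\I$.

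The heart of the argument is thus the construction of $\phi_{\p,\q}$ in each degree $n$. First I would recast the species product combinatorially: a decomposition $[n]=S\cup T$ is the same datum as an ordered set partition $[n]=A\sqcup B\sqcup C$ via $A=S\setminus T$, $B=S\cap T$, $C=T\setminus S$ (equivalently $S=A\cup B$ and $T=B\cup C$), so that
\[
(\p\smashprod\q)[n]=\bigoplus_{[n]=A\sqcup B\sqcup C}\p[A\cup B]\otimes\q[B\cup C].
\]
Putting $p=|A|+|B|$ and $q=|B|+|C|$ gives $|A|=n-q$, $|B|=p+q-n$, $|C|=n-p$, together with $\max(p,q)\le n\le p+q$, so the sum may be regrouped by the pair $(p,q)$, reproducing exactly the index range of~\eqref{E:definition-smash-representations}.

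Next I would identify each size class with an induced representation. For fixed $(p,q)$, the ordered set partitions of type $(n-q,\,p+q-n,\,n-p)$ are in bijection with the cosets $\S_n/(\S_p\times_n\S_q)$, via $g\,(\S_p\times_n\S_q)\mapsto(g(A_0),g(B_0),g(C_0))$ for the standard blocks $A_0,B_0,C_0$ of sizes $n-q,\,p+q-n,\,n-p$; write $S_0=A_0\cup B_0=[p]$ and $T_0=B_0\cup C_0$. Using $\Ind_{\S_p\times_n\S_q}^{\S_n}M=\bigoplus_{g(\S_p\times_n\S_q)}g\otimes M$ with $M=\p[p]\otimes\q[q]=\p[S_0]\otimes\q[T_0]$, and transporting the summand $g\otimes M$ by $\p[g|_{S_0}]\otimes\q[g|_{T_0}]$ onto $\p[g(S_0)]\otimes\q[g(T_0)]=\p[A\cup B]\otimes\q[B\cup C]$, I obtain the degree-$n$ linear isomorphism $\phi_{\p,\q}$. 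The restriction along~\eqref{E:embed-times-sub-n-in-times} is precisely what makes $\S_p\times_n\S_q$ act on $\p[p]\otimes\q[q]$ through $(\sigma,\rho,\tau)\mapsto(\sigma\times\rho,\rho\times\tau)$, which is the action the species functoriality already induces on $\p[S_0]\otimes\q[T_0]$.

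The hard part will be verifying $\S_n$-equivariance of $\phi_{\p,\q}$. On the left, $\sigma\in\S_n$ sends the $(S,T)$-summand to the $(\sigma S,\sigma T)$-summand through $\p[\sigma|_S]\otimes\q[\sigma|_T]$; on the right, $\sigma\cdot(g\otimes m)=g'\otimes(k\cdot m)$, where $\sigma g=g'k$ with $g'$ a coset representative and $k\in\S_p\times_n\S_q$. This is the ``lengthy verification'' the text anticipates: it amounts to showing that relabelling the blocks $A,B,C$ by $\sigma$ is left multiplication on cosets, while the residual block permutation $k$ reproduces the functoriality maps $\p[\cdot]$ and $\q[\cdot]$, using the compatibility of the two embeddings~\eqref{E:embed-times-sub-n-in-Sn} and~\eqref{E:embed-times-sub-n-in-times}. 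Naturality of $\phi_{\p,\q}$ in $\p$ and $\q$ is then clear from the functoriality of induction and restriction, and compatibility with the associativity constraint follows by comparing both iterated products degree by degree against the triple decompositions $[n]=U\cup V\cup T$ that appear in the associativity proof for species given above.
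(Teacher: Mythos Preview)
Your proposal is correct and follows essentially the same approach as the paper: both reduce to constructing, for each triple $(i,j,n)$ with $\max(i,j)\le n\le i+j$, an $\S_n$-equivariant isomorphism between the species side $\bigoplus_{[n]=S\cup T,\,|S|=i,\,|T|=j}\p[S]\otimes\q[T]$ and the induced/restricted module, by passing to the ordered set decomposition $A\sqcup B\sqcup C$ and identifying the index set with cosets of $\S_p\times_n\S_q$ in $\S_n$. Your coset representative $g$ with transport $\p[g|_{S_0}]\otimes\q[g|_{T_0}]$ is exactly the paper's map $\psi$ built from the canonical bijections $f_{S,T}$; the paper then carries out in full the equivariance verification you describe as ``the lengthy verification the text anticipates,'' while you (reasonably) leave it as an acknowledged computation.
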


\begin{proof}
It is enough to verify that the Heisenberg product of representations
defined by~\eqref{E:definition-smash-representations}
satisfies~\eqref{E:F-preserves-smash}, since $\functor{F}$ is
already an equivalence of categories and $\Sp$ is a tensor category
with the Heisenberg product and with unit $\I$.

Fix $i$, $j$, and $n$, three non-negative integers such that
$\max(i,j)\le n\le i+j$. We claim that we have an isomorphism in
$\representations$:
\begin{equation}
  \label{E:iso-species-induction-restriction}
  \bigoplus_{
    \substack{
      [n]=S\cup T \\
      \card S = i \\
      \card T = j}}
  \p[S] \otimes \q[T]
  \cong
  \Ind_{\S_i\times_n\S_j}^{\S_n}
  \Res_{\S_i\times_n\S_j}^{\S_i\times\S_j}
  \bigl( \p[i] \otimes \q[j] \bigr).
\end{equation}
Once this isomorphism is established, taking the direct sum over $i$
and $j$, we obtain the $n$-th coordinate of
$\functor{F}(\p\smashprod\q)$ in the left hand side, and the $n$-th
coordinate of the product $\functor{F}(\p)\smashprod \functor{F}(\q)$
on $\representations$ in the right hand side.

The following fact can be proved easily. Let $A$ and $B$ be finite
totally ordered sets. Given decompositions
$A=A_1\sqcup\dots\sqcup A_n$
and
$B=B_1\sqcup\dots\sqcup B_n$,
with $\card A_i=\card B_i$ for $i=1,\dots,n$, there is only one
bijection $f:A\to B$ such that $f(A_i) = B_i$ and $f_i =
f_{|_{A_i}}:A_i\to B_i$ is increasing, for all $i=1,\dots,n$. We call
$f$ the {\em canonical} bijection between $A$ and $B$ induced by the
partitions.

To establish the isomorphism we consider the following definitions.  Given
$S$ and $T$ such that $[n]=S\cup T$, let $S'=S\setminus T$ and
$T'=T\setminus S$. If $\card S=i$ and $\card T=j$, then let
$f_{S,T}:[n]\to [n]$ be the canonical bijection induced by the
following partitions of~$[n]$:
\begin{equation*}
  S'\sqcup (S\cap T) \sqcup T'
  \quad\text{and}\quad
  [n-j]\sqcup [n-j+1,i]\sqcup [i+1,n]
\end{equation*}
and let $f_{S'}$, $f_{S\cap T}$, and $f_{T'}$, be the restriction to
the corresponding subsets.  From the monotonicity conditions for
$f_{S,T}$, we get that $f_{S,T}^{-1}$ belongs to $\S_i\times_n\S_j$.

We consider the standard identification of the induction module
$\Ind_H^G(V)$ with the tensor product $\field G\otimes_{\field H} V$.  Let $u\in\p[S]$
and $v\in\q[T]$, and define the map
\begin{equation}
\begin{aligned}
  \label{E:def-of-psi}
    \p[S] \otimes \q[T]
    &
    \stackrel{\psi}{\longrightarrow}
    \Ind_{\S_i\times_n\S_j}^{\S_n}
    \Res_{\S_i\times_n\S_j}^{\S_i\times\S_j}
    \bigl( \p[i] \otimes \q[j] \bigr)
    \\
    u\otimes v
    &
    \longmapsto
    f_{S,T}^{-1} \otimes 
    \bigl( \p[f_{S'}\sqcup f_{S\cap T}] (u)
           \otimes
           \q[f_{S\cap T}\sqcup f_{T'}] (v)
    \bigr)
\end{aligned}
\end{equation}
and extend it to the direct sum
in~\eqref{E:iso-species-induction-restriction}.

For a permutation $\sigma\in \S_n$, the action of $\sigma$ in
$u\otimes v$ is, according to Theorem~\ref{T:functor-Sp-Lambda},
\begin{equation}
  \label{E:Sn_acts_tensor}
  \sigma\cdot (u\otimes v) 
  =
  \p[\sigma_{|_S}](u)\otimes \q[\sigma_{|_T}](v).
\end{equation}
Observe that $\sigma\cdot(u\times v)\in \p\bigl[ \sigma(S) \bigr]
\otimes \q\bigl[ \sigma(T) \bigr]$. The application of the map $\psi$
yields
\begin{equation}
  \label{E:psi_of_the_action}
  \psi\bigl( \sigma\cdot(u\otimes v) \bigr)
  =
  f_{\sigma(S),\sigma(T)} \otimes
    \bigl( \alpha(u)
           \otimes
           \beta(v)
    \bigr)
\end{equation}
where 
$
\alpha=\p[ f_{\sigma(S')} \sqcup f_{\sigma(S)\cap\sigma(T)} ] 
\p[\sigma_{|_S}]
$
and 
$
\beta=\q[ f_{\sigma(S)\cap
  \sigma(T)} \sqcup f_{\sigma(T')} ] \q[\sigma_{|_T}]
$.
Since we can decompose $\sigma_{|_S}$ into $\sigma_{|_S'} \sqcup
\sigma_{|_{S\cap T}}$, then by the functoriality of $\p$ we get that
\begin{equation*}
  \alpha 
  = 
  \p\bigl[ 
  (f_{\sigma(S')}\sigma_{|_{S'}}) \sqcup 
  (f_{\sigma(S\cap T)}\sigma_{|_{S\cap T}})
  \bigr].
\end{equation*}
Let 
$
\tilde\sigma_{S'}
$
and 
$
\tilde\sigma_{S\cap T}
$
be the only bijections such that the following diagrams commute
\begin{equation}
  \label{E:canonical_bijections_commuting}
  \xymatrix{
    S' \ar[r]^{f_{S'}} \ar[d]_{\sigma_{|_{S'}}}
    & 
    [i] \ar[d]^{\tilde\sigma_{S'}} 
    \\
    \sigma(S') \ar[r]^-{f_{\sigma(S')}} 
    &
    [i]}
  \qquad
  \xymatrix@C=35pt{
    S\cap T \ar[r]^-{f_{S\cap T}} \ar[d]_{\sigma_{|_{S\cap T}}}
    & 
    [n-j+1,i]] \ar[d]^{\tilde\sigma_{S\cap T}} 
    \\
    \sigma(S\cap T) \ar[r]^-{f_{\sigma(S\cap T)}} 
    &
    [n-j+1,i]}
\end{equation}
We conclude that $\alpha$ can be rewritten as 
\begin{equation*}
\alpha 
= 
\p\bigr[ (\tilde\sigma_{S'} \sqcup \tilde\sigma_{S\cap T})
( f_{S'} \sqcup f_{S\cap T} ) \bigr]
=
\p[ \tilde\sigma_{S'} \sqcup \tilde\sigma_{S\cap T} ]
\,
\p[ f_{S'} \sqcup f_{S\cap T} ) ],
\end{equation*}
and we proceed similarly with $\beta$.  

\noindent
In accordance with~\eqref{E:Sn_acts_tensor} we deduce that:
\begin{equation*}
\alpha(u)\otimes \beta(v)
=
(
\tilde\sigma_{S'} \sqcup 
\tilde\sigma_{S\cap T} \sqcup
\tilde\sigma_{T'}
)
\cdot
    \bigl( \p[f_{S'}\sqcup f_{S\cap T}] (u)
           \otimes
           \q[f_{S\cap T}\sqcup f_{T'}] (v)
    \bigr).
\end{equation*}
Note that the permutation
$
\tilde\sigma_{S'} \sqcup 
\tilde\sigma_{S\cap T} \sqcup
\tilde\sigma_{T'}
$
clearly belongs to $\S_i\times_n\S_j$. In
equation~\eqref{E:psi_of_the_action}, since the tensor product of
$f^{-1}_{\sigma(S),\sigma(T)}$ with $\alpha(u)\otimes\beta(v)$ is performed with respect to this subgroup, we can move the permutation to the left
factor where we get
$
 f_{\sigma{S},\sigma{T}}^{-1}
(
\tilde\sigma_{S'} \sqcup 
\tilde\sigma_{S\cap T} \sqcup
\tilde\sigma_{T'})
 = 
\sigma f_{S,T}^{-1}
$.
This equality results again from the
diagrams~\eqref{E:canonical_bijections_commuting}. This is precisely
the definition of the action of $\sigma$ on the image of the map $\psi$.

The map $\psi$ is invertible, since for any element $\sigma\otimes
(x\otimes y)$, we decompose
$\sigma=\xi(\alpha\times\beta\times\gamma)$, where
$\alpha\times\beta\times\gamma\in\S_{n-j}\times\S_{i+j-n}\times\S_{n-i}=\S_i\times_n
\S_j$ and $\xi$ is increasing in the intervals $[n-j]$, $[n-j+1,i]$,
and $[i+1,n]$. Define the disjoint sets 
\begin{equation*}
  A 
  = 
  \xi\bigl( [n-j] \bigr),
  \quad
  B
  =
  \xi\bigl( [n-j+1,i] \bigr),
  \quad
  C
  = 
  \xi\bigl( [i+1,n] \bigr).
\end{equation*}
Then, let $S=A\sqcup B$ and $T=B\sqcup C$. It is straightforward to
find $u\otimes v$ in $\p[S]\otimes \q[T]$ such that $\psi(u\otimes v)
= \sigma\otimes (x\otimes y)$. Similarly, this process applied to the
image of $\psi$ in~\eqref{E:def-of-psi} yields back $u\otimes v$.
\end{proof}

\subsection{The generating function of the Heisenberg product of two species}

The {\em generating function} associated to a species $\p$ is the
formal series
\begin{equation*}
  F_\p(x)
  =
  \sum_{n\ge 0} \dim_\field\p[n]\, \frac{x^n}{n!}.
\end{equation*}
The generating series associated to the Cauchy product
$\p\tensorprod\q$ of two species is the usual (Cauchy) product of the
power series $F_\p$ and $F_\q$.  Similarly, the generating series of
$\p\hadamardprod\q$ is the Hadamard product of the generating series
of $\p$ and $\q$.  Explicitly, if $F_\p(x)=\sum_{n\ge 0}a_n
x^n/n!$ and $F_\q(x)=\sum_{n\ge 0}b_n x^n/n!$, then
\begin{equation*}
  F_{\p\tensorprod\q}(x)
  =
  \sum_{n\ge 0}
  \biggl(
  \sum_{i+j=n} \binom{n}{i}a_ib_j
  \biggr)
  \frac{x^n}{n!}
  \quad\text{and}\quad
  F_{\p\hadamardprod\q}(x)
  =
  \sum_{n\ge0}
  a_nb_n\,
  \frac{x^n}{n!}.
\end{equation*}
The classical names for these operations among formal series justify
the names for the Cauchy and Hadamard products of species.

\begin{theorem}
  The generating function of the Heisenberg product of two species~$\p$
  and $\q$~is
  \begin{equation*}
    F_{\p\smashprod\q}(x)
    =
    \sum_{n\ge0}
    \biggl(
    \sum_{\substack{i,j\le n\\ n\le i+j}}
    \binom{n}{n-i,n-j,i+j-n} a_ib_j
    \biggr)
    \frac{x^n}{n!},
  \end{equation*}
where \[\binom{n}{n-i,n-j,i+j-n}=\dfrac{n!}{(n-i)!(n-j)!(i+j-n)!}.\]
\end{theorem}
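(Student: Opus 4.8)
The plan is to read off $\dim_\field(\p\smashprod\q)[n]$ directly from the defining direct sum~\eqref{D:smash-species} and then assemble the exponential generating function. Write $a_i=\dim_\field\p[i]$ and $b_j=\dim_\field\q[j]$. The first observation is that, since $\p$ and $\q$ are functors on $\Setcross$, every bijection induces an isomorphism of vector spaces; hence $\dim_\field\p[S]=a_{\card S}$ and $\dim_\field\q[T]=b_{\card T}$ depend only on the cardinalities of $S$ and $T$. Taking dimensions in
\[
(\p\smashprod\q)[n]=\bigoplus_{[n]=S\cup T}\p[S]\otimes\q[T]
\]
and grouping the summands according to $i=\card S$ and $j=\card T$, I would obtain
\[
\dim_\field(\p\smashprod\q)[n]=\sum_{i,j}N^{(n)}_{i,j}\,a_ib_j,
\]
where $N^{(n)}_{i,j}$ denotes the number of pairs $(S,T)$ of subsets of $[n]$ with $S\cup T=[n]$, $\card S=i$, and $\card T=j$.

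The heart of the argument is the evaluation of $N^{(n)}_{i,j}$, which is a purely combinatorial count. The key idea is that a pair $(S,T)$ with $S\cup T=[n]$ is the same datum as an ordered partition of $[n]$ into the three disjoint blocks $A=S\setminus T$, $B=S\cap T$, and $C=T\setminus S$: indeed $S=A\sqcup B$ and $T=B\sqcup C$ recovers the pair, and this correspondence is a bijection. From $\card A+\card B=i$, $\card B+\card C=j$, and $\card A+\card B+\card C=n$ one solves $\card B=i+j-n$, whence the three blocks have sizes $n-j$, $i+j-n$, and $n-i$ (which sum to $n$, as they must). The number of such ordered set partitions is exactly the multinomial coefficient
\[
N^{(n)}_{i,j}=\binom{n}{n-j,\,i+j-n,\,n-i}=\binom{n}{n-i,n-j,i+j-n}.
\]
The range of summation falls out of requiring the three block sizes to be non-negative: $n-i\ge0$ and $n-j\ge0$ give $i,j\le n$, while $i+j-n\ge0$ gives $n\le i+j$; outside this range no pair $(S,T)$ exists and $N^{(n)}_{i,j}=0$. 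Substituting this count, dividing by $n!$, and summing over $n$ yields precisely the claimed formula for $F_{\p\smashprod\q}(x)$.

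I expect no serious obstacle here: the only point requiring care is the bookkeeping that converts the condition $S\cup T=[n]$ into an ordered three-block partition with the correct sizes, the crucial observation being that $\card(S\cap T)=i+j-n$ is forced. Once that is in place, the identification with the multinomial coefficient and the range of summation are immediate, and the passage from $\dim_\field(\p\smashprod\q)[n]$ to the generating function is formal. As a consistency check one can inspect the two extreme terms: $i+j=n$ forces $\card(S\cap T)=0$ and recovers the binomial coefficient $\binom{n}{i}$ of the Cauchy product, whereas $i=j=n$ forces $S=T=[n]$ and contributes $a_nb_n$, matching the Hadamard product — in agreement with the interpolation property.
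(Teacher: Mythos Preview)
Your argument is correct and coincides with the paper's own proof: both reduce the count to the bijection $(S,T)\leftrightarrow(S\setminus T,\,S\cap T,\,T\setminus S)$ between pairs with $S\cup T=[n]$ and ordered three-block partitions of $[n]$, yielding the multinomial coefficient. Your added remarks (that $\dim_\field\p[S]$ depends only on $\card S$, and the Cauchy/Hadamard consistency check) are welcome but do not alter the route.
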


\begin{proof}
  In the definition of the Heisenberg product of
  two species --Definition~\eqref{D:smash-species}--, the pairs of sets $(S,T)$ such that
  $[n]=S\cup T$, $\card S=i$, and $\card T=j$, are in bijection with
  the decompositions $[n]=U\sqcup W\sqcup V$ with $\card U + \card W =
  i$ and $\card W+\card V=j$.  Indeed, just take $U=S\setminus T$,
  $V=T\setminus S$, and $W=S\cap T$.
  Clearly, we obtain $\card U=n-i$, $\card V=n-j$, and $\card
  W=i+j-n$.  The multinomial coefficient in the formula of
  $F_{\p \smashprod\q}$ stands precisely for the number of possible
  ways to choose the decomposition $U,V,W$.
\end{proof}



            \section{The Heisenberg product of symmetric functions}
\label{S:symmetric}
\subsection{Species, representations of symmetric groups and symmetric functions}\label{SS:generalities}
In this subsection we recall some basic facts about the relations between species --viewed as such or as representations of the family of all symmetric groups-- and the space of symmetric functions.  

Let $\K(\S_n)$ be the Grothendieck group or representation group of the category of finite dimensional
$\S_n$-modules, and call $\groth$ and $\grothcomplete$ the groups:
\begin{equation*}
\groth
  =
  \bigoplus_{n\ge 0} \K(\S_n) 
  \quad  
  \subseteq
  \quad 
  \grothcomplete
  =
  \prod_{n\ge 0} \K(\S_n).
\end{equation*}

Consider the ring of polynomials $\field[x_1,\cdots,x_n]$ in $n$ variables in which the symmetric group $\S_n$ acts by permuting the variables. 
Call $\Lambda^k_n$ the subring consisting of the homogeneous polynomials of degree $k$ which are invariant under the action of $\S_n$. When $m \geq n$, $\Lambda_m^k$ proyects naturally onto  $\Lambda_n^k$ via the homomorphism $\rho^k_{m,n}: \Lambda_m^k \rightarrow \Lambda_n^k$ which maps the first $n$ variables to themselves, and the other variables to 0. The space $\Lambda^k$ is defined as the inverse limit of the system considered above. 

For $\sym^k$ the \emph{space of symmetric functions} of degree $k$, define
\begin{equation*}
  \sym
  =
  \bigoplus_{k\ge 0}\sym^k
  \quad
\subseteq
\quad
  \symcomplete
  =
  \prod_{k\ge 0}\sym^k,
\end{equation*}
called the \emph{space of symmetric functions} and its \emph{completion},
respectively (see~\cite{macdonald95}).

Observe that $\sym$ and $\symcomplete$ are subspaces of $\field[x_1,x_2,\cdots]$ and $\field \llbracket x_1,x_2,\cdots\rrbracket$ respectively.

We recall the following definition of special elements in $\Lambda$. 
\begin{enumerate}
\item \emph{The elementary symmetric functions} are defined by the generating series:\[\sum_{r\geq 0}e_r(x_1,x_2,\cdots)t^r=\prod_{i \geq 1}(1+x_it),\]
\item \emph{The complete homogeneous symmetric functions} are defined by:
\[\sum_{r\geq 0}h_r(x_1,x_2,\cdots)t^r=\prod_{i \geq 1}(1-x_it)^{-1},\]
\item \emph{The power sums} are defined by:
\[\sum_{r\geq 0}p_r(x_1,x_2,\cdots)t^r=\sum_{i \geq 1}x_i(1-x_it)^{-1}.\]
\end{enumerate}

The above defined functions are elements of $\field \llbracket x_1,x_2,\cdots\rrbracket$, if we want to consider the corresponding elements in 
$\field [x_1,x_2,\cdots x_n]$ we simply set $0= x_{n+1}=x_{n+2}=\cdots$. 

These, can in turn be defined in terms of the \emph{monoidal symmetric functions}.

A \emph{partition} --finite or almost finite-- $\alpha=(a_1,a_2,\cdots)$ with $a_1 \geq a_2 \geq \cdots \geq 0$ determines a monomial $x^\alpha=x_1^{a_1} x_2^{a_2} \cdots$.

The monomial symmetric function associated to $\alpha$ and denoted as $m_\alpha$ is \[m_\alpha=\sum_{\{\widehat{\alpha}\,:\, \widehat{\alpha} \leadsto \alpha\}}x^\alpha,\] where $\widehat{\alpha}$ stands for a composition and the symbol $\widehat{\alpha} \leadsto \alpha$ means that the mentioned composition produces the given partition $\alpha$ by permutation of the entries. For example $m_{(21)} = \sum_{i \neq j} x_i^2x_j$. 

We have the following equalites:

\noindent
\emph{Elementary symmetric functions}:  $e_r=m_{(1^r)}$ where $(1^r)$ is the partition of $r$ formed only by $1$'s.
\\
\noindent
\emph{Complete homogenous symmetric functions}: $h_r=\sum_{\{\alpha: |\alpha|=r\}} m_\alpha$ where the sum is taken over all the partitions of $r$.
\\ 
\noindent
\emph{Power sums}: $p_r=m_{(r)}$, where $(r)$ is the partition $(r,0,0\cdots)$.

For an arbitrary partition $\alpha=(a_1,a_2,\cdots)$ we define:
$e_\alpha=e_{a_1}e_{a_2}\cdots$, $h_\alpha=h_{a_1}h_{a_2}\cdots$, $p_\alpha=p_{a_1}p_{a_2}\cdots$.

When $\alpha$ runs over all partitions, the set of all functions $m_\alpha$ form a $\Z$--basis of $\Lambda$, and the same happens with the sets of the $e_\alpha$'s or the set of the $h_{\alpha}$'s. The set of the $p_\alpha$'s form a $\Q$--basis of $\Lambda_\Q$. 

The Frobenius characteristic map is
the linear isomorphism
\begin{equation*}
  \frobenius:\grothcomplete\otimes_\Z\field\to\symcomplete,
  \quad
  \frobenius(V)
  =
  \frac{1}{n!}
  \sum_{\sigma\in\S_n} \character{V}(\sigma) p_{\cycle(\sigma)},
\end{equation*}
where $V$ is a representation of $\S_n$, $\character{V}$ its character and $p_{\cycle(\sigma)}$ is
the power sum associated the partition of $n$ defined by to the cycle-type of $\sigma$.  The map
$\frobenius$ restricts to an isomorphisms of $\groth$ and $\sym$. See \cite[Proposition I.7.3]{macdonald95} for proofs of the isomorphisms.

The above result, yields another perspective regarding the complete homogeneous symmetric functions. 

Indeed, if $\alpha=(a_1,\ldots,a_r)$ is a composition of $n$ and
\begin{equation*}
  \S_\alpha
  =
  \S_{a_1}\times\dots\times\S_{a_r},
\end{equation*}
it can be viewed as a subgroup of $\S_n$ by
iterating~\eqref{D:definition-times}.  These are the so called \emph{parabolic subgroups} of $\S_n$.  Let $\mathfrak h_\alpha$ denote the permutation representation of $\S_n$ corresponding to the action by multiplication
on the quotient $\S_n/\S_\alpha$.  The isomorphism class of $\mathfrak h_\alpha$ does not depend on the order of the parts of $\alpha$, hence we will consider the representations $\mathfrak h_\alpha$ for $\alpha$ running over the
partitions of $n$.

If we denote the trivial $\S_\alpha$-module by $\trivialrep$ (we omit
the dependence on $\alpha$ for clarity), then the representation
$\mathfrak h_\alpha$ can also be expressed as
\begin{equation}
  \label{E:def-of-h-alpha-as-induction}
  \mathfrak h_\alpha = \Ind_{\S_\alpha}^{\S_n}(\trivialrep).
\end{equation}

The following equality holds $\frobenius(\mathfrak h_\alpha)=h_\alpha$ (see \cite[Proposition I.7.3]{macdonald95}).  

\subsection{The Heisenberg product of complete homogeneous symmetric functions}
\label{SS:complete-homogeneous}

In Section \ref{S:species} and in Subsection \ref{SS:generalities} we have established a path between the objects described in the diagram below:
\begin{equation*}\xymatrix{\Sp \ar[r]^{\mathcal F}& \representations
 \ar@{=>}[r]{}& {\grothcomplete \supset \groth}\ar[r]^-{\otimes_\Z\field}&\groth_\field \ar[r]^{\frobenius}& \Lambda,}
\end{equation*}
where the double arrow means the application of the Grothendieck functor. 


Using the universal property of the Grothendick group functor, it is clear that in order to translate the Heisenberg product from $\representations$ to $\grothcomplete$ it is enough to verify that it is compatible with direct sums. 

It is easy to make this verification in the category of species, where
the colimits are defined pointwisely as the colimits of vector
spaces.  Then, the distributive property of the tensor product with
respect to direct sums shows that $(\p\oplus\q)\smashprod \r =
(\p\smashprod\r)\oplus (\q\smashprod r)$ for $\p\,,\q\,,\r \in \representations$.

Hence the operation $\smashprod$ can be defined in $\grothcomplete$, and the subgroup $\groth$ is clearly  $\smashprod$-- closed since the definition of the Heisenberg product involves only a finite number of summands.



Now, by composition with the Frobenius characteristic isomorphism $\frobenius: \groth_\field \rightarrow \Lambda$ we
obtain an associative product on symmetric functions, which we call
{\em Heisenberg product of symmetric functions}.

The next theorem gives an explicit formula for the Heisenberg product in
the linear basis of $\sym$ formed by the complete homogenous symmetric
functions.  This theorem, besides providing a combinatorial rule 
useful for computations, will later be used to make the connection with the Heisenberg product of non-commutative symmetric functions in
Section~\ref{S:nc-to-commutative}.

In order to express the coefficients of the Heisenberg product of two
complete homogenous symmetric functions, we need to define a
particular set of plane partitions as follows.  Let
$\alpha=(a_1,\ldots,a_r)\composition p$ and
$\beta=(b_1,\ldots,b_s)\composition q$ be two compositions and $n$ an
integer with $\max(p,q)\leq n\leq p+q$. Let $a_0=n-p$, $b_0=n-q$, and
let $\M_{\alpha,\beta}^n$ be the set of all
$(s+1)\times(r+1)$-matrices
\begin{equation*}
  M=(m_{ij})_{
    \substack{0\leq i\leq s\\
      \,0\leq j\leq r}}
\end{equation*}
with non-negative integer entries and such that 
\begin{itemize}
\item the sequence of column sums  is $(a_0,a_1,\ldots,a_r)$,
\item the sequence of row sums  is $(b_0,b_1,\ldots,b_s)$,
\item the first entry is $m_{00}=0$.
\end{itemize}
We illustrate these conditions as follows:
\begin{equation*}
    \begin{smasharray}{cccc!{\putbar}c}
      0 & m_{01} & \cdots & m_{0r} & n-q \\ 
      m_{10} & m_{11} & \cdots & m_{1r} & b_1 \\
      \vdots & \vdots & \ddots & \vdots & \vdots \\
      m_{s0} & m_{s1} & \cdots & m_{sr} & b_s \\ 
      \cline{1-4} 
      n-p & a_1  & \cdots &  a_r \omitbar
    \end{smasharray}
\end{equation*}
Let $p(M)$ be the partition of $n$ whose parts are the non-zero
$m_{ij}$.


\begin{theorem}
  \label{T:lambda-smash-combinatorial}
  There is an associative product $\smashprod$ in $\Lambda$,
  interpolating between the internal and external products, which can
  be expressed in the basis $(h_\alpha)$ of complete homogeneous
  functions as
  \begin{equation}
    \label{E:smash-complete-homogeneous}
    h_\alpha \smashprod h_\beta = \sum_{n=\max(p,q)}^{p+q}
    \sum_{M \in \M_{\alpha,\beta}^n} h_{p(M)}.
  \end{equation}
\end{theorem}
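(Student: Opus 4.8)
The plan is to transport the identity from the category of species, where the Heisenberg product has the transparent combinatorial shape~\eqref{D:smash-species}, and then apply the Frobenius characteristic. Since $\frobenius$ is an isomorphism that intertwines the two Heisenberg products --by construction along the chain $\Sp\to\representations\to\groth\to\Lambda$ together with Theorem~\ref{T:equivalence-species-representations}-- it suffices to compute the product of the two species whose Frobenius images are $h_\alpha$ and $h_\beta$. Writing $\alpha=(a_1,\dots,a_r)\composition p$, let $\mathcal{E}_\alpha$ be the species concentrated in degree $p$ with $\mathcal{E}_\alpha[I]=\bigoplus_{I=S_1\sqcup\cdots\sqcup S_r,\,\card S_k=a_k}\field$; this is the Cauchy product of the exponential species in degrees $a_1,\dots,a_r$, so $\functor{F}(\mathcal{E}_\alpha)$ is $\mathfrak h_\alpha=\Ind_{\S_\alpha}^{\S_p}(\trivialrep)$, whence $\frobenius(\mathcal{E}_\alpha)=h_\alpha$ by the identity stated above. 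Define $\mathcal{E}_\beta$ similarly for $\beta=(b_1,\dots,b_s)\composition q$. (Equivalently one could run Mackey's formula on $\Res\,\Ind(\mathfrak h_\alpha\otimes\mathfrak h_\beta)$ at the level of representations, but the species route is cleaner.)

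The heart of the argument is to identify $(\mathcal{E}_\alpha\smashprod\mathcal{E}_\beta)[n]$ as an $\S_n$-module. Expanding~\eqref{D:smash-species}, and since $\mathcal{E}_\alpha[S]$ vanishes unless $\card S=p$ and $\mathcal{E}_\beta[T]$ vanishes unless $\card T=q$, a basis of $(\mathcal{E}_\alpha\smashprod\mathcal{E}_\beta)[n]$ consists of the data of a decomposition $[n]=S\cup T$ with $\card S=p$, $\card T=q$, an ordered set partition $S=S_1\sqcup\cdots\sqcup S_r$ with $\card S_j=a_j$, and an ordered set partition $T=T_1\sqcup\cdots\sqcup T_s$ with $\card T_i=b_i$. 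I would repackage such a datum as a single map $\phi\colon[n]\to(\{0,\dots,s\}\times\{0,\dots,r\})\setminus\{(0,0)\}$, where $\phi(x)=(i,j)$ records that $x$ lies in $T_i$ --with $i=0$ meaning $x\notin T$-- and in $S_j$ --with $j=0$ meaning $x\notin S$--; the omission of $(0,0)$ is exactly the condition $x\in S\cup T$. The fibre sizes $m_{ij}=\card\phi^{-1}(i,j)$ then assemble into a matrix with $m_{00}=0$, column sums $(a_0,a_1,\dots,a_r)$ and row sums $(b_0,b_1,\dots,b_s)$; using $\card(S\cap T)=p+q-n$ one checks $a_0=\card(T\setminus S)=n-p$ and $b_0=\card(S\setminus T)=n-q$, so that $M=(m_{ij})\in\M_{\alpha,\beta}^n$, and the range $\max(p,q)\le n\le p+q$ is precisely $0\le p+q-n\le\min(p,q)$.

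It remains to analyze the $\S_n$-action, which permutes the maps $\phi$ by relabelling the ground set. Two maps lie in one orbit if and only if they share the same shape matrix $M$, since one may permute elements freely inside each fibre; hence the orbits are indexed by $M\in\M_{\alpha,\beta}^n$, and the stabilizer of a given $\phi$ is the Young subgroup $\prod_{i,j}\S_{m_{ij}}\cong\S_{p(M)}$, whose parts are the nonzero $m_{ij}$. The orbit is therefore isomorphic as an $\S_n$-set to $\S_n/\S_{p(M)}$ and contributes the permutation module $\mathfrak h_{p(M)}$, so that
\begin{equation*}
  (\mathcal{E}_\alpha\smashprod\mathcal{E}_\beta)[n]\;\cong\;\bigoplus_{M\in\M_{\alpha,\beta}^n}\mathfrak h_{p(M)}.
\end{equation*}
Applying $\frobenius$ with $\frobenius(\mathfrak h_{p(M)})=h_{p(M)}$ and summing over $\max(p,q)\le n\le p+q$ gives~\eqref{E:smash-complete-homogeneous}. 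Associativity and the interpolation property are inherited: associativity from the monoidal structure already established for $\smashprod$, and interpolation from the facts that the top term $n=p+q$ is the induction (external) product while the bottom term, occurring when $p=q=n$, is the Kronecker (internal) product.

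The step demanding the most care is the repackaging of the basis into the matrices $\M_{\alpha,\beta}^n$, in particular the bookkeeping of the two boundary sums $a_0=n-p$ and $b_0=n-q$ and the forced vanishing $m_{00}=0$; once the basis is correctly matched with shape matrices, the orbit--stabilizer count and the passage through $\frobenius$ are routine.
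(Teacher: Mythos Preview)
Your proof is correct, and it takes a genuinely different route from the paper's. The paper works at the level of the category $\representations$: it writes $(\mathfrak h_\alpha\smashprod\mathfrak h_\beta)_n=\Ind_{\S_p\times_n\S_q}^{\S_n}\Res_{\S_p\times_n\S_q}^{\S_p\times\S_q}\Ind_{\S_\alpha\times\S_\beta}^{\S_p\times\S_q}(\trivialrep)$, applies Mackey's formula to interchange the inner $\Res$ and $\Ind$, and then invokes a technical lemma (Lemma~\ref{L:bijection-for-ha-smash-hb}, proved in the appendix) establishing a bijection between the double cosets $(\S_p\times_n\S_q)\backslash(\S_p\times\S_q)/(\S_\alpha\times\S_\beta)$ and $\M_{\alpha,\beta}^n$, under which the intersection subgroup $\S_\alpha\times_n^\upsilon\S_\beta$ becomes $\S_{p(M_\upsilon)}$. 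You instead stay at the species level, where the Heisenberg product is a transparent direct sum over covers $[n]=S\cup T$, and read off the $\S_n$-module structure by a direct orbit--stabilizer argument on the set of ``placement maps'' $\phi$. Your approach is more elementary and entirely self-contained---no Mackey, no double-coset bookkeeping---and it makes the appearance of the matrix conditions (in particular $m_{00}=0$, $a_0=n-p$, $b_0=n-q$) completely transparent. The paper's route, on the other hand, situates the computation within standard induction--restriction machinery and exhibits $\M_{\alpha,\beta}^n$ explicitly as a double-coset set, a description that is independently useful; indeed the paper itself remarks that the species language is ``considerably cleaner'', and your argument is a nice confirmation of that.
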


For example, using such theorem we get
\begin{equation*}
  h_{(2,1)}\smashprod h_3 
  = 
  h_{(2,1)}+ h_{(1,1,1,1)}+ h_{(2,1,1)} +
  h_{(2,2,1)}+h_{(2,1,1,1)} + h_{(3,2,1)},
\end{equation*}
where the external product is recognized in the last term and the
internal product in the first one, together with additional terms of
degrees four and five.



The existence of this operation poses the problem of finding an
explicit description for its structure constants on the basis of Schur
functions.  The answer would contain as extreme cases the
Littlewood-Richardson rule and (a still unknown) rule for the
Kronecker coefficients.

\begin{proof}[Proof of Theorem~\ref{T:lambda-smash-combinatorial}]
  We prove that the following formula holds in the category
  $\representations$:
  \begin{equation*}
    \mathfrak h_\alpha \smashprod \mathfrak h_\beta = \bigoplus_{n=\max(p,q)}^{p+q}
    \bigoplus_{M \in \M_{\alpha,\beta}^n} \mathfrak h_{p(M)},
  \end{equation*}
  where the representations $\mathfrak h_\alpha$ are the induced representations
  defined in~\eqref{E:def-of-h-alpha-as-induction}.  The application
  of the Grothendieck group functor and the Frobenius characteristic
  immediately yields~\eqref{E:smash-complete-homogeneous}.

    We fix $n$ in the range $\max(p,q)\le n\le p+q$.  The $n$-summand of
  $\mathfrak h_\alpha\smashprod \mathfrak h_\beta$ is, according
  to~\eqref{E:definition-smash-representations},
  \begin{equation}
    \label{E:nth-term-of-ha-smash-hb}
    (\mathfrak h_\alpha\smashprod \mathfrak h_\beta)_n
    =
    \Ind_{\S_p\times_n\S_q}^{\S_n}
    \Res_{\S_p\times_n\S_q}^{\S_p\times\S_q}
    ( \mathfrak h_\alpha \otimes \mathfrak h_\beta )
    =
    \Ind_{\S_p\times_n\S_q}^{\S_n}
    \Res_{\S_p\times_n\S_q}^{\S_p\times\S_q}
    \Ind_{\S_\alpha\times\S_\beta}^{\S_p\times\S_q}
    (\trivialrep).
  \end{equation}

  Consider the composition of the first two functors
  $
  \Res_{\S_p\times_n\S_q}^{\S_p\times\S_q}
  \Ind_{\S_\alpha\times\S_\beta}^{\S_p\times\S_q}
  $
  in the right hand side
  of~\eqref{E:definition-smash-representations}.  We use Mackey's
  formula to interchange them (see~\cite{weintraub03}), as follows.

  Let $\Upsilon\subset \S_p\times\S_q$ be a complete set of representatives of
  the family of double cosets
  $
  (\S_p\times_n\S_q) \mathbin{\backslash} 
  (\S_p\times\S_q) \mathbin{/} 
  (\S_\alpha\times\S_\beta)
  $.
  \newline
\noindent
For each $v\in\Upsilon$, define
  \begin{equation}
    \label{E:def-v-parabolic}
    {}^\upsilon(\S_\alpha\times\S_\beta) 
    = 
    \upsilon^{-1}(\S_\alpha\times\S_\beta)\upsilon
    \quad\text{and}\quad
    \S_\alpha\times_n^\upsilon\S_\beta
    =
    (\S_p\times_n\S_q) \cap {}^\upsilon(\S_\alpha\times\S_\beta).
  \end{equation}
  The following diagram ilustrates the relative position of these
  groups and subgroups
  \begin{equation*}
    \xymatrix@C=10pt@R=10pt{ 
      \S_n 
      & & 
      \S_p\times\S_q 
      \\
      & 
      \S_p \times_n \S_q 
      \ar@{ >->}[ul] \ar@{ >->}[ur] 
      & & 
      \S_\alpha \times \S_\beta  
      \ar@{ >->}[ul] 
      \\
      & & 
      \S_\alpha \times_n^\upsilon \S_\beta 
      \ar@{ >->}[ul] \ar@{ >->}[ur]
      & 
    }
  \end{equation*}
  In this situation Mackey's formula reads as the equality
  \begin{equation*}
    \Res_{\S_p\times_n\S_q}^{\S_p\times\S_q}
    \Ind_{\S_\alpha\times\S_\beta}^{\S_p\times\S_q}
    (\trivialrep)
    =
    \bigoplus_{\upsilon\in\Upsilon}
    \Ind_{\S_\alpha\times_n^\upsilon\S_\beta}^{\S_p\times_n\S_q}
    \Res_{\S_\alpha\times_n^\upsilon\S_\beta}^{\S_\alpha\times\S_\beta}
    (\trivialrep).
  \end{equation*}
  Using the transitivity of the induction functor and the property
  that it commutes with coproducts we deduce
  that~\eqref{E:nth-term-of-ha-smash-hb} can be written as
  \begin{equation}
    \label{E:sum-over-double-cosets}
    (\mathfrak h_\alpha\smashprod \mathfrak h_\beta)_n
    =
    \Ind_{\S_p\times_n\S_q}^{\S_n}
    \Res_{\S_p\times_n\S_q}^{\S_p\times\S_q}
    ( \mathfrak h_\alpha \otimes \mathfrak h_\beta )
    =
    \bigoplus_{\upsilon\in\Upsilon}
    \Ind_{\S_\alpha\times_n^\upsilon\S_\beta}^{\S_n}
    (\trivialrep).
  \end{equation}
  In Lemma~\ref{L:bijection-for-ha-smash-hb} we construct a
  bijection $\upsilon\mapsto M_\upsilon$ between $\Upsilon$ and
  $\M_{\alpha,\beta}^n$ with the property that
  $\S_{p(M_\upsilon)}=\S_\alpha\times_n^\upsilon \S_\beta$.
  Then~\eqref{E:sum-over-double-cosets} becomes
  \begin{equation*}
    (\mathfrak h_\alpha\smashprod \mathfrak h_\beta)_n
    =
    \bigoplus_{\upsilon\in\Upsilon}
    \Ind_{\S_\alpha\times_n^\upsilon\S_\beta}^{\S_n}
    (\trivialrep)
    =
    \bigoplus_{\upsilon\in\Upsilon}
    \Ind_{\S_{p(M_\upsilon)}}^{\S_n}(\trivialrep)
    =
    \bigoplus_{M\in\M_{\alpha,\beta}^n}
    \mathfrak h_{p(M)},
  \end{equation*}
  proving the theorem.
\end{proof}

\begin{lemma}
  \label{L:bijection-for-ha-smash-hb}
  In the notations of Theorem~\ref{T:lambda-smash-combinatorial},
  there is a bijection $\Upsilon\cong\M_{\alpha,\beta}^n$ given by
  $\upsilon\mapsto M_\upsilon$, such that
  $
  \S_{p(M_\upsilon)} = \S_\alpha\times_n^\upsilon \S_\beta
  $.
\end{lemma}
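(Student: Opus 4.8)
The plan is to realize the double cosets as orbits and read the matrix off as a complete orbit invariant. Write $G=\S_p\times\S_q$, let $P=\S_p\times_n\S_q$ be its subgroup embedded by~\eqref{E:embed-times-sub-n-in-times}, and let $Q=\S_\alpha\times\S_\beta$. Since $Q$ is the stabilizer in $G$ of the pair consisting of the ordered set partition $[p]=A_1\sqcup\dots\sqcup A_r$ of type $\alpha$ and the ordered set partition $[q]=B_1\sqcup\dots\sqcup B_s$ of type $\beta$, the quotient $G/Q$ is the set of pairs of ordered set partitions of $[p]$ of type $\alpha$ and of $[q]$ of type $\beta$. The double cosets $\Upsilon=P\backslash G/Q$ are exactly the $P$-orbits on $G/Q$, and a representative $\upsilon$ has orbit-stabilizer $P\cap{}^\upsilon Q=\S_\alpha\times_n^\upsilon\S_\beta$; so it suffices to classify these orbits and compute their stabilizers.

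The first step is to make the diagonal shape of $P$ explicit. By~\eqref{E:embed-times-sub-n-in-times}, $P$ permutes three disjoint groups of points: the first $n-q$ elements $A$ of $[p]$, the last $n-p$ elements $C$ of $[q]$, and a block $D$ of size $p+q-n$ obtained by identifying the last $p+q-n$ elements of $[p]$ with the first $p+q-n$ elements of $[q]$, on which the middle factor $\S_{p+q-n}$ acts diagonally. Gluing along this identification realizes $[n]=A\sqcup D\sqcup C$ with $[p]=A\sqcup D$ and $[q]=D\sqcup C$, and turns $P$ into the Young subgroup $\S_A\times\S_D\times\S_C$ of $\S_n$, compatibly with~\eqref{E:embed-times-sub-n-in-Sn}.

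Next I define the map $\upsilon\mapsto M_\upsilon$. Represent the orbit of $\upsilon$ by the ($\upsilon$-translated) ordered partitions $(P_1,\dots,P_r)$ of $[p]$ and $(Q_1,\dots,Q_s)$ of $[q]$, and set $P_0:=C$ and $Q_0:=A$, so that $(P_0,\dots,P_r)$ and $(Q_0,\dots,Q_s)$ are ordered partitions of $[n]$ of sizes $(a_0,\dots,a_r)$ and $(b_0,\dots,b_s)$. I put $m_{ij}=\card(P_j\cap Q_i)$ for $0\le i\le s$, $0\le j\le r$, the intersections taken inside $[n]$; this reduces to $m_{0j}=\card(A\cap P_j)$, $m_{i0}=\card(C\cap Q_i)$, and $m_{ij}=\card(P_j\cap Q_i)$ (an intersection contained in $D$) for $i,j\ge1$. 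The column sums then recover $(a_0,\dots,a_r)$ and the row sums $(b_0,\dots,b_s)$, while $m_{00}=\card(C\cap A)=0$ because $A\subseteq[p]$ and $C\subseteq[q]$ are the disjoint private parts; hence $M_\upsilon\in\M_{\alpha,\beta}^n$, exactly as in the classical matrix description of double cosets of Young subgroups, now subject to the single corner relation.

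Finally, bijectivity and the stabilizer are a counting-with-invariants argument. Because $\S_A$, $\S_C$ permute $A$, $C$ arbitrarily and $\S_D$ permutes $D$ diagonally, the matrix $M_\upsilon$ is a complete invariant of the $P$-orbit (injectivity), and every $M\in\M_{\alpha,\beta}^n$ is realized by placing $m_{0j}$ elements of $A$ into $P_j$, $m_{i0}$ elements of $C$ into $Q_i$, and $m_{ij}$ elements of $D$ into $P_j\cap Q_i$, the margin conditions guaranteeing that each block gets the prescribed size (surjectivity). The stabilizer in $\S_A\times\S_D\times\S_C$ of such a configuration fixes each of these incidence blocks setwise, hence equals the Young subgroup $\prod_{j\ge1}\S_{m_{0j}}\times\prod_{i\ge1}\S_{m_{i0}}\times\prod_{i,j\ge1}\S_{m_{ij}}=\S_{p(M)}$, which is precisely $\S_\alpha\times_n^\upsilon\S_\beta$. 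The one delicate point, and the main obstacle, is bookkeeping the diagonally shared block $D$: each of its elements must be made to contribute simultaneously to one $P$-block and one $Q$-block, and one must check that the three families of counts $m_{0j}$, $m_{i0}$, $m_{ij}$ are mutually independent and constrained only by the prescribed margins together with $m_{00}=0$.
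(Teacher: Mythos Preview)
Your argument is correct and takes a genuinely different, more conceptual route than the paper's. The paper works directly with explicit elements: given $\upsilon=\sigma\times\tau\in\S_p\times\S_q$, it extracts the shuffle parts $\zeta_\alpha$, $\zeta_\beta$ of $\sigma$, $\tau$ relative to $\S_\alpha$, $\S_\beta$, splits each parabolic interval $E_i\subseteq[p]$, $F_j\subseteq[q]$ according to where these shuffles carry it relative to the cuts at $n-q$ and $p+q-n$, and then defines the matrix entries as cardinalities of the resulting intersections $\zeta_\alpha(E''_i)\cap\bigl(n-q+\zeta_\beta(F'_j)\bigr)$; it must then verify by hand that these numbers are invariant under left multiplication by $\S_p\times_n\S_q$ and under right multiplication by $\S_\alpha\times\S_\beta$, and finally that elements of $\S_\alpha\times_n^\upsilon\S_\beta$ preserve each of the pieces setwise. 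You instead invoke the orbit--stabilizer picture from the outset: identifying $G/Q$ with pairs of ordered set partitions and gluing $[n]=A\sqcup D\sqcup C$ so that $P$ becomes the Young subgroup $\S_A\times\S_D\times\S_C$, the matrix is simply the standard contingency-table invariant for a Young-subgroup orbit on a pair of ordered set partitions, with the single extra relation $m_{00}=0$ forced by the disjointness of the private blocks $A$ and $C$. Your argument is shorter and places the lemma squarely within the classical description of double cosets of Young subgroups; the paper's argument, while more laborious, has the virtue of producing an explicit canonical representative (the shuffle pair $\zeta_\alpha\times\zeta_\beta$) for each double coset, which makes the inverse map concrete. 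One minor point: in both approaches the identification $\S_{p(M_\upsilon)}=\S_\alpha\times_n^\upsilon\S_\beta$ is really an identification of the stabilizer with a product of full symmetric groups on subsets of sizes $m_{ij}$, hence conjugate in $\S_n$ to the standard parabolic $\S_{p(M)}$; this is all that is needed since $\Ind_{\S_{p(M)}}^{\S_n}(\trivialrep)\cong\mathfrak h_{p(M)}$ depends only on the conjugacy class.
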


\begin{proof}
  The proof of this rather technical lemma is postponed until the appendix~\ref{P:bijection-for-ha-smash-hb}.
\end{proof}

With the help of the coproduct, one can produce a simple relation between the Heisenberg product and the external and internal products in symmetric functions. The coproduct in this situation is dual to the
external product and can be defined on the generators of the basis of complete homogeneous symmetric functions as:
\begin{equation}\label{E:coproductoncompletehom}
  \Delta(h_a) = \sum_{i+j=a} h_i\otimes h_j.
\end{equation}  
The identity that follows was suggested to the authors by A. Zelevinski and does not hold for the space of non-commutative symmetric funcions (see comment after Theorem~\eqref{T:smash-X}).
\begin{lemma}[A. Zelevinski]
 \label{L:zelevinski-identity}
Assume that $f,g \in \Lambda$, then:
\begin{equation}
  \label{E:zelevinski-identity}
  f\smashprod g
  =
  \sum f_1\externalsym (f_2\internalsym g_1)\externalsym g_2,
\end{equation}
where $\Delta(f)=\sum f_1\otimes f_2$ and $\Delta(g)=\sum g_1\otimes
g_2$.
\end{lemma}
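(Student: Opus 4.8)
The plan is to prove the identity at the level of representations of the symmetric groups and then transport it to $\Lambda$ via the Frobenius characteristic $\frobenius$. Recall that under $\frobenius$ the external product $\externalsym$ corresponds to the induction product, the coproduct $\Delta$ corresponds to the restriction functor, and the internal product $\internalsym$ corresponds to the Kronecker (diagonal tensor) product. Since $\frobenius$ is a linear isomorphism and every operation in~\eqref{E:zelevinski-identity} is bilinear, it suffices to establish, for $V\in\Rep(\S_p)$ and $W\in\Rep(\S_q)$, the representation-theoretic identity
\begin{equation*}
  V \smashprod W = \sum V_1 \externalsym (V_2 \internalsym W_1) \externalsym W_2,
\end{equation*}
where $\sum V_1\otimes V_2$ and $\sum W_1\otimes W_2$ denote the total restrictions (coproducts) of $V$ and $W$.

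Next I would unpack the definition~\eqref{E:definition-smash-representations}. Fix $n$ with $\max(p,q)\le n\le p+q$ and set $m=p+q-n$, so that $\S_p\times_n\S_q=\S_{n-q}\times\S_m\times\S_{n-p}$. The first step is to analyze $\Res^{\S_p\times\S_q}_{\S_p\times_n\S_q}(V\otimes W)$ along the embedding~\eqref{E:embed-times-sub-n-in-times}, $(\sigma,\rho,\tau)\mapsto(\sigma\times\rho,\rho\times\tau)$. Under this map the block $\S_{n-q}\times\S_m$ lands in $\S_p$, so restricting $V$ yields the coproduct component $\sum V_1\otimes V_2$ with $V_1\in\Rep(\S_{n-q})$ and $V_2\in\Rep(\S_m)$; likewise $\S_m\times\S_{n-p}$ lands in $\S_q$ and restricting $W$ gives $\sum W_1\otimes W_2$ with $W_1\in\Rep(\S_m)$ and $W_2\in\Rep(\S_{n-p})$. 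The crucial observation is that the middle factor $\S_m$ (the copy of $\rho$) acts \emph{simultaneously} on $V_2$ and on $W_1$, hence diagonally, so that
\begin{equation*}
  \Res^{\S_p\times\S_q}_{\S_p\times_n\S_q}(V\otimes W) = \sum V_1\otimes(V_2\internalsym W_1)\otimes W_2
\end{equation*}
as a representation of $\S_{n-q}\times\S_m\times\S_{n-p}$.

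Finally I would apply $\Ind_{\S_p\times_n\S_q}^{\S_n}$ and use that inducing an external tensor product from a parabolic subgroup is exactly the iterated induction product, so the $n$-th component of $V\smashprod W$ equals $\sum V_1\externalsym(V_2\internalsym W_1)\externalsym W_2$ with $\card{V_2}=\card{W_1}=m$. As $n$ ranges over $[\max(p,q),p+q]$, the integer $m=p+q-n$ ranges over $[0,\min(p,q)]$, which is precisely the set of degrees in which $V_2\internalsym W_1$ can be nonzero; conversely $f_2\internalsym g_1$ vanishes whenever $\card{f_2}\neq\card{g_1}$, so summing the \emph{total} coproducts of $V$ and $W$ reproduces exactly these terms and no others. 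This gives the representation identity, and applying $\frobenius$ yields~\eqref{E:zelevinski-identity}.

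The step requiring the most care is the middle one: I must check that the inner blocks of $\S_p$ and $\S_q$ singled out by~\eqref{E:embed-times-sub-n-in-times} are genuinely identified, so that the shared factor acts diagonally and produces the Kronecker product rather than a second external factor. This is precisely the structural feature of $\Lambda$ that, as noted after the statement of Theorem~\ref{T:smash-X}, fails for non-commutative symmetric functions. As an alternative, purely combinatorial route, one may reduce by bilinearity to $f=h_\alpha$, $g=h_\beta$, expand the right-hand side using~\eqref{E:coproductoncompletehom}, the classical contingency-table formula for the internal product $h_\mu\internalsym h_\nu$, and concatenation for the external product, and then match the resulting data bijectively with the matrices $\M_{\alpha,\beta}^n$ of Theorem~\ref{T:lambda-smash-combinatorial}: the first row and column record the two coproduct splittings, the interior block records the contingency table of the internal product, and the vanishing corner $m_{00}=0$ reflects that no part is simultaneously external on both sides.
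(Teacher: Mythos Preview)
Your argument is correct. The main route you take---unpacking $\Res_{\S_p\times_n\S_q}^{\S_p\times\S_q}$ via the embedding~\eqref{E:embed-times-sub-n-in-times} and observing that the shared middle block $\S_m$ acts diagonally on $V_2\otimes W_1$, thereby producing the Kronecker product, and then inducing up to $\S_n$ to get the triple external product---is a clean structural proof straight from definition~\eqref{E:definition-smash-representations}.

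This is \emph{not} the paper's route. The paper proves the lemma purely combinatorially from Theorem~\ref{T:lambda-smash-combinatorial}: given $M\in\M_{\alpha,\beta}^n$, the first row (summing to $n-q$) and first column (summing to $n-p$) are read as the pieces $(h_\alpha)_1$ and $(h_\beta)_2$ of the coproducts, while the remaining $s\times r$ block is a contingency table encoding $(h_\alpha)_2\internalsym(h_\beta)_1$; the forced entry $m_{00}=0$ matches your observation that no part is external on both sides. This is exactly the ``alternative, purely combinatorial route'' you sketch at the end of your proposal. Your representation-theoretic argument is arguably more transparent as to \emph{why} the identity holds and does not rely on the matrix formula, whereas the paper's approach is shorter given that Theorem~\ref{T:lambda-smash-combinatorial} is already in hand and makes the combinatorial bijection explicit. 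Either route is complete; your caution about the diagonal identification of the two inner $\S_m$-blocks is exactly the point that fails in the non-commutative setting, and you have it right here.
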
 

\begin{proof}
The identity~\eqref{E:zelevinski-identity} follows from
formula~\eqref{E:smash-complete-homogeneous}, by collecting the first
row and first column of the matrix $M$ as $(h_\alpha)_1$ and
$(h_\beta)_2$, respectively, and the remaining submatrix of $M$ is
precisely the internal product of the second tensorand $(h_\alpha)_2$ of the coproduct of
$h_\alpha$ with the first tensorand $(h_\beta)_1$ of the coproduct of $h_\beta$.
\end{proof}

\subsection{The Heisenberg product  of power sums}
\label{SS:powersums}
The power sums $(p_\lambda)_{\lambda \vdash n, n\ge 0}$ form a linear
basis of $\Lambda$ over $\Q$.  In this subsection we give an explicit
formula for the Heisenberg product in this basis.

Given two partitions $\lambda$ and $\mu$, denote by $\lambda\mu$ the
concatenation and reordering of $\lambda$ and $\mu$.  For example,
if $\lambda=(3,2,1,1)$ and $\mu=(2,2,1)$, then
$\lambda\mu=(3,2,2,2,1,1,1)$.




\begin{theorem}
  The Heisenberg product in the basis of power sums can be expressed as
  \begin{equation}
    \label{E:smash-power-sums}
    p_\lambda \smashprod p_\mu
    =
    \sum_{\substack{\alpha\gamma=\lambda\\\gamma \beta=\mu}}
    z(\gamma) \; p_{\alpha\gamma\beta},
  \end{equation}
  where $z(\gamma)$ is the order of the stabilizer of the conjugacy
  class of a permutation of cycle-type $\gamma$:
  \begin{equation}
    \label{E:number-z}
    z(\gamma)=\prod_r r^{m_r} m_r!,
  \end{equation}
  being $m_r$ the number of times $r$ occurs in $\gamma$.
\end{theorem}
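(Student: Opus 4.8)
The plan is to derive the formula directly from Zelevinski's identity (Lemma~\ref{L:zelevinski-identity}), which expresses $\smashprod$ through the external product $\externalsym$, the internal product $\internalsym$, and the coproduct $\Delta$ dual to $\externalsym$. Thus I need three standard facts about power sums in $\Lambda$: first, that the external product is concatenation, $p_\alpha \externalsym p_\beta = p_{\alpha\beta}$, which is immediate since $\externalsym$ is ordinary multiplication and $p_\lambda = p_{\lambda_1}\cdots p_{\lambda_\ell}$; second, the orthogonality relation for the internal product,
\[
  p_\gamma \internalsym p_{\gamma'} = \delta_{\gamma,\gamma'}\, z(\gamma)\, p_\gamma,
\]
(see \cite[I.7]{macdonald95}), which is exactly the input that produces the scalar $z(\gamma)$; and third, the coproduct on power sums.

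For the coproduct, I would first record that each $p_n$ is primitive, $\Delta(p_n)=p_n\otimes 1+1\otimes p_n$. This is consistent with the defining formula~\eqref{E:coproductoncompletehom}: writing the generating series $H(t)=\sum_r h_r t^r$ one has $\Delta H(t)=H(t)\otimes H(t)$, and since $\sum_{n\ge 1} p_n t^n/n=\log H(t)$, applying $\Delta$ turns the product $H(t)\otimes H(t)$ into a sum of logarithms, forcing each $p_n$ to be primitive. Because $\Delta$ is an algebra map for $\externalsym$, it follows that
\[
  \Delta(p_\lambda)=\prod_i\bigl(p_{\lambda_i}\otimes 1+1\otimes p_{\lambda_i}\bigr)
  =\sum_{\alpha\gamma=\lambda} p_\alpha\otimes p_\gamma,
\]
where the sum runs over all ways of distributing the parts of $\lambda$ between $\alpha$ and $\gamma$, each such splitting counted once, so that sub-partitions arising from repeated parts occur with the appropriate multiplicity.

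Finally I would substitute into Zelevinski's identity. Writing $\Delta(p_\lambda)=\sum_{\alpha\gamma=\lambda}p_\alpha\otimes p_\gamma$ and $\Delta(p_\mu)=\sum_{\gamma'\beta=\mu}p_{\gamma'}\otimes p_\beta$, Lemma~\ref{L:zelevinski-identity} gives
\[
  p_\lambda\smashprod p_\mu
  =\sum_{\alpha\gamma=\lambda}\ \sum_{\gamma'\beta=\mu}
   p_\alpha\externalsym\bigl(p_\gamma\internalsym p_{\gamma'}\bigr)\externalsym p_\beta.
\]
The orthogonality relation collapses the inner double sum to the terms with $\gamma=\gamma'$ (as partitions), replacing $p_\gamma\internalsym p_{\gamma'}$ by $z(\gamma)\,p_\gamma$; reassembling the three external factors by concatenation turns $p_\alpha\externalsym p_\gamma\externalsym p_\beta$ into $p_{\alpha\gamma\beta}$, yielding exactly~\eqref{E:smash-power-sums}. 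The one point demanding care is the bookkeeping of multiplicities when $\lambda$ or $\mu$ has repeated parts: I must check that the split-the-parts convention used in the two coproduct expansions matches the convention implicit in the index set $\{\alpha\gamma=\lambda,\ \gamma\beta=\mu\}$ of the final sum, so that each matched splitting contributes the factor $z(\gamma)$ exactly once. This is the main, though purely combinatorial, obstacle; everything else is a direct substitution of the three standard power-sum formulas into Lemma~\ref{L:zelevinski-identity}.
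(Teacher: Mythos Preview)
Your proposal is correct and follows essentially the same route as the paper: apply Lemma~\ref{L:zelevinski-identity}, use that each $p_n$ is primitive to expand $\Delta(p_\lambda)$, invoke the orthogonality $p_\gamma\internalsym p_{\gamma'}=\delta_{\gamma,\gamma'}z(\gamma)p_\gamma$, and concatenate. Your extra justification for primitivity via $\log H(t)$ and your remark on multiplicity bookkeeping are welcome additions but not substantive departures.
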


\begin{proof}
  The coproduct in the basis of power sums is determined by
  requiring the functions $p_n$, with $n$ a non-negative integer, to
  be primitive elements: $\Delta(p_n)=1\otimes p_n + p_n\otimes 1$.
  More explicity,
  \begin{equation*}
    \Delta(p_\lambda) = \sum_{\alpha\beta=\lambda} p_\alpha\otimes p_\beta.
  \end{equation*}
  Then, formula~\eqref{E:zelevinski-identity} reads
  \begin{equation*}
    p_\lambda\smashprod p_\mu
    =
    \sum_{\substack{\alpha_1\alpha_2=\lambda\\\beta_1\beta_2=\mu}}
    p_{\alpha_1}\externalsym (p_{\alpha_2}\internalsym p_{\beta_1})\externalsym p_{\beta_2}.
  \end{equation*}
  But $p_{\alpha_2}\internalsym
  p_{\beta_1}=z(\alpha_2)\delta_{\alpha_2,\beta_1}p_{\alpha_2}$ --see \cite[Chapter I\,(7.12)]{macdonald95}--.  Since the
  external product of power sums is done by concatenating the
  partitions, we obtain the result of the theorem.
\end{proof}

As a particular case, assume that $\lambda$ and $\mu$ are partitions
of $n$.  Note that there is a term in degree $n$ only when
$\lambda=\mu$, otherwise $\gamma$ would never be the empty partition
and the degree of $p_{\alpha\gamma\beta}$ would be strictly greater
than $n$.  Therefore, the only term in degree $n$ is
\begin{equation*}
  \begin{cases}
    z(\lambda)\; p_\lambda,
    &
    \text{if $\lambda = \mu$;}
    \\
    0,
    &
    \text{otherwise;}
  \end{cases}
\end{equation*}
which is the expression of the internal product in the basis of power
sums.

On the other hand for any partitions $\lambda$ and $\mu$, when
$\gamma$ is the empty partition, we obtain the term of largest degree,
namely $p_{\alpha\beta}$, since $z(\gamma)=1$ in this case.  This
gives the external product $p_\lambda\externalsym p_\mu =
p_{\lambda\mu}$.

Note that the coefficients of Formula~\eqref{E:smash-power-sums} in
the basis of power sums are not necessarily the numbers $z(\gamma)$.
Indeed, the partition $\lambda$ may be decomposed, in general, in more
than one way as $\lambda=\alpha\gamma$, since the operation of
concatenation of partitions involves a reordering of the final
result.  For example, let $(1^n)$ be the partitions with $n$ parts
equal to 1.  Then,
\begin{equation}
  \label{E:smash-p_1-to-the-u}
  p_{(1^u)}\smashprod p_{(1^v)}
  =
  \sum_{n=\max(u,v)}^{u+v} 
  \binom{u}{n-v}\binom{v}{n-u}(u+v-n)!\,
  p_{(1^n)}.
\end{equation}
In this case, the partitions of Formula~\eqref{E:smash-power-sums} are
$\alpha=(1^{n-v})$, $\beta=(1^{n-u})$, and $\gamma=(1^{u+v-n})$.  The
number of possible decompositions of $(1^u)$ into two partitions
of length $n-v$ and $u+v-n$ is $\binom{u}{n-v}$, and the same argument
for $(1^v)$ yields the second binomial coefficient.  The remaining
factor of the coefficient is $z(\gamma)=z\bigl((1^{u+v-n})\bigr) =
(u+v-n)!$, according to Formula~\eqref{E:number-z}.

From the explicit expression $h_{(n)}(x)=\sum x_{i_1}^{a_1}x_{i_2}^{a_2}\cdots x_{i_k}^{a_k}$ --where $(a_1,\cdots,a_k)$ ranges over all possible permutations of the parts of $\alpha=(\ell_1,\cdots,\ell_k)$ for all partitions $\alpha$ of $n$--, it is clear that
$h_{(1^u)}=p_{(1^u)}$.  Hence, Formula~\eqref{E:smash-p_1-to-the-u}
can also be deduced from Theorem~\eqref{T:lambda-smash-combinatorial}.
We use this method in~\eqref{E:smash-X_1-to-the-p} for
non-commutative symmetric functions.

      \part{The non-commutative context}

            \section{The Heisenberg product of endomorphisms}
\label{S:endomorphisms}

Let $(H,m,\Delta,\iota,\varepsilon,S)$ be an arbitrary Hopf algebra,
where $m:H\otimes H\to H$ is the product, $\Delta:H\to H\otimes H$ is
the coproduct, $\iota:\field\to H$ is the unit,
$\varepsilon:H\to\field$ is the counit, and $S:H\to H$ is the
antipode.  The space $\End(H)$ of linear endomorphisms of $H$ carries
several associative products.  Let $f,g\in\End(H)$.  Composition and
convolution are respectively defined by the diagrams:
\begin{equation}
  \label{D:composition-convolution-diagram}
  \begin{gathered}
    \xymatrix@R=20pt@C=10pt{
      &  
      {H}
      \ar[rd]^{g} 
      &
      \\
      {H}
      \ar[ru]^{f} \ar[rr]_{g\internalend f} 
      & & 
      {H}
    } 
    \qquad
    \xymatrix@R=20pt@C=25pt{
      {H\otimes H}
      \ar[r]^{f\otimes g} 
      & 
      {H\otimes H}
      \ar[d]^{m}
      \\
      {H}
      \ar[u]^{\Delta}\ar[r]_{f\externalend g} 
      & 
      {H}
    }
  \end{gathered}
\end{equation}

\begin{definition}
  The {\em Heisenberg product of endomorphisms} --denoted as $f \smashprod g$, for $f,g \in \End(H)$-- is defined by the diagram:
  \begin{equation}
    \label{D:smash-diagram}
    \begin{gathered}
      \xymatrix@R=20pt@C=10pt{
        &  
        H^{\otimes 3}
        \ar[rr]^{\mathrm{cyclic}} 
        &&
        H^{\otimes 3}
        \ar[rd]^{1\otimes m}
        \\
        H^{\otimes 2}
        \ar[ru]^{\Delta\otimes 1} 
        & && & 
        H^{\otimes 2}
        \ar[d]^{1\otimes g}
        \\
        H^{\otimes 2}
        \ar[u]^{f\otimes 1}
        & && & 
        H^{\otimes 2}
        \ar[ld]^{m}
        \\
        & 
        H
        \ar[lu]^{\Delta}\ar[rr]_{{f\smashprod g}} 
        && 
        H
      }
    \end{gathered}
  \end{equation}
  where the map $\mathrm{cyclic}:H^{\otimes 3}\to H^{\otimes 3}$ is
  $x\otimes y\otimes z\mapsto y\otimes z\otimes x$. 
  The associativity of the Heisenberg product follows from the Hopf algebra axioms and its unit is the map $\iota\varepsilon$.

In explicit terms one has: 
\begin{equation}
    \label{D:smash-explicit}
(f\smashprod g)(h)
    =
    \sum 
    {f(h_1)}_2
    g\bigl( h_2 {f(h_1)}_1 \bigr).
\end{equation}
\end{definition}

We call $\End_f(H)$ the subspace of $\End(H)$ of finite rank linear homomorphisms, i.e. the image of the canonical inclusion $H^*\otimes H\mono\End(H)$. It is clear that the three above operations restrict to $\End_f(H)$ --observe that for the linear generators of 
 $\End_f(H)$ the Heisenberg product takes the following form:
 $\alpha|h \# \beta|\ell = \sum \alpha(h_1 \rightharpoonup \beta)|h_2\ell$, where $(\alpha|h)(h')=\alpha(h')h$ for $\alpha \in H^*,\,h,\,h'\in H$--.   It is also clear that if $H$ is finite dimensional $\End_f(H)=\End(H)$, is endowed with a coproduct given by the tensor products of the  coproducts in $H^*$ and $H$.  This coproduct, is compatible with the convolution product but not with the others. 
\begin{remark}
The Heisenberg product appears in the literature in different 
settings (see for example ~\cite{montgomery93}). Given a Hopf algebra $H$ and a 
$H$-module algebra $A$, the Heisenberg
product is defined as the operation on the space $A\otimes H$ given as:
\begin{equation}
  \label{E:smash-usual}
  (a\otimes h) \smashprod (b\otimes k)
  =
  \sum a(h_1\cdot b)\otimes h_2k.
\end{equation}
If $A=H^*$ and $H$ acts on $A$ by translations
then~\eqref{E:smash-usual} corresponds to~\eqref{D:smash-diagram} via
the canonical inclusion $H^*\otimes H\mono\End(H)$. Note that in the definition of the Heisenberg product in $\End(H)$ there are no restrictions about the dimensions.
\end{remark}

\subsection{The case of $K$--equivariant endomorphisms}

We need an equivariant version of the above construction. 

Assume that $K$ is another bialgebra and that 
 $H$ is a $K$--module
 bialgebra : $H$ is endowed with a left action of the algebra $K$ --if $k
 \in K$ on $h \in H$ the action is denoted as $(k,h)\mapsto k\cdot h: K \times H \rightarrow H$--. We have  that for all $k
 \in K$ and $h,\ell \in H$, 
$\Delta(k\cdot h)=  \sum k_1\cdot h_1 \otimes k_2 \cdot h_2$ and
 $k\cdot (h\ell)=\sum (k_1\cdot h)(k_2 \cdot \ell)$. 

The action of $K$ on $H$, induces a {\em right} action of $K$ on $H^*$
 via the formula: $\alpha \in H^*$, $h \in H$, $k \in K$, $(\alpha
 \leftharpoonup k)(h) = \alpha(k \cdot h)$. With respect to this
 action and if $H$ is finite dimensional, 
$H^*$ becomes a {\em right} $K$--module bialgebra. In general $H^*$ is
 only a $K$--module algebra.  

\begin{definition} Assume that $H$ is a
 $K$--module bialgebra for a certain Hopf algebra $K$. We 
define an action of $K$ on $\End(H)$ as follows: if $k
 \in K$ and $f \in {\End}(H)$, $(k\cdot f)(h)
 = \sum k_1\cdot f(S(k_2)h)$, --$S:K \rightarrow K$ denotes
 the antipode--. Explictly, the action on the generators of $\End_f(H)$ is the following: for $\alpha|h \in {\End_f}(H)$ and $k \in K$ : $k \cdot
 (\alpha|h)= \sum (\alpha \leftharpoonup Sk_2)| k_1 \cdot h$. 
\end{definition}

\begin{remark} \label{O:equivariant}\leavevmode
\begin{enumerate} 
\item The $K$-invariant elements
  for this action, i.e. the elements $f \in {\End}(H)$
such that for all $k \in K$, $k \cdot f = \varepsilon(k)
 f$, are the $K$--equivariant homomorhisms. They form a 
 vector subspace of $\End(H)$ denoted as
$\End_K(H)$. Clearly, the $K$--action is compatible with composition. 
\item In the case that $K$ is
 cocommutative, the $K$--action is also compatible with the convolution product. Then ${\End}(H)$ and ${\End_f}(H)$ are $K$--module algebras with convolution and ${\End}_{K}(H)$, ${\End}_{f,K}(H)$ are subalgebras.

\item In the case that $H$ is finite dimensional and $K$ is cocommutative, the coproduct of ${\End}(H)$ is compatible with the
action of $K$.  
Indeed, if $\alpha \in H^*$, $h \in H$, $k \in K$, we
have that $\Delta(k\cdot (\alpha|h))= \Delta ((\alpha \leftharpoonup Sk_2)|
k_1\cdot h)= \sum (\alpha \leftharpoonup Sk_2)_1|(k_1\cdot h)_1 \otimes 
(\alpha \leftharpoonup Sk_2)_2|(k_1\cdot h)_2= \sum (\alpha_1 \leftharpoonup
Sk_4)|(k_1\cdot h_1) \otimes (\alpha_2 \leftharpoonup Sk_3)|(k_2\cdot
h_2) = \sum (\alpha_1 \leftharpoonup
Sk_2)|(k_1\cdot h_1) \otimes (\alpha_2 \leftharpoonup Sk_4)|(k_3\cdot
h_2) = \sum k_1 \cdot (\alpha_1|h_1) \otimes k_2 \cdot (\alpha_2|h_2)$.  

\end{enumerate}
\end{remark} 
 




 

 \begin{lemma}\label{lemm:subalg1}\leavevmode
\begin{enumerate}\item In the situation above, if $K$ is
 cocommutative and $H$ is a $K$--module bialgebra, then $\End(H)$ endowed with the Heisenberg product is a $K$--module
 algebra and ${\End}_f(H)$ is a $K$--subalgebra.

\item Moreover, $\End_{K}(H)$ and
 $\End_{f,K}(H)$ are 
   $\smashprod$--subalgebras of $\End(H)$.
\end{enumerate} 
\end{lemma}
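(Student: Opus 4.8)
The plan is to deduce everything from a single compatibility identity, namely that the Heisenberg product is $K$--linear as a bilinear operation: for all $f,g\in\End(H)$ and $k\in K$,
\[
k\cdot(f\smashprod g)=\sum (k_1\cdot f)\smashprod(k_2\cdot g),
\]
together with the fact that the unit $\iota\varepsilon$ is $K$--invariant. Once these are in hand, statement (1) is immediate: $\End(H)$ is then a $K$--module algebra by definition, and $\End_f(H)$ is a $K$--subalgebra because it is $\smashprod$--closed (already noted after the definition of $\smashprod$), it contains the unit $\varepsilon|1_H=\iota\varepsilon$, and the explicit formula $k\cdot(\alpha|h)=\sum(\alpha\leftharpoonup Sk_2)|k_1\cdot h$ for the action on generators shows it is $K$--stable. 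Statement (2) is then purely formal: if $f,g$ are $K$--equivariant, i.e. $k\cdot f=\varepsilon(k)f$ and $k\cdot g=\varepsilon(k)g$, the displayed identity gives $k\cdot(f\smashprod g)=\sum\varepsilon(k_1)\varepsilon(k_2)(f\smashprod g)=\varepsilon(k)(f\smashprod g)$, so $\End_K(H)$ is $\smashprod$--closed; since it also contains the ($K$--invariant) unit it is a $\smashprod$--subalgebra, and $\End_{f,K}(H)=\End_f(H)\cap\End_K(H)$ is then the intersection of two subalgebras.

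For the main identity I would argue functorially rather than by brute Sweedler calculus. Regard $\End(H)=\operatorname{Hom}_\field(H,H)$, and more generally each space $\operatorname{Hom}_\field(A,B)$ of linear maps between $K$--modules, as a $K$--module via $(k\cdot\phi)(a)=\sum k_1\cdot\phi(S(k_2)a)$. With respect to this structure the following operations are $K$--linear: pre-- and post--composition with a fixed $K$--module map; the assignments $\phi\mapsto\phi\otimes\mathrm{id}$ and $\phi\mapsto\mathrm{id}\otimes\phi$; and composition $\operatorname{Hom}_\field(B,C)\otimes\operatorname{Hom}_\field(A,B)\to\operatorname{Hom}_\field(A,C)$, the last being exactly the computation of Remark~\ref{O:equivariant}(1). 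Each is a one--line check from the antipode and counit axioms (the verification for $\phi\mapsto\phi\otimes\mathrm{id}$ uses only $m(\mathrm{id}\otimes S)\Delta=\iota\varepsilon$, not cocommutativity). Now the diagram defining $\smashprod$ exhibits $f\smashprod g=\Phi_g\circ\Theta_f$, where $\Theta_f=(f\otimes 1)\Delta$ and $\Phi_g=m\,(1\otimes g)\,(1\otimes m)\,\mathrm{cyclic}\,(\Delta\otimes 1)$; the assignments $f\mapsto\Theta_f$ and $g\mapsto\Phi_g$ are $K$--linear because they are assembled from the $K$--module maps $\Delta$ and $m$ of the $K$--module bialgebra $H$, from the $\mathrm{cyclic}$ map, and from the elementary $K$--linear operations above. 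Hence $(f,g)\mapsto f\smashprod g$ is a composite of $K$--linear maps and is therefore $K$--linear, which is the desired identity.

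The one step where cocommutativity of $K$ is essential --- and the only genuine obstacle --- is the $K$--linearity of the $\mathrm{cyclic}$ map, equivalently of the flip $\tau\colon H\otimes H\to H\otimes H$. Comparing $\tau(k\cdot(x\otimes y))=\sum k_2\cdot y\otimes k_1\cdot x$ with $k\cdot\tau(x\otimes y)=\sum k_1\cdot y\otimes k_2\cdot x$ shows that $\tau$ is $K$--equivariant precisely when $\sum k_1\otimes k_2=\sum k_2\otimes k_1$, i.e. when $K$ is cocommutative; the cyclic permutation $x\otimes y\otimes z\mapsto y\otimes z\otimes x$ is a composite of two such flips and so inherits equivariance. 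Finally, the $K$--invariance of the unit is the short computation $(k\cdot\iota\varepsilon)(h)=\sum k_1\cdot\iota\bigl(\varepsilon(S(k_2)h)\bigr)=\sum\varepsilon(k_2)\varepsilon(h)\,k_1\cdot 1_H=\varepsilon(k)\varepsilon(h)1_H=\varepsilon(k)(\iota\varepsilon)(h)$, using $\varepsilon S=\varepsilon$ and $k\cdot 1_H=\varepsilon(k)1_H$.

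As an alternative to the functorial argument one can verify the main identity directly in Sweedler notation. There the left-hand side naturally involves a fourfold coproduct of $k$ while the right-hand side produces a fivefold one, and the real bookkeeping obstacle is to reconcile the two by an antipode cancellation after using cocommutativity to reorder tensor factors --- which is precisely the content packaged cleanly into the equivariance of the flip above, and the reason I prefer the functorial route.
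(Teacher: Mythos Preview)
Your argument is correct and takes a genuinely different route from the paper's. The paper proves the identity $k\cdot(f\smashprod g)=\sum(k_1\cdot f)\smashprod(k_2\cdot g)$ only on the generators $\alpha|h$ of $\End_f(H)$, by a direct Sweedler computation: it expands both $k\cdot(\alpha|h\smashprod\beta|\ell)$ and $\sum(k_1\cdot(\alpha|h))\smashprod(k_2\cdot(\beta|\ell))$ and reduces the comparison to the auxiliary identity $\sum(k_1\cdot h\rightharpoonup(\beta\leftharpoonup Sk_2))=(h\rightharpoonup\beta)\leftharpoonup Sk$; everything else, including part (2) and the statement for $\End(H)$ itself, is left to the reader. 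Your functorial argument instead works uniformly on all of $\End(H)$, factors $f\smashprod g=\Phi_g\circ\Theta_f$, and checks that each building block ($\phi\mapsto\phi\otimes\mathrm{id}$, pre/post composition with $K$--equivariant maps, composition as a bilinear map) is $K$--linear. This is cleaner, avoids the bookkeeping with rank-one generators, and has the conceptual advantage of locating the role of cocommutativity precisely at the equivariance of the flip (and hence of $\mathrm{cyclic}$). One small remark: your decomposition yields $k\cdot(f\smashprod g)=\sum(k_2\cdot f)\smashprod(k_1\cdot g)$ before invoking cocommutativity to reorder; this is harmless since cocommutativity is already in force, but it is a second (silent) use beyond the cyclic map. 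The paper's computation, by contrast, makes the cocommutativity step explicit inside the Sweedler manipulation, at the cost of being specific to rank-one endomorphisms.
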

\begin{proof} We prove only the assertion concerning the Heisenberg product in
 ${\End}_f(H)$ as is the one we use in the applications. The rest of the proof is left to the reader. 
Consider $\alpha,\beta \in H^*$, $h,\ell \in H$ and $k \in K$. 

We first compute: 
\begin{equation}\label{E:equiv1} 
\begin{aligned}k\cdot(\alpha|h \# \beta|\ell)&=\sum k\cdot (\alpha(h_1
\rightharpoonup \beta) | h_2\ell)= \\ \sum (\alpha(h_1
\rightharpoonup \beta))\leftharpoonup Sk_2  | k_1\cdot(h_2\ell)&= 
\sum (\alpha(h_1 \rightharpoonup \beta))\leftharpoonup Sk_3  |
(k_1\cdot h_2)(k_2\cdot \ell) =\\ \sum (\alpha \leftharpoonup Sk_4) 
((h_1 \rightharpoonup \beta)\leftharpoonup Sk_3 ) &| 
(k_1\cdot h_2)(k_2\cdot \ell) ,
\end{aligned}
\end{equation}

\noindent next:
\begin{equation}\label{E:equiv2}
\begin{aligned}\sum k_1\cdot(\alpha|h) \# k_2\cdot (\beta|\ell)= \sum (\alpha \leftharpoonup Sk_2)|
 k_1\cdot h)
 \# (\beta \leftharpoonup Sk_4)|k_3\cdot \ell =\\
\sum (\alpha \leftharpoonup Sk_2)((k_1\cdot h)_1
\rightharpoonup (\beta \leftharpoonup Sk_4)) | (k_1\cdot h)_2 k_3\cdot \ell = \\
\sum (\alpha \leftharpoonup Sk_3)(k_1\cdot h_1
\rightharpoonup (\beta \leftharpoonup Sk_5)) | 
(k_2\cdot h_2)(k_4\cdot \ell)=\\  
\sum (\alpha \leftharpoonup Sk_3)(k_4\cdot h_1
\rightharpoonup (\beta \leftharpoonup Sk_5)) | (k_1\cdot h_2)(k_2\cdot \ell) \,.
\end{aligned}
\end{equation}

Now, the equality of \eqref{E:equiv1} and \eqref{E:equiv2}  
can be deduced  from the following calculation: 
take $\beta \in H^*$ , $h,r \in H$ and 
$k \in K$,

$\sum (k_1\cdot h
\rightharpoonup (\beta \leftharpoonup Sk_2))(r)= \sum (\beta \leftharpoonup
Sk_2)(rk_1\cdot h)=\sum \beta((Sk_2) \cdot (r (k_1 \cdot h)))= \sum \beta((Sk_3
\cdot r) (Sk_2 k_1 \cdot h))=\sum \beta ((Sk \cdot r )h) = ((h
\rightharpoonup \beta) \leftharpoonup Sk)(r)$.

\end{proof}

\subsection{The case of endomorphisms of graded Hopf algebras}

Assume that 
$H= \bigoplus_{n \geq 0} H_n$ is a 
graded connected bialgebra, i.e. for all $n,m \in \mathbb {N}$, $H_n H_m
\subset H_{n+m}$, $\Delta(H_n) \subseteq \bigoplus_{p+q=n} H_p
 \otimes H_q$ and $H_0=\field$ --conectivity condition--. It is well known that in this situation $H$ is a Hopf algebra, and that the antipode preserves the degree.  



\begin{definition}\label{D:endomorphisms} Consider the following chain of linear subspaces of $\End(H)$, where $\End_{\text{gr}}(H)$ is the subspace of the linear endomorphisms of $H$ that
 preserve the degree:
 $\End(H) \supseteq \End_{\text{gr}}(H)
 = \prod_n \End (H_n) \supseteq \bigoplus_n \End(H_n):= \operatorname {end}(H) \supseteq \bigoplus_n \End_f(H_n):= \operatorname {end}_f(H) $.  
\end{definition}

In the case that each $H_n$ is finite dimensional, $\operatorname {end}(H)= \operatorname {end}_f(H) = \bigoplus_n (H_n^* \otimes H_n)$, that can be endowed with  with a coproduct defined as below. 

\begin{definition}\label{defi:coprod-grad}
 In the situation above, take $\alpha|h \in
H^*_n \otimes H_n$, if $\Delta(\alpha)=\sum_{p+q=n} \alpha_p \otimes \alpha_q$
and $\Delta(h)=\sum_{r+s=n} h_r \otimes h_s$, with $\alpha_p \in H^*_p$ , 
$\alpha_q \in H^*_q$, $h_r \in H_r$, $h_s \in H_s$; then
$\Delta(\alpha|h)=\sum_{a+b=n} \alpha_a|h_a \otimes \alpha_b|h_b$. 
\end{definition}

\begin{remark} \label{R:compatigr} It is clear that the composition and convolution product defined in  $\End(H)$ restricts to the chain of subspaces considered above.  Moreover, in the case that the $H_n$ are finite dimensional, $\operatorname {end}(H)$ endowed with the convolution product and the above defined coproduct is a graded bialgebra.
\end{remark}

The behaviour of the Heisenberg product in the graded case is described in
the proposition that follows, that plays a central role in our constructions.

\begin{proposition}
  \label{P:interpolation}

\noindent(1)  The Heisenberg product of $\End(H)$
  restricts to $\gradedEnd(H)$.  Moreover, if $f\in\End(H_p)$ and
  $g\in\End(H_q)$ then
  \begin{equation}
    \label{E:smash-degree}
    f\smashprod g \in \bigoplus_{n=\max(p,q)}^{p+q} \End(H_n)
  \end{equation}
  and the top and bottom components of $f\smashprod g$ are
  \begin{equation}
    \label{E:smash-endomorphisms-extremes}
    (f\smashprod g)_{p+q}
    =
    f\externalend g
    \quad
    \text{and, if $p=q$,}
    \quad
    (f\smashprod g)_p=g\internalend f.
  \end{equation}

\noindent (2) In the case that $f= \alpha|k \in \End(H_p)$ and 
 $g = \beta|\ell \in \End(H_q)$, we have: 
\[\alpha|k \# \beta|\ell= \sum_{0 \leq n
   \leq \operatorname {min}(p,q)} \alpha(k_n \rightharpoonup \beta)|k_{p-n}\ell\],
   if $\Delta(k)=\sum_{n} k_{n} \otimes k_{p-n}$ where $k_{n} \in H_n$
   and $k_{p-n} \in H_{p-n}$.  Hence the Heisenberg product of $\End(H)$
  also restricts to $\gradedEnd_f(H)$.  
\end{proposition}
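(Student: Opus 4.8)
The plan is to prove both parts by starting from the explicit formula~\eqref{D:smash-explicit} for the Heisenberg product and specializing it to the graded setting. First I would take $f \in \End(H_p)$ and $g \in \End(H_q)$ viewed as elements of $\End(H)$ (extended by zero on components of degree other than $p$, respectively $q$), and evaluate $(f \smashprod g)(h)$ for a homogeneous $h \in H_n$ using the formula $(f\smashprod g)(h) = \sum {f(h_1)}_2\, g(h_2 {f(h_1)}_1)$. The key observation is a degree count. Since $f$ is supported on $H_p$, the only surviving terms in $\Delta(h) = \sum h_1 \otimes h_2$ are those with $h_1 \in H_p$, which forces $h_2 \in H_{n-p}$ by connectivity of the coproduct. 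Likewise $g$ being supported on $H_q$ forces the argument $h_2 {f(h_1)}_1$ to live in $H_q$. Writing $\Delta(f(h_1)) = \sum {f(h_1)}_1 \otimes {f(h_1)}_2$ with ${f(h_1)}_1 \in H_c$ and ${f(h_1)}_2 \in H_{p-c}$, the constraint $\deg(h_2) + c = (n-p) + c = q$ pins down $c = q - n + p = p+q-n$, and hence ${f(h_1)}_2 \in H_{p - c} = H_{n - q}$. Multiplying ${f(h_1)}_2 \in H_{n-q}$ by $g(\cdots) \in H_q$ (since $g$ preserves degree $q$) lands the output in $H_n$. The range constraint then comes from requiring all the intervening degrees to be nonnegative: $c = p+q-n \ge 0$ gives $n \le p+q$, while $n - q \ge 0$ and $n - p \ge 0$ (the degrees of the surviving pieces) give $n \ge \max(p,q)$. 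This establishes~\eqref{E:smash-degree} and that the product restricts to $\gradedEnd(H)$.

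For the two extreme components in~\eqref{E:smash-endomorphisms-extremes}, I would simply read off the boundary cases of the above count. When $n = p+q$ we get $c = 0$, so ${f(h_1)}_1 = \varepsilon(f(h_1))\,1$ and the cyclic/comultiplication machinery collapses: the formula reduces to $\sum f(h_1)\, g(h_2)$, which is exactly the convolution $f \externalend g$ read off from the right diagram in~\eqref{D:composition-convolution-diagram}. When $p = q$ and $n = p$ we get $c = p$, forcing ${f(h_1)}_2 \in H_0 = \field$, i.e. ${f(h_1)}_2 = \varepsilon\bigl(\text{the top piece}\bigr)$, and $h_2 \in H_0 = \field$ as well, so $h_1 = h$; the formula then collapses to $g(f(h))$, which is the composition $g \internalend f$. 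These are routine specializations of the degree bookkeeping, using connectivity $H_0 = \field$ to kill the extra tensor factors.

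For part (2), I would specialize~\eqref{D:smash-explicit} to rank-one generators $f = \alpha|k$ and $g = \beta|\ell$ using the generator formula already recorded in the excerpt, namely $\alpha|h \# \beta|\ell = \sum \alpha(h_1 \rightharpoonup \beta)\,|\,h_2 \ell$. Substituting $h = k \in H_p$ and splitting $\Delta(k) = \sum_m k_m \otimes k_{p-m}$ into homogeneous components with $k_m \in H_m$, the expression becomes $\sum_m \alpha(k_m \rightharpoonup \beta)\,|\,k_{p-m}\ell$. The summation index is then cut down by the degree constraints: the argument fed to $\beta \in H_q^*$ via $k_m \rightharpoonup \beta$ must be nonzero, which (together with $\ell \in H_q$) restricts $m$ to the range $0 \le m \le \min(p,q)$, giving exactly the claimed formula after renaming $m$ to $n$. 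Since each summand $k_{p-n}\ell$ is a product of homogeneous elements and thus lands in a finite-dimensional homogeneous component, the product of two finite-rank graded endomorphisms is again a finite sum of finite-rank pieces, so $\smashprod$ restricts to $\gradedEnd_f(H)$.

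The main obstacle I anticipate is the careful degree bookkeeping in part (1): one must track \emph{three} independent splittings at once --- the coproduct of $h$, the coproduct of $f(h_1)$, and the degree constraint imposed by $g$ --- and verify that connectivity ($H_0 = \field$) together with the grading of $\Delta$ is exactly what forces the single surviving value of $c = p+q-n$ and hence the clean range $\max(p,q) \le n \le p+q$. Once that bookkeeping is set up correctly, the collapse to convolution and composition at the two extremes, and the entire computation in part (2), follow mechanically.
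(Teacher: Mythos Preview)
Your proposal is correct and follows essentially the same route as the paper: both start from the explicit formula~\eqref{D:smash-explicit}, do the same degree bookkeeping (the paper writes the constraints as $a=p$ and $b+\deg\bigl({f(h_a)}_1\bigr)=q$, which is your $c=p+q-n$ computation), deduce the range $\max(p,q)\le n\le p+q$, and then read off convolution and composition at the two extremes by collapsing the degree-$0$ tensor factors using connectivity. For part~(2) the paper is even terser than you are, simply citing the explicit formula and the bounds from part~(1); your unpacking of the summation range via the support of $\beta\in H_q^*$ is exactly what is meant.
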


\begin{proof}
\noindent (1) Let $h\in H_n$.  The coproduct of $h$ is
  \begin{equation*}
    \Delta(h)
    =
    \sum_{a+b=n} h_a \otimes h_b,
  \end{equation*}
  with $h_a\in H_a$ and $h_b\in H_b$. Using the formula~\eqref{D:smash-explicit} we obtain:
  \begin{equation}
    \label{E:smash-endomorphisms-evaluated}
    (f\smashprod g)(h)
    =
    \sum_{a+b=n} 
    {f(h_a)}_2
    g\bigl( h_b {f(h_a)}_1 \bigr).
  \end{equation}
  Suppose that $f$ and $g$ belong to $\gradedEnd(H)$.  The computation
  of the degree of every term in the sum yields
  \begin{align*}
    \deg\Bigl(
    {f(h_a)}_2
    g\bigl( h_b {f(h_a)}_1 \bigr)
    \Bigr)
    &=
    \deg\bigl( {f(h_a)}_2 \bigr) 
    + 
    \deg\Bigl( g\bigl( h_b {f(h_a)}_1 \bigr) \Bigr)
    \\
    &=
    \deg\bigl( {f(h_a)}_2 \bigr) 
    +
    \deg\bigl( h_b {f(h_a)}_1 \bigr)
    \\
    &=
    \deg\bigl( {f(h_a)}_2 \bigr) 
    +
    \deg(h_b)
    +
    \deg\bigl( {f(h_a)}_1 \bigr) 
    \\
    &=
    \deg\bigl( f(h_a)\bigr) + \deg(h_b)
    \\
    &=
    a+b = n,
  \end{align*}
  proving that $f\smashprod g$ is in $\gradedEnd(H)$.

  We can refine the previous analysis as follows.  Assume that
  $f\in\End(H_p)$ and $g\in\End(H_q)$.  Then,
  Expression~\eqref{E:smash-endomorphisms-evaluated} is zero unless
  \begin{equation}
    \label{E:non-null-term-condition}
    a=p
    \quad\text{and}\quad
    b+\deg\bigl( {f(h_a)}_1 \bigr) = q. 
  \end{equation} 
  Adding these two equations we get that $n=a+b \le p+q$.  On the
  other hand, $p=a\le a+b=n$ and $q = b + \deg\bigl( {f(h_a)}_1 \bigr)
  \le b+a = n$, hence $\max(p,q) \le n$.  This
  proves~\eqref{E:smash-degree}.

  If we set $n=p+q$ in~\eqref{E:non-null-term-condition} then we get
  $\deg\bigl({f(h_a)}_1\bigr)=0$,
  and~\eqref{E:smash-endomorphisms-evaluated} reduces to the convolution
  diagram in~\eqref{D:composition-convolution-diagram}.  If we set
  $n=p=q$, then $\deg(h_b)=\deg\bigl({f(h_a)}_2\bigr)=0$,
  and~\eqref{E:smash-endomorphisms-evaluated} reduces to
  $g\bigl(f(h)\bigr)=(g\internalend f)(h)$, which is the composition product.

\medskip
\noindent (2)  The equality $\alpha|k \# \beta|\ell= \sum_{0 \leq n \leq
   \operatorname {min}(p,q)} \alpha(k_n \rightharpoonup \beta)|k_{p-n}\ell$ where
 $\Delta(k)= \sum k_n \otimes k_{p-n}$ 
 follows immediately from the explicit formul{\ae}~\eqref{E:smash-endomorphisms-evaluated} and from the
 considerations of (1) with  the corresponding bounds for the degrees.
\end{proof}

Thus and as expected, the Heisenberg product interpolates between the composition and
convolution products.  The analogous interpolation property at all
other non-commutative levels (permutations and non-commutative
symmetric functions) is a consequence of this general result.

\begin{remark}\label{R:equivariantinterpolation}\leavevmode
\begin{enumerate}
\item Assume that that $K$ is a commutative Hopf algebra and that $H$ is graded connected $K$--module Hopf algebra as above. Assume also that the action of $K$ preserves the grading. In this situation one can consider the chain of subspaces of $\End(H)$ that follows: 
\[
\begin{array}{c c c c c c c}
\End(H) & \supseteq & \End_{\text{gr}}(H) & \supseteq & \operatorname{end}(H) & \supseteq & \operatorname{end}_f(H) \\
\bigcup\hspace{-.09cm}\bf{|}& & \bigcup\hspace{-.09cm}\bf{|}& & \bigcup\hspace{-.09cm}\bf{|} & & \bigcup\hspace{-.09cm}\bf{|}\\
\End_K(H) & \supseteq & \End_{\text{gr},K}(H) & \supseteq & \operatorname{end}_K(H) & \supseteq & \operatorname{end}_{f,K}(H). 
\end{array}
\]
\item In this context is clear that all the subspacs considered above are closed under the composition, convolution and Heisenberg products. 
In particular the following holds: if $f\in\End_K(H_p)$ and
  $g\in\End_K(H_q)$ then
\begin{equation}
    \label{E:smash-degree--equivariant}
    f\smashprod g \in \bigoplus_{n=\max(p,q)}^{p+q} \End_K(H_n).
  \end{equation} 
\end{enumerate}
\end{remark}

            \section{The Heisenberg product of Garsia-Reutenauer endomorphisms}
\label{S:descents}

In this section we define certain distinguished subspace of endomorphisms of the Hopf algebra $H$, that we call the  \emph{Garsia-Reutenauer endomorphisms}. Then we show that the Heisenberg product in $\End(H)$ (Section~\ref{S:endomorphisms}) can be restricted to this special subspace.

These endomorphisms are characterized in terms of their action on products
of primitive elements of $H$. 

The motivation for the definition is that in the case that $H$ is the tensor algebra of a vector space, an important result of Garsia and Reutenauer --see \cite{garsia89}-- relates this subspace with the space 
of non-commutative symmetric functions via  Schur-Weyl duality (Lemma~\ref{L:schur-weyl-duality} and Theorem~\ref{T:sigma-GR}).

\begin{definition}
  \label{D:descents-hopf}
  Let $H$ be an arbitrary Hopf algebra.  If $h_1,\dots,h_n
  \in H$, define 
  \begin{equation*}
    G(h_1,\dots,h_n)
    =
    \linearspan{h_{\sigma(1)}\cdots h_{\sigma (n)} \mid
      \sigma \in \S_n},
  \end{equation*}
  or in other words, $G(h_1,\dots,h_n)$ is the subspace generated by the products
  of the form $h_{\sigma(1)} \cdots h_{\sigma (n)}$ for $\sigma\in\S_n$.
\end{definition}

For later use, we record the explicit expressions of the comultiplication in elements that are products of primitives $h_1\cdots ḥ_n$ . 

We consider the set of $(p,q)$-shuffles --that is denoted as $\mathrm{Sh}(p,q)$--.  A {\em $(p,q)$-shuffle} is a permutation $\xi\in S_{p+q}$ such that
\[\xi(1)<\cdots<\xi(p) \text{ \ and \ } \xi(p+1)<\cdots<\xi(p+q)\,.\]
\medskip 
 
The comultiplication of  $h_1\cdots ḥ_n$ is given as:
\begin{equation}\label{E:explicit-coproduct-primitives}
\Delta(h_1\cdots h_n)=\sum_{p+q=n}
\sum_{\xi\in\mathrm{Sh}(p,q)} h_{\xi(1)}\cdots 
h_{\xi(p)} \otimes  h_{\xi(p+1)}\cdots
h_{\xi(p+q)} \,.
\end{equation}
The following lemma lists some of the basic properties of the
subspaces $G(h_1,\dots,h_n)$.

\begin{lemma}
  \label{L:coproduct-primitives}
  For any $h_1,\dots,h_n \in H$ we have:
  \begin{enumerate}[(i)]
  \item If $a\in G(h_1,\dots,h_k)$ and $b\in G(h_{k+1},\dots,h_n)$,
    then $ab\in G(h_1,\dots h_n)$.
  \item If $a\in G(h_1,\dots,h_n)$ and $h_1,\dots,h_n\in\prim(H)$,
    then
    \begin{equation*}
      \Delta(a)
      =
      \sum_{\substack{k+\ell = n\\\xi\in\Sh(k,\ell)}}
      a^{(1)}_\xi\otimes a^{(2)}_\xi,
    \end{equation*}
    where $a^{(1)}_\xi\in G(h_{\xi(1)},\dots,h_{\xi(k)})$ and
    $a^{(2)}_\xi\in G(h_{\xi(k+1)},\dots,h_{\xi(n)})$.
  \end{enumerate}
\end{lemma}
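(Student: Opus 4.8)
The plan is to prove the two assertions of Lemma~\ref{L:coproduct-primitives} separately, with part (i) serving as a warm-up and part (ii) being the substantive claim. For part (i), I would argue directly from Definition~\ref{D:descents-hopf}. If $a \in G(h_1,\dots,h_k)$ then $a$ is a linear combination of products $h_{\sigma(1)}\cdots h_{\sigma(k)}$ with $\sigma\in\S_k$, and similarly $b$ is a linear combination of $h_{\tau(k+1)}\cdots h_{\tau(n)}$ with $\tau$ a permutation of $\{k+1,\dots,n\}$. By bilinearity of the product $m$, the element $ab$ is a linear combination of products $h_{\sigma(1)}\cdots h_{\sigma(k)} h_{\tau(k+1)}\cdots h_{\tau(n)}$, and each such product is a rearrangement $h_{\rho(1)}\cdots h_{\rho(n)}$ for the permutation $\rho\in\S_n$ obtained by concatenating $\sigma$ and $\tau$. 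Hence $ab\in G(h_1,\dots,h_n)$, which is exactly the claim.

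For part (ii), the key is that the coproduct $\Delta$ is an algebra homomorphism, so it suffices to prove the formula on a spanning set, namely the products $a=h_{\sigma(1)}\cdots h_{\sigma(n)}$ for $\sigma\in\S_n$. First I would establish the formula on the ordered product $h_1\cdots h_n$: since each $h_i$ is primitive, $\Delta(h_i)=h_i\otimes 1 + 1\otimes h_i$, and expanding $\Delta(h_1\cdots h_n)=\prod_i \Delta(h_i)$ and collecting terms by which factors land in the left versus right tensorand produces precisely the shuffle sum~\eqref{E:explicit-coproduct-primitives}; this is the standard computation already recorded in the excerpt, so I would simply invoke it. The point is then to read off from that formula that, for each $(k,\ell)$ with $k+\ell=n$ and each $\xi\in\Sh(k,\ell)$, the left tensorand $h_{\xi(1)}\cdots h_{\xi(k)}$ lies in $G(h_{\xi(1)},\dots,h_{\xi(k)})$ and the right tensorand lies in $G(h_{\xi(k+1)},\dots,h_{\xi(n)})$, which is trivially true since each is a single ordered product of the indicated generators. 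Setting $a^{(1)}_\xi=h_{\xi(1)}\cdots h_{\xi(k)}$ and $a^{(2)}_\xi=h_{\xi(k+1)}\cdots h_{\xi(n)}$ gives the claimed form for $a=h_1\cdots h_n$.

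The remaining work is to pass from the single ordered product to an arbitrary generator $h_{\sigma(1)}\cdots h_{\sigma(n)}$, and then to a general $a\in G(h_1,\dots,h_n)$ by linearity. For a generator indexed by $\sigma$, I would apply the formula just proved with the list $(h_1,\dots,h_n)$ replaced by the permuted list $(h_{\sigma(1)},\dots,h_{\sigma(n)})$, whose entries are again primitive. This yields a shuffle decomposition whose tensorands are ordered products of the permuted generators; the only subtlety is bookkeeping, namely checking that the subspace $G(h_{\xi(1)},\dots,h_{\xi(k)})$ appearing in the statement is indeed the correct home for each piece once one reindexes through $\sigma$. Because $G$ depends only on the \emph{set} of generators used in a given block and not on their internal order, each tensorand still lands in a subspace of the prescribed form $G$ of a sub-list of the $h_i$, so summing over $\sigma$ and over the shuffles preserves the stated structure.

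The main obstacle I anticipate is purely notational rather than conceptual: organizing the double indexing (over $\sigma\in\S_n$ and over shuffles $\xi$) so that the final expression is genuinely of the form $\sum_{k+\ell=n,\,\xi\in\Sh(k,\ell)} a^{(1)}_\xi\otimes a^{(2)}_\xi$ with the membership conditions $a^{(1)}_\xi\in G(h_{\xi(1)},\dots,h_{\xi(k)})$ holding on the nose. The cleanest way to sidestep this is to observe that the statement only asserts \emph{existence} of such a decomposition with the two tensorands lying in the appropriate $G$-subspaces, not a canonical one; so it is enough to prove it for the ordered product and then invoke that $\Delta$ is multiplicative together with part (i) to absorb products of shuffle-components back into $G$-subspaces of the required type. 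That reduces the whole of part (ii) to the already-known coproduct formula~\eqref{E:explicit-coproduct-primitives} plus the closure property (i).
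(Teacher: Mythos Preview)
Your approach is correct, and in fact the paper does not supply a proof of this lemma at all: it is stated and immediately followed by Definition~\ref{D:Garsia-Reutenauer}, so the authors treat both assertions as straightforward consequences of Definition~\ref{D:descents-hopf} and of the explicit coproduct formula~\eqref{E:explicit-coproduct-primitives} recorded just before the lemma. Your argument for (i) via concatenation of permutations and for (ii) via~\eqref{E:explicit-coproduct-primitives} on generators plus linearity is exactly the intended verification, and your closing remark---that the displayed sum is to be read Sweedler-style as asserting $\Delta(a)\in\sum_{\xi} G(h_{\xi(1)},\dots,h_{\xi(k)})\otimes G(h_{\xi(k+1)},\dots,h_{\xi(n)})$ rather than as a literal sum of one simple tensor per shuffle---is the correct way to resolve the bookkeeping, and is consistent with how the lemma is invoked in the proof of Theorem~\ref{T:sigma-hopf}.
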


\begin{definition}\label{D:Garsia-Reutenauer}
Let $H$ be an arbitrary Hopf algebra. The space of {\em Garsia-Reutenauer endomorphisms\/} of $H$ --denoted as $\Sigma(H)$-- is:
  \begin{equation*}
    \Sigma(H)
    =
    \bigl\{
    f\in\End(H) \bigm|
    f\bigl( G(h_1,\dots,h_n) \bigr) \subseteq G(h_1,\dots,h_n)
    \text{\ for all $h_1,\dots,h_n\in\prim(H)$}
    \bigr\}.
  \end{equation*}
\end{definition}

The subspace considered above, plays for $\End(H)$ the same role that the subspace of descents plays for the algebra of permutations. 
\begin{theorem}
  \label{T:sigma-hopf}
  If $H$ is a Hopf algebra, the space $\Sigma(H)$ of Garsia-Reutenauer endomorphisms
  is a subalgebra of $\End(H)$ with respect to the Heisenberg product.
\end{theorem}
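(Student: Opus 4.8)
The plan is to show that $\Sigma(H)$ is closed under the Heisenberg product by working directly with the explicit evaluation formula~\eqref{D:smash-explicit} and exploiting the defining property of $\Sigma(H)$ together with the behaviour of the coproduct on products of primitives recorded in Lemma~\ref{L:coproduct-primitives}. Fix $f,g\in\Sigma(H)$ and primitive elements $h_1,\dots,h_n\in\prim(H)$; it suffices to check that $(f\smashprod g)$ maps a typical generator $a=h_{\sigma(1)}\cdots h_{\sigma(n)}$ of $G(h_1,\dots,h_n)$ back into $G(h_1,\dots,h_n)$. Since the subspace $G(h_1,\dots,h_n)$ is symmetric in its arguments, I may relabel and assume $a=h_1\cdots h_n$ without loss of generality.

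The key computation runs as follows. First I would apply the coproduct to $a$, using part~(ii) of Lemma~\ref{L:coproduct-primitives}, writing
\[
\Delta(a)=\sum_{\substack{k+\ell=n\\ \xi\in\Sh(k,\ell)}} a^{(1)}_\xi\otimes a^{(2)}_\xi,
\]
where $a^{(1)}_\xi\in G(h_{\xi(1)},\dots,h_{\xi(k)})$ and $a^{(2)}_\xi\in G(h_{\xi(k+1)},\dots,h_{\xi(n)})$. Substituting into~\eqref{D:smash-explicit} gives
\[
(f\smashprod g)(a)=\sum {f(a^{(1)})}_2\, g\bigl(a^{(2)}\,{f(a^{(1)})}_1\bigr),
\]
where I abbreviate $a^{(1)}=a^{(1)}_\xi$, $a^{(2)}=a^{(2)}_\xi$. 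The crucial point is that, because $f\in\Sigma(H)$, the image $f(a^{(1)})$ again lies in $G(h_{\xi(1)},\dots,h_{\xi(k)})$. I then apply the coproduct once more to $f(a^{(1)})$; by Lemma~\ref{L:coproduct-primitives}(ii) each tensorand ${f(a^{(1)})}_1$ and ${f(a^{(1)})}_2$ is again a product of some subset of the primitives $h_{\xi(1)},\dots,h_{\xi(k)}$, so it sits in the corresponding $G$-space. Now part~(i) of the lemma assembles the pieces: the factor $a^{(2)}\,{f(a^{(1)})}_1$ is a product of the primitives indexed by $\{\xi(k+1),\dots,\xi(n)\}$ together with those appearing in ${f(a^{(1)})}_1$, hence lies in a single $G$-space on those indices; applying $g\in\Sigma(H)$ keeps it there; and finally multiplying by the remaining factor ${f(a^{(1)})}_2$ and invoking Lemma~\ref{L:coproduct-primitives}(i) lands the whole summand back in $G(h_1,\dots,h_n)$.

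The main obstacle is bookkeeping of the index sets: one must verify that the three groups of primitives used in the two successive applications of the coproduct --- the primitives feeding $a^{(2)}$, those feeding ${f(a^{(1)})}_1$, and those feeding ${f(a^{(1)})}_2$ --- form a disjoint partition of $\{1,\dots,n\}$, so that concatenating their $G$-spaces via Lemma~\ref{L:coproduct-primitives}(i) is legitimate and reproduces precisely $G(h_1,\dots,h_n)$. The shuffle indexing in~\eqref{E:explicit-coproduct-primitives} makes this transparent: at each coproduct step the primitives are distributed, never duplicated, across the two tensor factors, and since all the $h_i$ are primitive no cross-terms appear. Once the partition is checked, closure is immediate, and associativity of $\smashprod$ on $\Sigma(H)$ is inherited from its associativity on $\End(H)$ established in Section~\ref{S:endomorphisms}. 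I would organize the argument so that the two lemma parts are invoked in the order (ii), then (ii) again, then (i), making the disjointness of indices the explicit hinge of the proof.
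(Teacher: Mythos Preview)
Your proposal is correct and follows essentially the same route as the paper's proof: evaluate $(f\smashprod g)$ on a product of primitives via~\eqref{D:smash-explicit}, use Lemma~\ref{L:coproduct-primitives}(ii) twice (once on $a$ and once on $f(a^{(1)})$), then invoke part~(i) to reassemble everything into $G(h_1,\dots,h_n)$. The only small omission is that the paper also checks that the unit $\iota\varepsilon$ of the Heisenberg product lies in $\Sigma(H)$ (since $\varepsilon$ vanishes on primitives), which you should add for completeness.
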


\begin{proof}
  Given a primitive element $h$, we have $\iota\varepsilon(h)=0$, hence
  the unit of the Heisenberg product is in $\Sigma(H)$.

  Take two endomorphisms $f$ and $g$ in $\Sigma(H)$, and let
  $h_1,\dots,h_n\in \prim(H)$.  Then, we have by
  definition --see \eqref{E:explicit-coproduct-primitives}, ~\eqref{D:smash-diagram} and~~\eqref{D:smash-explicit}--: 
  \begin{equation}
    \label{E:smash-endomorphisms-in-primitives}
    (f\smashprod g)(h_1\cdots h_n)
    =
    \sum_{\substack{k+\ell=n\\\xi\in\Sh(k,\ell)}}
    \bigl( f(h_{\xi(1)}\cdots h_{\xi(k)}) \bigr)_2
    g\Bigl(
    h_{\xi(k+1)}\cdots h_{\xi(n)}
    \bigl( f(h_{\xi(1)}\cdots h_{\xi(k)}) \bigr)_1
    \Bigr).
  \end{equation}
  As $f(h_{\xi(1)}\cdots h_{\xi(k)})\in G(h_{\xi(1)}\cdots
  h_{\xi(k)})$, it follows from Lemma~\ref{L:coproduct-primitives}
  that
  \begin{equation*}
    \Delta\bigl( f(h_{\xi(1)}\cdots h_{\xi(k)}) \bigr)
    =
    \sum_{\substack{r+s=k\\\eta\in\Sh(r,s)}}
    a^{(1)}_\eta\otimes a^{(2)}_\eta,
  \end{equation*}
  with $a^{(1)}_\eta\in G(h_{\xi\eta(1)},\dots,h_{\xi\eta(r)})$ and
  $a^{(2)}_\eta\in G(h_{\xi\eta(r+1)},\dots,h_{\xi\eta(k)})$.  Hence,
  we rewrite~\eqref{E:smash-endomorphisms-in-primitives} as
  \begin{equation*}
    (f\smashprod g)(h_1\cdots h_n)
    =
    \sum_{\substack{k+\ell=n\\\xi\in\Sh(k,\ell)}}
    a^{(2)}_\eta\; g( h_{\xi(k+1)}\cdots h_{\xi(n)} \,a^{(1)}_\eta).
  \end{equation*}
  But the argument of $g$ belongs to $G(h_{\xi(k+1)},\dots,
  h_{\xi(n)},h_{\xi\eta(1)},\dots,h_{\xi\eta(r)})$.  Using that
  $g\in\Sigma(H)$ and using part $(1)$ of
  Lemma~\ref{L:coproduct-primitives} we obtain that
  \begin{multline*}
    (f\smashprod g)(h_1\cdots h_n) 
    \in
    G(h_{\xi\eta(r+1)},\dots,h_{\xi\eta(k)},h_{\xi(k+1)},\dots,
    h_{\xi(n)},h_{\xi\eta(1)},\dots,h_{\xi\eta(r)})
    \\
    \subseteq G(h_1,\dots, h_n),
  \end{multline*}
  proving that $f\smashprod g\in \Sigma(H)$.
\end{proof}
\begin{remark} It is easy to show that $\Sigma(H)$ is also a subalgebra of $\End(H)$ with respect to the composition and convolution products. 
\end{remark}

In the situation that $H$ is a graded connected bialgebra, we can produce an homogeneous and equivariant version of the above results. 
 
We define the following chain of subspaces of $\Sigma(H)$ :

\[\Sigma_{\text{gr}}(H) : = \Sigma(H) \cap \End_{\text{gr}}(H) \supseteq \sigma(H) := \Sigma(H) \cap \operatorname{end}(H),\]

\noindent
and in the case that $K$ is a commutative bialgebra that acts on 
$H$, by homogenous bialgebra endomorphisms --see Section
\ref{S:endomorphisms}-- we define:
\[\Sigma(H) \cap \End_K(H) = \Sigma_{K}(H)  \supseteq \Sigma_{\text{gr},K}(H)  \supseteq 
\sigma_K(H).\]
\noindent

As all the objects described above are defined as intersections, it is clear that they are closed under  composition, convolution and Heisenberg product.


            \section{The Heisenberg product of permutations}
\label{S:permutations}

In order to translate the \smash product from endomorphisms of Hopf
algebras to permutations we specialize the constructions of
Section~\ref{S:endomorphisms} and apply the methods related to the Schur-Weyl duality theorem --see \cite{howe95}--.

Let
\begin{equation*}
T(V)=\bigoplus_{n\geq 0} V^{\otimes n}
\end{equation*}
be the tensor algebra of a finite dimensional vector space $V$.  It is a graded connected
Hopf algebra with product defined by concatenation and with coproduct
uniquely determined by the condition that the algebra generators --the elements $v \in V$-- are primitive:
\begin{equation}
  \label{D:coproduct-tensorial-algebra}
\Delta:  
  v
  \mapsto 
  1\otimes v+v\otimes 1
  \quad \text{for $v\in V$}.
\end{equation}
As $v_1\otimes\dots\otimes v_n
= v_1\cdots v_n$, we omit the tensors when writing elements
of $T(V)$.




The general linear group $\GL(V)$ acts on $V$ and hence on each
$V^{\otimes n}$ diagonally.  Schur-Weyl duality --as presented for example in \cite{Etingov13} or \cite{howe95}-- guarantees that the only endomorphisms of $T(V)$
which commute with the action of $\GL(V)$ are (linear combinations of)
permutations. 

Let
\begin{equation*}
  \Sinfty = \bigoplus_{n\geq 0}\field \S_n
\end{equation*}
be the direct sum of all symmetric group algebras.  The product in
$\Sinfty$ is defined on permutations as the usual composition --denoted by 
$\sigma\internalperm\tau$ or $ \sigma \tau$--  when
$\sigma$ and $\tau$ belong to the same homogeneous component of
$\Sinfty$, and is 0 in any other case.  The identity in $\S_n$ is
denoted by $\Id_n$.

\begin{lemma}[Schur-Weyl duality]
  \label{L:schur-weyl-duality} 
  In the notations above, let $\Psi$ be the map
  \begin{equation*}
    \Psi:\Sinfty 
    \to
    \gradedEnd_{\GL(V)}\bigl(T(V)\bigr),
  \end{equation*}
  defined by sending $\sigma\in\S_n$ to the endomorphism
  $\Psi(\sigma)$ of $T(V)$, which in degree $n$ is given by the right
  action of~$\sigma$ on $V^{\otimes n}$:
  \begin{equation*}
    v_1\cdots v_n
    \xlongmapsto{\Psi(\sigma)}
    v_{\sigma(1)} \cdots v_{\sigma(n)}
  \end{equation*}
  and is $0$ in the other homogeneous components.  Then, $\Psi$ is an homogeneous isomorphism of vector spaces.
\end{lemma}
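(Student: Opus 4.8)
The plan is to reduce everything to a single degree. Since $\Psi$ is homogeneous by construction -- each $\Psi(\sigma)$ with $\sigma\in\S_n$ is supported in degree $n$ -- it suffices to show that, for every $n$, the restriction $\Psi_n\colon\field\S_n\to\End_{\GL(V)}(V^{\otimes n})$ is a linear isomorphism; assembling these over all $n$ then yields the asserted homogeneous isomorphism $\Sinfty\to\gradedEnd_{\GL(V)}(T(V))$.

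First I would verify that $\Psi$ is well defined, i.e. that each $\Psi(\sigma)$ genuinely commutes with $\GL(V)$. This is the easy half of Schur--Weyl duality and is a direct computation: for $g\in\GL(V)$ the diagonal action sends $v_1\cdots v_n$ to $(gv_1)\cdots(gv_n)$, and permuting tensor factors commutes with applying $g$ to every factor, so $\Psi(\sigma)$ is $\GL(V)$-equivariant; it is clearly degree-preserving, and $\Psi$ is linear. The surjectivity of each $\Psi_n$ is then precisely the double-commutant (first fundamental theorem) form of Schur--Weyl duality already quoted in the text: every endomorphism of $V^{\otimes n}$ commuting with the diagonal $\GL(V)$-action is a linear combination of permutation operators. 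I would simply invoke this.

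It remains to prove that $\Psi_n$ is injective, which is the step carrying the real content here. I would argue by linear independence of the permutation operators: fixing a basis $e_1,\dots,e_d$ of $V$ and evaluating a hypothetical relation $\sum_{\sigma\in\S_n}c_\sigma\Psi(\sigma)$ on $e_1\otimes\cdots\otimes e_n$ gives $\sum_{\sigma}c_\sigma\,e_{\sigma(1)}\otimes\cdots\otimes e_{\sigma(n)}$; when $e_1,\dots,e_n$ are pairwise distinct, these are distinct standard basis tensors of $V^{\otimes n}$ (the tuple $(\sigma(1),\dots,\sigma(n))$ recovers $\sigma$), hence linearly independent, so every $c_\sigma$ vanishes.

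The main obstacle is exactly the hypothesis hidden in that last step: the vectors $e_1,\dots,e_n$ can be chosen distinct only when $\dim V\ge n$, and for a fixed finite-dimensional $V$ injectivity genuinely fails once $n>\dim V$ (this is the content of the second fundamental theorem of Schur--Weyl duality). I would therefore make explicit that the isomorphism holds in degree $n$ provided $\dim V\ge n$, so that to obtain a single global isomorphism $\Psi$ on all of $\Sinfty$ one must allow $\dim V$ to be unbounded; otherwise the statement is to be read degree by degree.
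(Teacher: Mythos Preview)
Your argument is correct and complete, including the caveat on $\dim V\ge n$ that you rightly make explicit. The paper, however, does not give its own proof of this lemma: it simply invokes Schur--Weyl duality as a known result, citing \cite{Etingov13} and \cite{howe95}, and moves on. So there is nothing to compare against beyond noting that your write-up supplies the standard proof that the paper chose to omit; in particular, your observation that injectivity in degree $n$ requires $\dim V\ge n$ (so that the global isomorphism is to be read degree by degree, or with $V$ of sufficiently large dimension) is a genuine clarification that the paper leaves implicit.
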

\begin{definition}\label{D:heisenberg_per} The \emph{Heisenberg product of permutations} $\smashprod: \Sinfty \times \Sinfty \rightarrow \Sinfty$ is defined by the commutativity of the diagram below:

\begin{equation*}
  \xymatrix@R=20pt@C=20pt{
    \Sinfty \times \Sinfty 
    \ar[rrr]^-{\Psi \times \Psi}\ar[d]_{\smashprod}
    &&&
    \gradedEnd_{\operatorname{GL}(V)}\big(T(V)\big)\times \gradedEnd_{\operatorname{GL}(V)}\big(T(V)\big)
    \ar[d]^{\smashprod}
    \\
    \Sinfty 
    \ar[rrr]_-{\Psi}
    &&&
   \gradedEnd_{\operatorname{GL}(V)}\big(T(V)\big)}
\end{equation*}
\end{definition}

Compare the above definition with the results by Malvenuto and Reutenauer in \cite{malvenuto95} where the authors deal with the convolution product in $\Sinfty$. The considerations of Section \ref{S:endomorphisms}, guarantee that the same methods can be applied in the situation treated in Definition \ref{D:heisenberg_per} for the Heisenberg product. 


This method, presented in \cite{malvenuto95} and used above, is important because it could be applied to
other dualities than Schur-Weyl, i.e. to centralizer algebras of other groups (or Hopf algebras) acting on a tensor algebra. 

It also can be applied to other products of
endomorphisms, a remarkable case being that of the {\em Drinfel'd} product,
which is studied in~\cite{moreira08}.

Next we exhibit an explicit formula for the \smash product of two
permutations, that at the lowest and highest degree, yield the usual formul\ae\/ for the composition and the Malvenuto--Reutenauer products respectively --see the table appearing in Figure \ref{F:table-of-names}--. 

We establish the following notation for the $(p,q)$ shuffle of maximal length:
\begin{equation*}
  \beta_{p,q} = \left(
  \begin{matrix}
    1 & 2 & \cdots & p & p+1 & p+2 & \cdots & p+q \\
    q+1 & q+2 & \cdots & q+p & 1 & 2 & \cdots & q
  \end{matrix} \right).
\end{equation*}

Notice in particular that $\beta_{p,q}=\beta_{q,p}^{-1}$.

\medskip


\begin{theorem}
\begin{enumerate}
 \item  Let $\sigma\in\S_p$ and $\tau\in\S_q$. Then, the \smash product in
  $\Sinfty$ can be expressed as
  \begin{equation}
    \label{T:smash-permutations}
    \sigma\smashprod \tau
    =
    \sum_{n=\max(p,q)}^{p+q}
    \sum_{\substack{\xi\in\Sh(p,n-p)\\ \eta\in\Sh(p+q-n,n-q)}}
    \xi
    \bigl( (\sigma\eta) \times \Id_{n-p} \bigr)
    \beta_{2n-p-q, p+q-n}
    (\Id_{n-q}\times\tau).
  \end{equation}
\item

When $n=p+q$:
\begin{equation*}
  (\sigma \smashprod \tau)_n = \sigma\externalperm\tau
  =
  \sum_{\xi\in\Sh(p,q)}\xi(\sigma\times\tau),
\end{equation*} 
\noindent
where $\sigma \times \tau \in S_{p+q}$ via the standard inclusion:   \[(\sigma\times \tau)(i)=\begin{cases} \sigma(i)

 & \text{ if }1\leq i\leq p\,,\\
 p+\tau(i-p) & \text{ if }p+1\leq i\leq p+q\,. \end{cases}\]

\noindent
When $n=p=q$: 
\begin{equation*}
  (\sigma \smashprod \tau)_n = \sigma \tau.
\end{equation*}   

\end{enumerate}
\end{theorem}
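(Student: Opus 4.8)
The plan is to unwind Definition~\ref{D:heisenberg_per}: the Heisenberg product on $\Sinfty$ was defined precisely so that the Schur--Weyl isomorphism $\Psi$ of Lemma~\ref{L:schur-weyl-duality} is a homomorphism, $\Psi(\sigma\smashprod\tau)=\Psi(\sigma)\smashprod\Psi(\tau)$. Hence it suffices to compute the endomorphism $\Psi(\sigma)\smashprod\Psi(\tau)$ of $T(V)$ explicitly, apply it to a generic word $v_1\cdots v_n$ (with the $v_i$ linearly independent, so that distinct words are independent and each summand determines a unique permutation), and read off via $\Psi^{-1}$ the permutation realizing each resulting word. By Proposition~\ref{P:interpolation}, $\Psi(\sigma)\smashprod\Psi(\tau)$ preserves degree and is supported in degrees $\max(p,q)\le n\le p+q$, so I only need to treat these components and may take the input word to have length exactly $n$, recovering the $\S_n$-component of $\sigma\smashprod\tau$.

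First I would substitute $f=\Psi(\sigma)$, $g=\Psi(\tau)$ and $h=v_1\cdots v_n$ into the explicit formula~\eqref{D:smash-explicit}, $(f\smashprod g)(h)=\sum f(h_1)_2\,g\bigl(h_2\,f(h_1)_1\bigr)$. Expanding $\Delta(h)$ by the shuffle formula~\eqref{E:explicit-coproduct-primitives} writes $h_1\otimes h_2$ as a sum over shuffles, and $f(h_1)=\Psi(\sigma)(h_1)$ vanishes unless $h_1$ has length $p$; this forces the first shuffle into $\Sh(p,n-p)$, producing the index $\xi$ and $f(h_1)=v_{\xi\sigma(1)}\cdots v_{\xi\sigma(p)}$. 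Applying $\Delta$ once more and demanding that the argument $h_2\,f(h_1)_1$ of $g=\Psi(\tau)$ have length $q$ splits the length-$p$ word into blocks of sizes $p+q-n$ and $n-q$, i.e. a shuffle in $\Sh(p+q-n,n-q)$, which is the index $\eta$. Applying $\Psi(\tau)$ and concatenating with $f(h_1)_2$ gives a word of length $n$, and the two summation ranges match exactly those of~\eqref{T:smash-permutations}.

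The core of the argument is then the bookkeeping identifying the resulting word with the action of $\xi\bigl((\sigma\eta)\times\Id_{n-p}\bigr)\beta_{2n-p-q,\,p+q-n}(\Id_{n-q}\times\tau)$ on $v_1\cdots v_n$, which I would verify by tracking the image of an arbitrary position through the composite read right to left. The factor $\Id_{n-q}\times\tau$ applies $\tau$ inside the block on which $g$ acts; the maximal shuffle $\beta_{2n-p-q,\,p+q-n}$ realizes the block rotation induced by the cyclic map $x\otimes y\otimes z\mapsto y\otimes z\otimes x$ of diagram~\eqref{D:smash-diagram} (this is exactly why a \emph{maximal} shuffle, i.e. a rotation of blocks, appears rather than an arbitrary one); $(\sigma\eta)\times\Id_{n-p}$ encodes the action of $\sigma$ together with the internal split $\eta$ of $f(h_1)$; and the outer shuffle $\xi$ reinjects the chosen positions back into $[n]$. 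The main obstacle is precisely this index management: because $\Psi$ uses a \emph{right} action, composition of permutations is contravariant, and one must check that the canonical increasing labelings concealed in the two coproduct shuffles compose so that the cyclic step is indeed the single permutation $\beta_{2n-p-q,\,p+q-n}$.

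Finally, part (2) falls out by specializing~\eqref{T:smash-permutations}. When $n=p+q$ one has $p+q-n=0$, so $\eta$ is the empty shuffle, $\beta_{2n-p-q,0}=\Id_{p+q}$, $\Sh(p,n-p)=\Sh(p,q)$, and $(\sigma\eta)\times\Id_{n-p}=\sigma\times\Id_q$; since $(\sigma\times\Id_q)(\Id_p\times\tau)=\sigma\times\tau$, the top component collapses to $\sum_{\xi\in\Sh(p,q)}\xi(\sigma\times\tau)=\sigma\externalperm\tau$. When $n=p=q$ the shuffles $\xi,\eta$ and the rotation $\beta$ all trivialize, leaving $\sigma\tau$, the composition product. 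Both limits agree with the abstract extreme components $(f\smashprod g)_{p+q}=f\externalend g$ and $(f\smashprod g)_p=g\internalend f$ of Proposition~\ref{P:interpolation}, giving an independent consistency check.
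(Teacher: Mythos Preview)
Your proposal is correct and follows essentially the same approach as the paper: compute $\Psi(\sigma)\smashprod\Psi(\tau)$ on a generic word $v_1\cdots v_n$ using the explicit formula~\eqref{D:smash-explicit} and the shuffle coproduct~\eqref{E:explicit-coproduct-primitives}, force the degree constraints to obtain $\xi\in\Sh(p,n-p)$ and $\eta\in\Sh(p+q-n,n-q)$, and then identify the resulting word with the action of the displayed permutation; part~(2) is then the straightforward specialization to the extreme degrees. Your added remarks---taking the $v_i$ linearly independent so as to read off the permutation unambiguously, and explaining why the cyclic map in~\eqref{D:smash-diagram} manifests as the maximal shuffle $\beta_{2n-p-q,\,p+q-n}$---are helpful clarifications of exactly the bookkeeping the paper carries out line by line.
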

\begin{proof} \begin{enumerate}
\item 
  Using~\eqref{E:explicit-coproduct-primitives} and \eqref{E:smash-endomorphisms-in-primitives} for the endomorphisms $\Psi(\sigma)$ and
  $\Psi(\tau)$ induced by the permutations $\sigma\in\S_p$ and
  $\tau\in\S_q$, respectively, we obtain:
  \begin{multline*}
    \bigl(\Psi(\sigma) \smashprod \Psi(\tau) \bigr) 
    (v_1\cdots v_n)
    \\
    =
    \sum_{\substack{r+s=n\\\xi\in\Sh(r,s)}}
    \bigl( \Psi(\sigma)(v_{\xi(1)}\cdots v_{\xi(r)}) \bigr)_2
    \;\Psi(\tau)
    \Bigl(
    v_{\xi_{r+1}}\cdots v_{\xi_n}\,
    \bigl( \Psi(\sigma)( v_{\xi(1)}\cdots v_{\xi(r)} ) \bigr)_1 
    \Bigr).
  \end{multline*}
  The only non-zero terms occur when $r=p$, hence
  \begin{multline*}
    \bigl(\Psi(\sigma) \smashprod \Psi(\tau)\bigr)
    (v_1\cdots v_n)
    \\
    \begin{aligned}
      &=
      \sum_{\xi\in\Sh(p,n-p)}
      \bigl( \Psi(\sigma)(v_{\xi(1)}\cdots v_{\xi(p)}) \bigr)_2
      \;\Psi(\tau)\Bigl(
      v_{\xi(p+1)}\cdots v_{\xi(n)}\,
      \bigl( \Psi(\sigma)( v_{\xi(1)}\cdots v_{\xi(p)} ) \bigr)_1
      \Bigr)
      \\
      &=
      \sum_{\substack{\xi\in\Sh(p,n-p)\\ u+v=p \\ \eta\in\Sh(u,v)}}
      v_{\xi\sigma\eta(u+1)} \cdots v_{\xi\sigma\eta(p)}
      \Psi(\tau)(
      v_{\xi(p+1)}\cdots v_{\xi(n)}
      v_{\xi\sigma\eta(1)} \cdots v_{\xi\sigma\eta(u)}
      )
      \\
      &=
      \sum_{\substack{\xi\in\Sh(p,n-p)\\\eta\in\Sh(p+q-n,n-q)}}
      v_{\xi\sigma\eta(p+q-n+1)} \cdots v_{\xi\sigma\eta(p)}
      v_{\xi\tau(p+1)}\cdots v_{\xi\tau(n)}
      v_{\xi\sigma\eta\tau(1)} \cdots v_{\xi\sigma\eta\tau(p+q-n)}
      \\
      &=
      \sum_{\substack{\xi\in\Sh(p,n-p)\\\eta\in\Sh(p+q-n,n-q)}}
      \Psi\Bigl[
      \xi
      \bigl( (\sigma\eta) \times \Id_{n-p} \bigr)
      \beta_{2n-p-q, p+q-n}
      (\Id_{n-q}\times\tau)
      \Bigr](v_1\cdots v_n),
    \end{aligned}
  \end{multline*}
  which proves the first part of the theorem. 

\item This part follows directly. Observe that for the case $n=p+q$ 
 we obtain the product of permutations as defined by
Malvenuto-Reutenauer in ~\cite{malvenuto93,malvenuto95}.

\noindent
In the case $n=p=q$ and since the action of $S_n$ on $V^{\otimes n}$ is from the right, the composition of permutations corresponds to composition of endomorphisms in the opposite order --compare with the results of Proposition \ref{P:interpolation}--.
\end{enumerate}
\end{proof}

For example, writing the permutations in word format we obtain that:
{\renewcommand\minalignsep{0pt}%
\begin{align*}
  12 \smashprod 132
  ={}&\textcolor{blue}{132 + 231 + 321}
  \\
  &+\Id_4 + 1243 + 1324 + 2134 + 2143 + 2314
  \\
  &+ 3124 + 3142 + 3214 + 4123 + 4132 + 4213
  \\
  &\textcolor{red}{{} + 12354 + 13254 + 14253 + 15243 + 23154}
  \\
  &\textcolor{red}{{} + 24153 + 25143 + 34152 + 35142 + 45132}.
\end{align*}}

Notice that the red terms correspond to the Malvenuto--Reutenauer product. 

            \section{The \smash product of non-commutative symmetric functions}
\label{S:ncsymmetric}

The {\em descent set} of a permutation $\sigma\in\S_n$ is the subset
of $[n-1]$ defined~by
\begin{equation*}
  \Des(\sigma)
  =
  \bigl\{
  i\in[n-1] \mid \sigma(i)>\sigma(i+1)
  \bigr\}.
\end{equation*}
Given $J\subseteq [n-1]$, define $\B_J$ as the set of permutations
$\sigma\in\S_n$ with $\Des(\sigma)\subseteq J$, and consider the
following elements of $\field\S_n$:
\begin{equation}\label{E:def-X}
  X_J
  = 
  \sum_{\sigma\in\B_J}
  \sigma.
\end{equation}
The family of subsets of $[n-1]$ is in bijective correspondence with the set of compositions of $n$. Recally that $\alpha=(a_1,\cdots,a_r)$ is a composition of $n$ if all the $a_i$ are positive integres and $\sum_{i=1}^r a_i=n$, in this situation we write $\alpha=(a_1,\cdots,a_r) \models n$. 

The bijection between compositions and subsets is:
\begin{equation*}
  (a_1,a_2,\ldots,a_r)
  \longleftrightarrow 
  \{a_1,a_1+a_2,\ldots,a_1+\cdots+a_{r-1}\}.
\end{equation*}
For instance, if $n=9$, then $X_{(1,2,4,2)}=X_{\{1,3,7\}}$ and $X_{(2,4,2,1)}=X_{\{2,6,8\}}$. 

\begin{definition}\label{D:defidescents}
Let $\Sigma_n=\field\{X_\alpha: \alpha \models n\} \subseteq \field\S_n$, be the subspace \emph{linearly spanned} by $X_\alpha$ with $\alpha$ 
composition of $n$ and 
\begin{equation*}
  \Sigma
  =
  \bigoplus_{n\geq 0}\Sigma_n.
\end{equation*}
\end{definition}  
 An important result of Garsia and Reutenauer characterizes 
the elements of
$\Sinfty$ whose images by $\Psi$ --in the notations of Lemma \ref{L:schur-weyl-duality}-- belong to $\sigma_{\operatorname{GL}(V)}\big(T(V)\big)$.  in terms of their action on the tensor
algebra.  Recall the Definition~\ref{D:Garsia-Reutenauer} of $\sigma_{\operatorname{GL}(V)}\big(T(V)\big)$. 


\begin{theorem}[Garsia--Reutenauer, \cite{garsia89}]
  \label{T:sigma-GR}
  Let $\Psi$ be isomorphism defined in Lemma \ref{L:schur-weyl-duality}, then the following diagram commutes: 
\begin{equation*}
  \xymatrix@R=20pt@C=20pt{
    \Sigma
    \ar@{}[r]|-*[@]{\subseteq}
    \ar[d]_{\Psi|_{\Sigma}} 
    & \Sinfty \ar[d]^-{\Psi}\\
   \sigma_{\operatorname{GL}(V)}\big(T(V)\big)
   \ar@{}[r]|-*[@]{\subseteq} 
    &
    \gradedEnd_{\operatorname{GL}(V)}\big(T(V)\big),}
    \end{equation*}
   and the map $\Psi|_{\Sigma}$ is surjective.
\end{theorem}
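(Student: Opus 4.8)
The plan is to reduce both claims to a single statement about the image of $\Psi$. By Lemma~\ref{L:schur-weyl-duality}, $\Psi$ is an injective graded map whose image consists of the $\GL(V)$-equivariant endomorphisms of finite support; hence every $\Psi(x)$ is automatically graded and equivariant, and $\sigma_{\GL(V)}\bigl(T(V)\bigr)=\Sigma\bigl(T(V)\bigr)\cap\Psi(\Sinfty)$. Therefore commutativity of the diagram is exactly the inclusion $\Psi(\Sigma)\subseteq\Sigma\bigl(T(V)\bigr)$, and, using that $\Psi$ is injective, surjectivity of $\Psi|_{\Sigma}$ is the reverse inclusion $\Psi^{-1}\bigl(\Sigma(T(V))\bigr)\subseteq\Sigma$. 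Since all spaces are graded I fix $n$ and argue inside $\field\S_n$ acting on $V^{\otimes n}$.

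For the inclusion I would first rewrite the spanning elements of $\Sigma_n$ intrinsically: the condition $\Des(\sigma)\subseteq\{a_1,\dots,a_1+\cdots+a_{r-1}\}$ says precisely that $\sigma$ is increasing on each consecutive block of sizes $a_1,\dots,a_r$, so $X_\alpha=\sum_{\xi\in\Sh(a_1,\dots,a_r)}\xi$. Comparing this with the iterated version of the coproduct formula~\eqref{E:explicit-coproduct-primitives} identifies $\Psi(X_\alpha)$ with the map $m^{(r-1)}\circ\Delta^{(r-1)}_\alpha$, where $\Delta^{(r-1)}_\alpha$ is the component of the $(r-1)$-fold coproduct landing in $V^{\otimes a_1}\otimes\cdots\otimes V^{\otimes a_r}$ and $m^{(r-1)}$ is iterated multiplication; I would confirm this by evaluating both sides on a product of primitives. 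Once $\Psi(X_\alpha)$ is expressed through the product and coproduct of $T(V)$ alone, its stability on every $G(g_1,\dots,g_m)$ is formal: part~(ii) of Lemma~\ref{L:coproduct-primitives}, applied iteratively, writes $\Delta^{(r-1)}$ of an element of $G(g_1,\dots,g_m)$ as a sum over multishuffles of $\{g_1,\dots,g_m\}$ whose tensor factors lie in $G$ of the corresponding sub-collections; projecting onto the degree-$\alpha$ part retains some summands, and part~(i) guarantees that remultiplying them returns an element of $G(g_1,\dots,g_m)$. Hence $\Psi(X_\alpha)\in\Sigma\bigl(T(V)\bigr)$, and the inclusion follows.

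The reverse inclusion is the substantial part, and is the content of Garsia--Reutenauer. Suppose $x=\sum_\sigma c_\sigma\sigma\in\field\S_n$ satisfies $\Psi(x)\in\Sigma\bigl(T(V)\bigr)$; I must show $c_\sigma$ depends only on $\Des(\sigma)$, which is equivalent to $x\in\Sigma_n$ because the elements $Y_J=\sum_{\Des(\sigma)=J}\sigma$ --- the M\"obius inverses of the $X_J$ --- form a basis of $\Sigma_n$. The idea is to probe $\Psi(x)$ with products of homogeneous free Lie elements on disjoint alphabets: for each composition $\beta=(b_1,\dots,b_s)\models n$, take distinct letters and let $p_1,\dots,p_s$ be left-normed Lie brackets supported on consecutive blocks of sizes $b_1,\dots,b_s$. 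The basic triangularity of such brackets --- the block-increasing word occurs with coefficient $1$, so the products $p_{\tau(1)}\cdots p_{\tau(s)}$ have controlled leading monomials --- lets me extract, from the constraint $\Psi(x)(p_1\cdots p_s)\in G(p_1,\dots,p_s)$, linear relations that equate $c_\sigma$ across permutations sharing a descent set. Letting $\beta$ range over all compositions produces all these relations and forces $x\in\Sigma_n$.

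A cleaner way to package the last step is by dimensions: since $\Psi|_\Sigma$ is injective, $\dim\Psi(\Sigma)_n=\dim\Sigma_n=2^{\,n-1}$, so surjectivity is equivalent to the bound $\dim\bigl(\Sigma(T(V))\cap\operatorname{end}_{\GL(V)}(T(V))\bigr)_n\le 2^{\,n-1}$, which is exactly what the probing argument delivers. I expect the necessity direction to be the main obstacle: one must show that preservation of \emph{all} the subspaces $G(g_1,\dots,g_m)$ forces the descent symmetry, rather than merely being implied by it. This is where the free-Lie / PBW structure of $\prim\bigl(T(V)\bigr)$ and the combinatorics of iterated bracket expansions carry the argument, in contrast to the purely formal Hopf-algebraic manipulation that suffices for the inclusion.
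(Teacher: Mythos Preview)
The paper does not supply its own proof of this theorem: it is stated with attribution to Garsia and Reutenauer and used as a black box (the only subsequent use is in the one-line proof of Theorem~\ref{T:smash-sigma}). So there is no in-paper argument to compare your proposal against.

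That said, your outline is essentially the Garsia--Reutenauer proof. The forward inclusion is exactly right: the identification $\Psi(X_\alpha)=m^{(r-1)}\circ\Delta^{(r-1)}_\alpha$ is the standard one, and once you have it, Lemma~\ref{L:coproduct-primitives} does the work --- this is the same mechanism the paper exploits in Theorem~\ref{T:sigma-hopf}. For the reverse inclusion your sketch is correct in spirit, and you have correctly identified where the real content lies: one tests $\Psi(x)$ on products $p_1\cdots p_s$ of homogeneous Lie elements on disjoint letter sets and uses the triangularity of bracket expansions to read off that $c_\sigma$ depends only on $\Des(\sigma)$. Two small points worth tightening: first, the dimension-count reformulation at the end only works once you already know the probing argument gives at most $2^{n-1}$ independent constraints, so it does not actually shortcut anything; second, to make the probing rigorous you need $\dim V\ge n$ so that distinct words in $V^{\otimes n}$ are linearly independent --- the paper's Lemma~\ref{L:schur-weyl-duality} implicitly assumes this, and you should too when extracting coefficients.
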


  

A fundamental result of Solomon~\cite{solomon76} states that $\Sigma_n$ is a
subalgebra of the symmetric group algebra $\field\S_n$ with the
composition product. This is \emph{Solomon's descent algebra}.  It is also well-known that $\Sigma$ is closed under the
external product~\cite{gelfand95,gessel84,malvenuto95}; in fact,
\begin{equation}
  \label{E:con-X}
  X_{(a_1,\ldots,a_r)}\externalsym X_{(b_1,\ldots,b_s)}
  =
  X_{(a_1,\ldots,a_r,b_1,\ldots,b_s)}.
\end{equation}
The space $\Sigma$ with the external product is the algebra of 
{\em non-commutative symmetric functions}.  

The following theorem generalizes these two situations in view of
the interpolation property of the Heisenberg product of permutations.
 
\begin{theorem}\label{T:smash-sigma}
  The subspace $\Sigma \subseteq \Sinfty$ is closed under the Heisenberg product.
\end{theorem}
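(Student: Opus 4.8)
The plan is to deduce the closure of $\Sigma$ under $\smashprod$ by transporting the problem along the Schur--Weyl isomorphism $\Psi$ and invoking the closure already proved for Garsia--Reutenauer endomorphisms. Concretely, I would apply Theorem~\ref{T:sigma-hopf} (and its graded, equivariant refinement) to the graded connected Hopf algebra $H=T(V)$, and then pull the resulting closure statement back through $\Psi$. The key structural input is that, by Definition~\ref{D:heisenberg_per}, the Heisenberg product on $\Sinfty$ is \emph{defined} so that $\Psi$ is multiplicative, i.e. $\Psi(a\smashprod b)=\Psi(a)\smashprod\Psi(b)$ for all $a,b\in\Sinfty$.

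First I would record that $\Psi$ restricts to a \emph{bijection} of $\Sigma$ onto $\sigma_{\operatorname{GL}(V)}\big(T(V)\big)$. Indeed, by Lemma~\ref{L:schur-weyl-duality} the map $\Psi$ is an isomorphism of vector spaces, hence in particular injective; and by Theorem~\ref{T:sigma-GR} its restriction $\Psi|_{\Sigma}$ surjects onto $\sigma_{\operatorname{GL}(V)}\big(T(V)\big)$. Combining injectivity with surjectivity, $\Psi|_{\Sigma}$ is a bijection, so that $\Sigma=\Psi^{-1}\big(\sigma_{\operatorname{GL}(V)}\big(T(V)\big)\big)$. With this identification in hand, the multiplicativity of $\Psi$ does all the work: given $a,b\in\Sigma$ we have $\Psi(a),\Psi(b)\in\sigma_{\operatorname{GL}(V)}\big(T(V)\big)$, so $\Psi(a\smashprod b)=\Psi(a)\smashprod\Psi(b)$, and this element lies in $\sigma_{\operatorname{GL}(V)}\big(T(V)\big)$ provided the latter is closed under the Heisenberg product of $\gradedEnd_{\operatorname{GL}(V)}\big(T(V)\big)$. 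Granting that, $a\smashprod b\in\Psi^{-1}\big(\sigma_{\operatorname{GL}(V)}\big(T(V)\big)\big)=\Sigma$, which is exactly the assertion.

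The one substantive point to verify --- and the step I expect to be the main obstacle --- is therefore that $\sigma_{\operatorname{GL}(V)}\big(T(V)\big)$ is a $\smashprod$-subalgebra of $\gradedEnd_{\operatorname{GL}(V)}\big(T(V)\big)$. This is precisely the graded, equivariant version of Theorem~\ref{T:sigma-hopf} for $H=T(V)$ acted on by $K=\field[\operatorname{GL}(V)]$, as set up at the end of Section~\ref{S:descents}. Two ingredients combine there: Theorem~\ref{T:sigma-hopf} gives that $\Sigma(T(V))$ is $\smashprod$-closed, while Lemma~\ref{lemm:subalg1} (see also Remark~\ref{R:equivariantinterpolation}) gives that the graded, finite-rank, $K$-equivariant endomorphisms are $\smashprod$-closed --- here one uses that the group algebra $\field[\operatorname{GL}(V)]$ is cocommutative, which is the hypothesis under which the $K$-action is compatible with the Heisenberg product. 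Since $\sigma_{\operatorname{GL}(V)}\big(T(V)\big)$ is, by construction, the intersection of these two $\smashprod$-closed subspaces, it is itself closed under $\smashprod$, which completes the argument. A more hands-on alternative would bypass Schur--Weyl entirely and instead compute $X_\alpha\smashprod X_\beta$ directly from the permutation formula~\eqref{T:smash-permutations}, verifying membership in $\Sigma$ term by term; I would keep this combinatorial computation as a second, independent route, since it also yields the explicit structure constants generalizing the Garsia--Remmel--Reutenauer--Solomon rule.
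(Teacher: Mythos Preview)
Your proposal is correct and follows essentially the same approach as the paper's proof, which simply cites Theorem~\ref{T:sigma-GR}, Theorem~\ref{T:sigma-hopf}, and Schur--Weyl duality (Lemma~\ref{L:schur-weyl-duality}); you have merely unpacked the logic more explicitly, including the intersection argument at the end of Section~\ref{S:descents} that makes $\sigma_{\operatorname{GL}(V)}\bigl(T(V)\bigr)$ Heisenberg-closed. Your suggested combinatorial alternative is exactly the content of Theorem~\ref{T:smash-X}, which the paper presents as a second, independent proof.
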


\begin{proof}
  This a direct result from Theorem~\ref{T:sigma-GR},
  Theorem~\ref{T:sigma-hopf}, and Schur-Weyl duality
  (Lemma~\ref{L:schur-weyl-duality}).
\end{proof}




The next theorem gives another version of the same result with an explicit description of the value of $X_\alpha \smashprod X_\beta$ for $\alpha \models p$ and $\beta \models q$.  The structure coefficients of $X_\alpha \smashprod X_\beta$ are expressed in terms of the matrices
$\M_{\alpha,\beta}^n$ defined in Subsection~\ref{SS:complete-homogeneous}. 

In~\cite{moreira08} appears another combinatorial proof which extends a proof of
Schocker in ~\cite{schocker04} for the composition product.

Moreover, another proof of Theorem~\ref{T:smash-sigma} can be obtained by
extending the Heisenberg product to the Coxeter complex of the symmetric
group (that is, the faces of the permutahedron).  This makes a
connection with recent work of Brown, Mahajan, Schocker, and others on
this aspect of the theory of descent
algebras~\cite{brown04,aguiar04a,schocker04}.

Recall the following definition: take
$\alpha=(a_1,\ldots,a_r)\composition p$,
$\beta=(b_1,\ldots,b_s)\composition q$ two compositions and let $n$ be an integer, $\max(p,q)\leq n\leq p+q$. Let $a_0:=n-p$, $b_0:=n-q$, and
 $\M_{\alpha,\beta}^n$ be the set of all integral 
$(s+1)\times(r+1)$--matrices with non negative entries $M=(m_{ij})_{
    \substack{0\leq i\leq s\\\,0\leq j\leq r}}$ such that: the sequence of column sums  is $(a_0,a_1,\ldots,a_r)$, the sequence of row sums  is $(b_0,b_1,\ldots,b_s)$ and the first entry is $m_{00}=0$.
To visualize these conditions we write the diagram:
\begin{equation*}
    \begin{smasharray}{cccc!{\putbar}c}
      0 & m_{01} & \cdots & m_{0r} & n-q \\ 
      m_{10} & m_{11} & \cdots & m_{1r} & b_1 \\
      \vdots & \vdots & \ddots & \vdots & \vdots \\
      m_{s0} & m_{s1} & \cdots & m_{sr} & b_s \\ 
      \cline{1-4} 
      n-p & a_1  & \cdots &  a_r \omitbar
    \end{smasharray}
\end{equation*}

\begin{theorem}
  \label{T:smash-X}
  Let $\alpha\composition p$ and $\beta\composition q$ be two
  compositions.  Then
  \begin{equation}\label{E:smash-X}
    X_\alpha\smashprod X_\beta 
    = 
    \sum_{n=\max(p,q)}^{p+q}
    \sum_{M\in\M_{\alpha,\beta}^n} X_{c(M)}
  \end{equation}
  where $c(M)$ is the composition whose parts are the non-zero entries of $M$, read
  from left to right and from top to bottom.
\end{theorem}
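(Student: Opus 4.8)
The plan is to compute the Heisenberg product $X_\alpha \smashprod X_\beta$ by exploiting the formula~\eqref{E:smash-endomorphisms-in-primitives} for the Heisenberg product of endomorphisms evaluated on a product of primitives, together with the explicit description of the permutation action from Schur--Weyl duality (Lemma~\ref{L:schur-weyl-duality}). Since $\Psi$ is an isomorphism and $\Sigma$ is closed under $\smashprod$ by Theorem~\ref{T:smash-sigma}, it suffices to identify the resulting element of $\Sigma$ by matching descent compositions. First I would reduce to working in the tensor algebra $T(V)$: apply $\Psi(X_\alpha)\smashprod\Psi(X_\beta)$ to a generic tensor $v_1\cdots v_n$ of degree $n$ with $\max(p,q)\le n\le p+q$, recalling that $X_\alpha=\sum_{\sigma\in\B_\alpha}\sigma$ sums over all permutations whose descent set refines the composition $\alpha$. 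The degree restriction~\eqref{E:smash-degree} of Proposition~\ref{P:interpolation} guarantees only the terms with $\max(p,q)\le n\le p+q$ survive, which matches the outer sum in~\eqref{E:smash-X}.

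The key combinatorial step is to recognize how the matrices $M\in\M_{\alpha,\beta}^n$ encode the deconcatenation-and-recombination prescribed by the diagram~\eqref{D:smash-diagram}. In formula~\eqref{E:smash-endomorphisms-in-primitives} the element $h_1\cdots h_n$ is split by a shuffle into a ``$k$-part'' fed to $f=\Psi(X_\alpha)$ and an ``$\ell$-part''; the output of $f$ is itself recomultiplied, and one tensor factor is concatenated with the $\ell$-part before being fed to $g=\Psi(X_\beta)$. The combinatorial bookkeeping of which original positions land in each of the three zones --- the part seen only by $f$, the overlap seen by both, and the part seen only by $g$ --- is precisely the data of the decomposition $[n]=U\sqcup W\sqcup V$ from the proof of the generating-function theorem, with $|U|=n-q$, $|W|=p+q-n$, $|V|=n-p$. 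Refining each zone further according to how $X_\alpha$ and $X_\beta$ distribute their blocks produces exactly the entries $m_{ij}$ of the matrix: the column sums record the block structure of $\alpha$ (with the extra column $a_0=n-p$ for the part not seen by $f$), the row sums record that of $\beta$ (with $b_0=n-q$), and $m_{00}=0$ encodes that no position can be absent from both factors.

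Having set up this correspondence, I would argue that for each matrix $M$ the collected terms assemble into a single $X_{c(M)}$, where the reading order (left-to-right, top-to-bottom) of the nonzero entries reflects the order in which the surviving letters appear in the output word after the cyclic map $\mathrm{cyclic}$ and the final multiplication $m$ in~\eqref{D:smash-diagram}. The main obstacle I anticipate is the identification of the descent composition: one must check that summing $\Psi(\sigma)\smashprod\Psi(\tau)$ over all $\sigma\in\B_\alpha$ and $\tau\in\B_\beta$ yields, for each fixed $M$, a full sum over $\B_{c(M)}$ with multiplicity one, rather than some over- or undercounting. Concretely I expect this to follow by comparing with the explicit permutation formula~\eqref{T:smash-permutations}: the shuffles $\xi$ and $\eta$ there realize the zone-splitting, and the claim is that as $\sigma,\tau,\xi,\eta$ range over their domains the products $\xi\bigl((\sigma\eta)\times\Id\bigr)\beta_{2n-p-q,p+q-n}(\Id\times\tau)$ group cleanly by the matrix $M$ and exhaust $\B_{c(M)}$ bijectively. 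Verifying this exhaustion-with-multiplicity-one is the technical heart of the argument; the reading convention for $c(M)$ is designed so that the descent structure of the output permutations is governed precisely by the block pattern of $M$.
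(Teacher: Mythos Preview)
Your plan is essentially the paper's own approach: fix $n$, expand $X_\alpha\smashprod X_\beta$ via the permutation formula~\eqref{T:smash-permutations}, assign to each pair $(\eta,\tau)$ a matrix $M\in\M_{\alpha,\beta}^n$, and then show that the map $(\xi,\eta,\sigma,\tau)\mapsto \xi\bigl((\sigma\eta)\times\Id\bigr)\beta_{2n-p-q,p+q-n}(\Id\times\tau)$ is a bijection onto $\B_{c(M)}$. The paper makes your ``zone refinement'' precise by setting $m_{ij}=\card\bigl(F_i\cap\varphi_{\eta,\tau}^{-1}E_j\bigr)$ where $\varphi_{\eta,\tau}=(\eta\times\Id)\beta_0(\Id\times\tau)$ and $E_j,F_i$ are the consecutive intervals cut out by $\alpha,\beta$ (with extra blocks $E_0=[p+1,n]$, $F_0=[1,n-q]$); the bijection you correctly flag as the technical heart is then established through two lemmas (Lemmas~\ref{L:interval} and~\ref{L:bijection}) whose proofs are deferred to the Appendix.
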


Observe that even though this formula is similar to the one we had for
symmetric functions~\eqref{E:smash-complete-homogeneous}, the
occurrence of the compositions as indices of the basis makes the
connection between the Heisenberg product and the external and
Solomon products considerably subtler than in the commutatative
context.  In particular, a formula like~\eqref{E:zelevinski-identity}
does not longer hold. Moreover, the above expression will allow us to make the connection with the Heisenberg product of the representations of the symmetric group. This point is taken up in Section~\ref{S:nc-to-commutative}.

By the interpolation property of the Heisenberg product,
Theorem~\ref{T:smash-X} contains as special cases rules for the
product in Solomon's descent algebra and for the external product
of two basis elements of $\Sigma$.  One readily verifies that the
former is precisely the well-known rule of Garsia, Remmel, Reutenauer,
and Solomon as given in~\cite[Proposition 1.1]{garsia89}, while the latter
is the one appearing in~\eqref{E:con-X}.

As an example we have the following formula
\begin{equation}
  \label{E:smash-X_1-to-the-p}
  X_{(1^u)} \smashprod X_{(1^v)} 
  = 
  \sum_{n=\max(u,v)}^{u+v} 
  \binom{u}{n-v} \binom{v}{n-u} (u+v-n)! \, X_{(1^n)},
\end{equation}  
where $(1^n)$ is the composition of $n$ with $n$ parts equal to $1$.
The coefficients arise by inspection of all the possible ways to fill out the entries of the matrix $M$ in a diagram as below with row and column sums as prescribed. 
\begin{equation*}
  \begin{smasharray}{cccc!{\putbar}c}
    0 & ? & \cdots & ? & n-v \\ 
    ? & ? & \cdots & ? & 1 \\
    \vdots & \vdots & \ddots & \vdots & \vdots \\
    ? & ? & \cdots & ? & 1 \\
    \cline{1-4} 
    n-u & 1  & \cdots & 1 \omitbar \\
  \end{smasharray}
\end{equation*}

Similarly, one verifies that:
\begin{equation*}
  X_{(1)}^{\#(n)} = \sum_{k=1}^n S(n,k) X_{(1^k)},
\end{equation*}
where the $S(n,k)$ are the Stirling numbers of the second kind.

\begin{proof}[Proof of Theorem~\ref{T:smash-X}]
  Let us take a fixed integer $n$ between $\max(p,q)$ and $p+q$.  To
  the compositions $\alpha=(a_1,\ldots,a_r)$ and
  $\beta=(b_1,\ldots,b_s)$ we associate the following sets:
  \begin{align*}
    E^{n}_0 &= [p+1,n], 
    & 
    F^{n}_0 &= [1, n-q], \\
    E^{n}_1 &= [1,a_1], 
    &
    F^{n}_1 &= n-q + [1, b_1], \\
    E^{n}_2 &= [a_1+1, a_1+a_2], 
    &
    F^n_2 &= n-q + [b_1+1, b_1+b_2], \\
    &\centerinequal{\vdots} 
    & 
    &\centerinequal{\vdots} \\
    E^n_{r} &= [a_1+\cdots+a_{r-1}+1, p], 
    &
    F^n_{s} &= n-q + [b_1+\cdots+b_{s-1}+1, q].
  \end{align*}
  Observe that the family of intervals 
  $\{E^n_j\}_{j\in \{0,\ldots , r\}}$
  and $\{F^n_i\}_{i\in \{0,\ldots , s\}}$ are partitions of
  $[1,n]$.  It is also clear that $\sigma\in\B_\alpha$ if and only
  if $\sigma\times\Id_{n-p}$ is increasing in $E^n_j$ for all $j\in
  \{0,\ldots ,r\}$.  Similarly, $\tau\in\B_\beta$ if and only if
  $\Id_{n-q}\times\tau$ is increasing in $F^n_i$ for all $i\in
  \{0,\ldots ,s\}$.  Observe, also, that $\card{E^n_j}$ is the $j$-th
  coordinate of the pseudo-composition $(n-p,a_1,\ldots,a_r)$, and
  $\card{F^n_i}$ is the $i$-th coordinate of $(n-q,b_1,\ldots,b_s)$.

  Given $\eta \in \Sh(p+q-n,n-q)$ and $\tau \in \B_\beta$, call
  $\varphi_{\eta,\tau}=(\eta\times\Id_{n-p})\beta_0(\Id_{n-q}\times\tau)$
  and define the matrix
  \begin{equation*} 
    M_{\eta,\tau} 
    = 
    \bigl\{ 
    \card(F^n_i \cap \varphi_{\eta,\tau}^{-1}E^n_j)
    \bigr\}_{\substack{0\leq i \leq s,\\ 0 \leq j \leq r}}
  \end{equation*}
  where we have abbreviated $\beta_{2n-p-q,p+q-n} = \beta_{0}$.  In
  this situation $M_{\eta,\tau} \in \M_{\alpha,\beta}^n$.  Indeed, if
  we call $m_{ij}= \card(F^n_i \cap \varphi_{\eta,\tau}^{-1}E^n_j)$,
  for $i=j=0$ we have that
  \begin{equation*}
    \varphi_{\eta,\tau}[1,n-q]
    =
    \eta[p+q-n+1,p] \subseteq [1,p],
  \end{equation*}
  which shows that the intersection $F^n_0 \cap
  \varphi_{\eta,\tau}^{-1}E^n_0$ is empty, and then $m_{00}=0$.  The
  sum $m_{0j}+\cdots+m_{sj}$ equals the number of elements of $E^n_j$,
  which is, as noted before, the $j$-th entry of the composition
  $(n-p,a_1,\ldots,a_r)$.  The same argument applies to the sum of the
  rows.  In this manner, sending $\tau \mapsto M_{\eta,\tau}$ we
  define a map $\B_\beta \to \M_{\alpha,\beta}^n$.  Take $M =
  \{m_{ij}\} \in \M^n_{\alpha,\beta}$ and let $\B_\beta^{\eta,n}(M)$
  the corresponding fiber of this map:
  \begin{equation*}
    \B_\beta^{\eta,n}(M) 
    = 
    \bigl\{ 
    \tau\in\B_\beta 
    \bigm|
    \card(F^n_i \cap \varphi_{\eta,\tau}^{-1}E^n_j ) = m_{ij}
    \text{\ for all $j\in\{0,\dots,r\}$, $i\in\{0,\ldots ,s\}$}
    \bigr\}.
  \end{equation*}
  Therefore, we have a partition of $\B_\beta = \bigcup_{M \in
    \M^n_{\alpha,\beta}} \B_\beta^{\eta,n}(M)$.

  For $\xi\in\Sh(p,n-p)$ and $\eta\in\Sh(p+q-n,n-q)$, let us denote
  $
  g^n_{\xi,\eta}(\sigma,\tau) 
  = 
  \xi\bigl(
  (\sigma\eta)\times\Id_{n-p}
  \bigr) 
  \beta_0
  (\Id_{n-q}\times\tau)
  $,
  the $n$-term in the sum~\eqref{T:smash-permutations}.  The function
  $g^n_{\xi,\eta}$ is bilinear, and we can write
  \begin{equation*}
    X_\alpha\smashprod X_\beta 
    = 
    \sum_{n} \sum_{\xi,\eta}
    g^n_{\xi,\eta} (X_\alpha, X_\beta).
  \end{equation*}

  From now on as $n$ is fixed we will omit it in the notations of the
  sets and the functions.  Next we show that
  \begin{equation*}
    \sum_{\xi,\eta}
    g_{\xi,\eta}(X_\alpha, X_\beta) 
    =
    \sum_{M \in \M_{\alpha,\beta}} X_{c(M)}. 
  \end{equation*}
  For this, we write
  \begin{align} 
    \label{E:sums_of_g}
    \sum_{\xi,\eta} g_{\xi,\eta}(X_\alpha, X_\beta) 
    &=
    \sum_{\xi,\eta} g_{\xi,\eta}
    \biggl(
    \sum_{\sigma\in\B_\alpha} \sigma,
    \sum_{M\in\M_{\alpha,\beta}}
    \sum_{\tau\in\B_\beta^\eta(M)} \tau 
    \biggr) \nonumber 
    \\
    &= 
    \sum_{M\in\M_{\alpha,\beta}}
    \sum_{\xi,\eta}
    \sum_{\sigma\in\B_\alpha}
    \sum_{\tau\in\B_\beta^\eta(M)} 
    g_{\xi,\eta}(\sigma,\tau).
  \end{align}
  If we denote by $S_{\alpha,\beta}(M)$ the set of elements
  $(\xi,\eta,\sigma,\tau)$ such that $\xi\in\Sh(p,n-p)$,
  $\eta\in\Sh(p+q-n,n-q)$, $\sigma\in\B_\alpha$ and
  $\tau\in\B_\alpha^\eta(M)$; then the map
  $\psi:S_{\alpha,\beta}(M)\to \B_{c(M)}$ given by
  $\psi(\xi,\eta,\sigma,\tau)=g_{\xi,\eta}(\sigma, \tau)$ is a
  bijection.  We prove this fact in Lemma~\ref{L:bijection}.  In this
  situation, if we group together the last three sums
  of~\eqref{E:sums_of_g} we obtain
  \begin{equation*}
    \sum_{\xi,\eta} g_{\xi,\eta}(X_\alpha, X_\beta) =
    \sum_{M\in\M_{\alpha,\beta}} X_{c(M)},
  \end{equation*}
  which concludes the proof of the theorem.
\end{proof}

In the following two lemmas we assume the notations of the previous
theorem.  Their proofs, being rather technical are presented in the Appendix. See:~\ref{P:interval}
and~\ref{P:bijection}.

\begin{lemma} 
  \label{L:interval}
  For $\eta\in\Sh(p+q-n,n-q)$, $\tau\in\B_\beta$ and for all
  $i=0,\ldots,s$ and $j=0,\ldots ,r$, the sets
  \begin{equation*}
    F_i \cap \varphi_{\eta,\tau}^{-1}E_j
  \end{equation*}
  are disjoint intervals.  Moreover, in each of these intervals the
  function $\varphi_{\eta,\tau}$ is increasing and has image either
  contained in $[1,p]$ or contained in $[p+1,n]$.
\end{lemma}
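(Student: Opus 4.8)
The plan is to make the permutation $\varphi_{\eta,\tau}=(\eta\times\Id_{n-p})\,\beta_0\,(\Id_{n-q}\times\tau)$, with $\beta_0=\beta_{2n-p-q,\,p+q-n}$, completely explicit as a piecewise function of $k\in[1,n]$ and then read off the three assertions. Recall that $\beta_0$ sends $[1,2n-p-q]$ increasingly onto $[p+q-n+1,n]$ via $m\mapsto(p+q-n)+m$, and $[2n-p-q+1,n]$ increasingly onto $[1,p+q-n]$ via $(2n-p-q)+m'\mapsto m'$. Tracing $k$ through the three factors in order (first $\Id_{n-q}\times\tau$, then $\beta_0$, then $\eta\times\Id_{n-p}$), and checking in each case whether the intermediate value lands in the first $2n-p-q$ positions or the last $p+q-n$ (the bounds $\max(p,q)\le n\le p+q$ guarantee all shifts are nonnegative), one obtains, writing $j=k-(n-q)$,
\[
\varphi_{\eta,\tau}(k)=
\begin{cases}
\eta\bigl((p+q-n)+k\bigr), & k\in[1,n-q],\\
p+\tau(j), & k\in[n-q+1,n],\ \tau(j)\le n-p,\\
\eta\bigl(\tau(j)-(n-p)\bigr), & k\in[n-q+1,n],\ \tau(j)> n-p.
\end{cases}
\]

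Next I would record the two elementary monotonicity inputs. Since $\eta\in\Sh(p+q-n,n-q)$, the map $\eta$ is strictly increasing on $[1,p+q-n]$ and, separately, on $[p+q-n+1,p]$. Since $\tau\in\B_\beta$, its descents avoid the markers of $\beta$, so $\tau$ is strictly increasing on each block $J_i=[b_1+\cdots+b_{i-1}+1,\,b_1+\cdots+b_i]$, which is exactly the block of $[1,q]$ indexing $F_i$.

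The analysis then splits by the row index $i$. On $F_0=[1,n-q]$ the first line gives $\varphi_{\eta,\tau}(k)=\eta\bigl((p+q-n)+k\bigr)$, a composite of $k\mapsto(p+q-n)+k$ with $\eta$ restricted to $[p+q-n+1,p]$, hence strictly increasing with image in $[1,p]$; thus $F_0\cap\varphi_{\eta,\tau}^{-1}E_0=\emptyset$, while for $j\ge1$ the set $F_0\cap\varphi_{\eta,\tau}^{-1}E_j$ is the preimage of the interval $E_j$ under a strictly increasing map, hence an interval on which $\varphi_{\eta,\tau}$ is increasing with image in $E_j\subseteq[1,p]$. On $F_i$ with $i\ge1$, strict monotonicity of $\tau$ on $J_i$ makes $\tau(j)$ strictly increasing, so the condition $\tau(j)\le n-p$ cuts $F_i$ cleanly into a prefix and a suffix. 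On the prefix, $\varphi_{\eta,\tau}(k)=p+\tau(j)$ is increasing with image in $[p+1,n]=E_0$, so the prefix is exactly $F_i\cap\varphi_{\eta,\tau}^{-1}E_0$ and meets no $E_j$ with $j\ge1$. On the suffix, $\varphi_{\eta,\tau}(k)=\eta\bigl(\tau(j)-(n-p)\bigr)$ is a composite of the strictly increasing $j\mapsto\tau(j)-(n-p)\in[1,p+q-n]$ with $\eta$ on $[1,p+q-n]$, hence strictly increasing with image in $[1,p]$; therefore $F_i\cap\varphi_{\eta,\tau}^{-1}E_0=\emptyset$ there, and for each $j\ge1$ the set $F_i\cap\varphi_{\eta,\tau}^{-1}E_j$ is again a preimage of an interval under a strictly increasing map, hence an interval of the asserted type.

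Finally, disjointness is automatic: the $\{F_i\}$ partition $[1,n]$, and since $\varphi_{\eta,\tau}$ is a bijection and the $\{E_j\}$ partition $[1,n]$, so do the $\{\varphi_{\eta,\tau}^{-1}E_j\}$; hence the sets $F_i\cap\varphi_{\eta,\tau}^{-1}E_j$ are pairwise disjoint. I expect the only real obstacle to be the bookkeeping behind the displayed formula for $\varphi_{\eta,\tau}$—in particular getting the index shifts through $\beta_0$ right and confirming the intermediate values fall in the claimed ranges, so that the correct branch of $\beta_0$ and of $\eta\times\Id_{n-p}$ is selected. Once that formula is in hand, the interval, monotonicity, and image claims follow immediately from the two monotonicity inputs.
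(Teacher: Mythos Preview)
Your proof is correct. The explicit piecewise formula for $\varphi_{\eta,\tau}$ checks out (including the index shifts through $\beta_0$), and from it the interval, monotonicity, and image claims follow exactly as you say.

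The paper's own proof carries out essentially the same case analysis but is organized differently: instead of first deriving a closed formula, it fixes $x<z<y$ with $x,y\in F_i\cap\varphi^{-1}E_j$ and argues directly, case by case in $(i,j)$, that $\varphi(x)<\varphi(z)<\varphi(y)$ and hence $\varphi(z)\in E_j$. Your route---compute $\varphi_{\eta,\tau}$ explicitly, identify on each $F_i$ a strictly increasing piece (or a prefix/suffix pair of such pieces) landing in $[1,p]$ or $[p+1,n]$, and then invoke ``preimage of an interval under a monotone map is an interval''---packages the same information more transparently and makes the image assertion immediate rather than a byproduct. The paper's convexity argument is slightly quicker to write down since it avoids recording the formula, but yours gives a clearer global picture of what $\varphi_{\eta,\tau}$ actually does.
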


\begin{lemma} 
  \label{L:bijection}
  For $M\in\M_{\alpha,\beta}$, the map $\psi:S_{\alpha,\beta}(M) \to
  \B_{c(M)}$, which sends $(\xi,\eta,\sigma,\tau)$ into
  $g_{\xi,\eta}(\sigma,\tau)$, is a bijection.
\end{lemma}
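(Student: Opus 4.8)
The plan is to prove that $\psi:S_{\alpha,\beta}(M) \to \B_{c(M)}$ is a bijection by separately establishing that it is well-defined (lands in $\B_{c(M)}$), injective, and surjective, using Lemma~\ref{L:interval} as the main technical input. First I would unwind the definitions: an element $(\xi,\eta,\sigma,\tau)\in S_{\alpha,\beta}(M)$ produces the permutation $g_{\xi,\eta}(\sigma,\tau) = \xi\bigl((\sigma\eta)\times\Id_{n-p}\bigr)\beta_0(\Id_{n-q}\times\tau)$, and I must show its descent set is contained in the set corresponding to the composition $c(M)$, whose parts are the nonzero entries $m_{ij}$ read left-to-right, top-to-bottom. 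The key observation is that $c(M)$ records exactly the cardinalities of the nonempty intersections $F_i \cap \varphi_{\eta,\tau}^{-1}E_j$, ordered lexicographically in $(i,j)$.

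The central step is to translate the descent condition into the interval geometry supplied by Lemma~\ref{L:interval}. That lemma tells me the sets $F_i \cap \varphi_{\eta,\tau}^{-1}E_j$ are disjoint intervals of $[1,n]$ on each of which $\varphi_{\eta,\tau}$ is increasing with image entirely inside $[1,p]$ or entirely inside $[p+1,n]$. Precomposing with $\xi$ (which is a shuffle, hence increasing on the two blocks $[1,p]$ and $[p+1,n]$) preserves monotonicity on each such interval, because $\xi$ is order-preserving separately on the $E$-side destinations. Consequently $g_{\xi,\eta}(\sigma,\tau)$ is increasing on each interval $F_i \cap \varphi_{\eta,\tau}^{-1}E_j$, so its descents can only occur at the right endpoints of these intervals. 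Reading the interval lengths in the $(i,j)$-order gives precisely the composition $c(M)$, which shows $\Des\bigl(g_{\xi,\eta}(\sigma,\tau)\bigr)$ is refined by the composition $c(M)$, i.e.\ $g_{\xi,\eta}(\sigma,\tau)\in\B_{c(M)}$. This makes $\psi$ well-defined.

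For injectivity and surjectivity I would argue that the data $(\xi,\eta,\sigma,\tau)$ can be reconstructed from the image permutation $\omega=g_{\xi,\eta}(\sigma,\tau)$ together with the fixed matrix $M$. Given $\omega\in\B_{c(M)}$, the positions of its values in $[1,p]$ versus $[p+1,n]$ recover which intervals map to the $E$-side inside $[1,p]$ and which land in $[p+1,n]$; since the row/column index data of $M$ is fixed, this determines the placement of each block, hence recovers $\xi$ (from how the $[1,p]$ and $[p+1,n]$ blocks are interleaved in the target) and $\eta$ (from the internal ordering prescribed by $\beta_0$), and then $\sigma\in\B_\alpha$ and $\tau\in\B_\beta^\eta(M)$ are forced by the increasing-on-blocks constraints defining $\B_\alpha$ and $\B_\beta$. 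Because each factor in $g_{\xi,\eta}$ is increasing on its designated intervals, the factorization $\omega = \xi\bigl((\sigma\eta)\times\Id_{n-p}\bigr)\beta_0(\Id_{n-q}\times\tau)$ is unique, giving both injectivity and surjectivity simultaneously.

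The main obstacle I anticipate is bookkeeping the many interleaved factors cleanly: $\xi$, $\eta$, $\sigma$, $\tau$, and the fixed shuffle $\beta_0$ act on overlapping index ranges, and verifying that the reconstruction map is genuinely inverse to $\psi$ requires carefully tracking how $\beta_0=\beta_{2n-p-q,\,p+q-n}$ swaps the ``overlap'' block $[1,p+q-n]$ past the tail. Establishing that the reconstructed $\tau$ indeed lands in the correct fiber $\B_\beta^\eta(M)$ — i.e.\ that $\card(F_i\cap\varphi_{\eta,\tau}^{-1}E_j)=m_{ij}$ holds for the recovered data — is where the combinatorial care concentrates, and it is exactly here that Lemma~\ref{L:interval} does the heavy lifting by guaranteeing the intersections are intervals of the prescribed sizes. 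Once that correspondence is pinned down, counting or direct construction of the two-sided inverse completes the proof.
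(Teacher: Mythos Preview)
Your approach is essentially the paper's: prove well-definedness using Lemma~\ref{L:interval}, then recover the quadruple from the image permutation. There is, however, a genuine gap in your well-definedness argument. You write that after applying $\varphi_{\eta,\tau}$ you ``precompose with $\xi$'' and conclude monotonicity on each interval. But $g_{\xi,\eta}(\sigma,\tau) = \xi\,(\sigma\times\Id_{n-p})\,\varphi_{\eta,\tau}$, not $\xi\,\varphi_{\eta,\tau}$: between $\varphi_{\eta,\tau}$ and $\xi$ sits the factor $(\sigma\times\Id_{n-p})$, and this is precisely where the hypothesis $\sigma\in\B_\alpha$ enters. Lemma~\ref{L:interval} gives you that $\varphi_{\eta,\tau}$ maps each interval $F_i\cap\varphi_{\eta,\tau}^{-1}E_j$ increasingly into $E_j$; you then need that $\sigma\times\Id$ is increasing on each $E_j$ (which is exactly what $\sigma\in\B_\alpha$ says, together with $E_0=[p+1,n]$ where the identity acts), and only after that does $\xi\in\Sh(p,n-p)$ finish the job. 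Without the $\sigma$-step the chain of monotonicity breaks.

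A second point you gloss over: you assert that ``reading the interval lengths in the $(i,j)$-order gives precisely the composition $c(M)$'', but this requires knowing that the intervals $F_i\cap\varphi_{\eta,\tau}^{-1}E_j$, besides having the right cardinalities $m_{ij}$, actually occur inside $F_i$ in the order $j=0,1,\dots,r$. The paper sets this up by introducing the consecutive intervals $R_{ij}$ of lengths $m_{ij}$ and then identifying $F_i\cap\varphi_{\eta,\tau}^{-1}E_j = R_{ij}$; you should do the same. For the reconstruction, your sketch is in the right spirit but too vague: the paper's cleaner route is to observe that $\xi(\sigma\times\Id)E_j = \gamma\bigl(\bigcup_i R_{ij}\bigr)$ for any $\gamma\in\B_{c(M)}$, which pins down $\xi(\sigma\times\Id)$ uniquely as the permutation sending $E_j$ increasingly onto that set; then $\xi$ and $\sigma$ split off, $\eta$ is read from the image of $[1,n-q]$ under $(\sigma\times\Id)^{-1}\xi^{-1}\gamma$, and $\tau$ is what remains. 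Your ``recover $\xi$ from which values lie in $[1,p]$ versus $[p+1,n]$'' is not quite right, since after applying $\xi$ the values fill all of $[1,n]$; it is the preimage side that carries that information, and you have to extract $\xi(\sigma\times\Id)$ as a block first.
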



            \section{From non-commutative to commutative symmetric functions}
\label{S:nc-to-commutative}

In the previous four sections of \emph {Part 2: Non--commutative context}; we constructed the following commutative
diagram of algebras --endowed with their respective Heisenberg products-- 
(omitting the part that includes the space $\Lambda$):
\begin{equation*}
  \xymatrix@R=20pt@C=20pt{\Lambda&
    \Sigma
    \ar@{..>}[l]_{\pi}\ar@{}[r]|-*[@]{\subseteq}
    \ar[d]_{\Psi|_{\Sigma}} 
    & \Sinfty \ar[d]^-{\Psi}&\\
   &\sigma_{\operatorname{GL}(V)}\big(T(V)\big)
   \ar@{}[r]|-*[@]{\subseteq} 
    &
    \gradedEnd_{\operatorname{GL}(V)}\big(T(V)\big)\ar@{}[r]|-*[@]{\subseteq}&\gradedEnd\bigl(T(V)\bigr).}
    \end{equation*}

In the part of the above diagram that excludes $\Lambda$, all the Heisenberg products are induced by the one defined in the larger space $\gradedEnd\bigl(T(V)\bigr)$. Recall also that the vertical isomorphisms are due to Schur--Weyl duality and to the results of \cite{garsia89} as mentioned in Section \ref{S:ncsymmetric}, Theorem \ref{T:sigma-GR}.

We want to incorporate into this diagram the space of symmetric functions  $\sym$ equipped also with the Heisenberg product, this is expressed in the dotted arrow that is to be defined.

For each $n\ge 0$ we define the linear map $\pi_n:\Sigma_n\to\Lambda_n$ by
its values on the basis $\{X_\alpha: \text{$\alpha$ composition of}\,\, n\}$ as: 
\begin{equation*}
  \pi_n(X_\alpha) = h_{\widetilde\alpha},
\end{equation*}
where $\widetilde\alpha$ is the
partition of $n$ obtained by reordering the entries of the composition $\alpha$.
Let us denote by $\pi:\Sigma\epi\Lambda$ the map induced in the direct
sums.


It is well known that $\pi$ is a morphism if we endow $\Sigma$  with the Solomon product and $\Lambda$ with the internal product, it is also a morphism when we endow both spaces with the external products.  

The theorem that follows generalizes these compatibilities by proving that 
the map $\pi:\Sigma\epi\sym$ is a morphism with respect to 
the Heisenberg products in $\Lambda$ and
$\Sigma$, as constructed in Sections ~\ref{S:symmetric} and~\ref{S:ncsymmetric}, respectively.

\begin{theorem}
  For any pair of compositions $\alpha$ and $\beta$:
  \begin{equation*}
    \pi(X_\alpha\smashprod X_\beta)
    =
    h_{\widetilde\alpha} \smashprod h_{\widetilde\beta}.
  \end{equation*}
\end{theorem}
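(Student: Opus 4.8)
The plan is to deduce the statement directly by comparing the two explicit combinatorial formulas already at our disposal: the expansion of $X_\alpha\smashprod X_\beta$ in the basis $(X_\gamma)$ furnished by Theorem~\ref{T:smash-X}, and the expansion of $h_\alpha\smashprod h_\beta$ in the basis $(h_\lambda)$ furnished by Theorem~\ref{T:lambda-smash-combinatorial}. The decisive observation is that both formulas are indexed by \emph{exactly the same} family of matrices $\M^n_{\alpha,\beta}$, over the same range $\max(p,q)\le n\le p+q$: in each case $\M^n_{\alpha,\beta}$ consists of the non-negative integer matrices with column sums $(n-p,a_1,\dots,a_r)$, row sums $(n-q,b_1,\dots,b_s)$, and $m_{00}=0$. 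The only difference between the two statements is bookkeeping: the non-commutative formula records the non-zero entries of a matrix $M$ as the \emph{ordered} composition $c(M)$, while the commutative formula records the same entries as the \emph{sorted} partition $p(M)$.

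First I would apply the linear map $\pi$ to the identity of Theorem~\ref{T:smash-X}. Since $\pi$ is linear it distributes over the double sum, and by its defining rule $\pi(X_\gamma)=h_{\widetilde\gamma}$ we obtain
\begin{equation*}
  \pi(X_\alpha\smashprod X_\beta)
  =
  \sum_{n=\max(p,q)}^{p+q}\sum_{M\in\M^n_{\alpha,\beta}}\pi(X_{c(M)})
  =
  \sum_{n=\max(p,q)}^{p+q}\sum_{M\in\M^n_{\alpha,\beta}}h_{\widetilde{c(M)}}.
\end{equation*}
Next I would record the elementary but crucial identity $\widetilde{c(M)}=p(M)$: reordering the parts of the composition of non-zero entries of $M$ produces precisely the partition whose parts are those non-zero entries, and this is by definition $p(M)$. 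Substituting this, the right-hand side becomes $\sum_n\sum_{M\in\M^n_{\alpha,\beta}}h_{p(M)}$, which is exactly the expression given by Theorem~\ref{T:lambda-smash-combinatorial} for $h_\alpha\smashprod h_\beta$. Finally, since the $h_i$ commute in $\Lambda$ one has $h_\alpha=h_{\widetilde\alpha}$ and $h_\beta=h_{\widetilde\beta}$, so this equals $h_{\widetilde\alpha}\smashprod h_{\widetilde\beta}$, which is the desired conclusion.

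The proof has essentially no obstacle beyond two verifications that are mechanical once one has both theorems in hand: that the two matrix sets $\M^n_{\alpha,\beta}$ appearing in Theorems~\ref{T:smash-X} and~\ref{T:lambda-smash-combinatorial} coincide verbatim, and that $\pi$ precisely performs the passage from the composition label $c(M)$ to its partition reordering $p(M)=\widetilde{c(M)}$. In other words, all the genuine content of this compatibility was already absorbed into the proofs of those two explicit formulas; the present theorem is the clean combinatorial shadow obtained by forgetting the ordering of the non-zero matrix entries, and its proof amounts to making that forgetting precise.
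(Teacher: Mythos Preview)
Your proof is correct and follows essentially the same approach as the paper: compare the explicit formula~\eqref{E:smash-X} for $X_\alpha\smashprod X_\beta$ with the explicit formula~\eqref{E:smash-complete-homogeneous} for the commutative side, and observe that $\pi$ turns $c(M)$ into $p(M)=\widetilde{c(M)}$. The only minor difference is that the paper compares with $h_{\widetilde\alpha}\smashprod h_{\widetilde\beta}$ by building a bijection $\M^n_{\alpha,\beta}\to\M^n_{\widetilde\alpha,\widetilde\beta}$ (permuting rows and columns), whereas you apply Theorem~\ref{T:lambda-smash-combinatorial} directly to the compositions $\alpha,\beta$ and then invoke $h_\alpha=h_{\widetilde\alpha}$; your route is a harmless shortcut that avoids naming the bijection.
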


\begin{proof}
  Comparing the explicit formul\ae --see equations~\eqref{E:smash-complete-homogeneous}
  and~\eqref{E:smash-X}--  it is clear that it is enough to construct for each $n$ a bijection
  $\psi:\M_{\alpha,\beta}^n\to\M_{\widetilde\alpha,\widetilde\beta}^n$, such that
  for all $M\in\M_{\alpha,\beta}^n$:

\begin{equation}
    \label{E:reordering-bijection}
    p\bigl( \psi(M) \bigr)
    =
    \widetilde{c(M)}
  \end{equation}

  Let $\sigma$ and $\tau$ be two permutations which reorder 
 into partitions the
  compositions $\alpha=(a_1,\dots,a_r)$ and $\beta=(b_1,\dots,b_s)$,
  respectively,  i.e.:
  \begin{equation*}
    \widetilde\alpha = (a_{\sigma(1)},\ldots,a_{\sigma(r)}), 
    \quad
    \widetilde\beta = (b_{\tau(1)},\ldots,b_{\tau(r)}).
  \end{equation*}
  Define $\psi(M)$ as the matrix obtained from $M$ by permuting its
  columns with the permutation $\Id_1\times\sigma$ and its rows with
  the permutation $\Id_1\times\tau$.  Clearly $\psi(M)$ is in 
  $\M_{\widetilde\alpha,\widetilde\beta}^n$ and the map $\psi$ is  a bijection.
  
Moreover, since the (unordered) entries of $M$ and $\psi(M)$ are the same, we
  get Equation~\eqref{E:reordering-bijection}.
\end{proof}


            \section{Compatibility of the coproduct with the Heisenberg product} 
\label{S:hopf}
In various of the spaces depicted in the table in Figure \ref{F:spaces}, coproducts can be introduced, and frequently they are compatible with the products we are considering. For brevity we concentrate in the consideration of the compatibility of the coproducts with the Heisenberg product at the level of permutations, of non commutative and of commutative symmetric functions. We start by considering a coproduct that is called the convolution (or external) coproduct.

We start by defining this coproduct at the level of  $\Sinfty$. 

Let $n$ be a positive integer and decompose it as $n=p+q$. 
Then, as ${\mathrm {Sh}}^{-1}(p,q)$ is a set of 
representatives for the left cosets of
${S}_p\times {S}_q \subseteq {S}_n$, given $\sigma\in {S}_{n}$ there is a unique triple
$(\xi,\sigma_p,\sigma'_q)$ such that: $\xi\in{\mathrm {Sh}}(p,q)$,
$\sigma_p\in{S}_p$, $\sigma'_q \in{S }_q$, and
\begin{equation}\label{E:parabolic}
 \sigma=(\sigma_p\times \sigma'_q )\xi^{-1}\,.
\end{equation}

The coproduct $\Delta:\Sinfty\to \Sinfty\otimes
\Sinfty$ is defined on $\sigma\in{S}_n$ as
\begin{equation} \label{E:coprodper}
\Delta(\sigma)=\sum_{p=0}^n \sigma_p \otimes 
\sigma'_q,
\end{equation}where $\sigma_p$ and $\sigma'_q$ are as
in~\eqref{E:parabolic}.
For instance : $\Delta(52413)=(\,\,)\otimes (52413)+(1)\otimes (4132)+(21)\otimes (321)+(231)\otimes
(21)+(2413)\otimes (1)+(52413)\otimes (\,\,)
$.

If follows directly from the formula \eqref{E:coprodper} above (see \cite{malvenuto93}) that:
\begin{equation}
  \label{D:coproduct-sigma}
  \Delta(X_{(a_1,\ldots,a_r)})
  =
  \sum_{\substack{b_i+c_i=a_i\\0\le b_i,c_i \le a_i}}
  X_{\pseudopartition{(b_1,\dots,b_r)}}\otimes X_{\pseudopartition{(c_1,\dots,c_r)}},
\end{equation}
where for a pseudopartition $\alpha$, $\alpha\pseudopartition{}$  indicates that parts equal to zero have been omitted.  

It is then clear that the comultiplication $\Delta$ can be restricted to the space of descents: i.e. that \[\Delta(\Sigma) \subseteq \Sigma \otimes  \Sigma.\] 

\begin{remark}
  \label{R:coproduct-in-S}
  
In \cite[Th\`eor\'eme 5.3, Remarque 5.15]{malvenuto93} it is proved that
equipped with the convolution product and the above coproduct,
$\Sinfty$ becomes a graded connected Hopf
algebra and it is also shown that $\Delta$ 
is not compatible with the composition of permutations. Also in~\cite{aguiar05}  a more recent and detalied study of $(\Sinfty, \externalperm, \Delta)$ is presented. 

Taking into account that the Heisenberg product, interpolates between the convolution (or Malvenuto--Reutenauer) and the composition product at the level of the  permutations, we cannot expect it to be compatible with the coproduct considered above. 

These operations are better behaved if we restrict our attention to the non commutative symmetric functions.
\end{remark}

Next we prove that $\Delta$ is compatible with the Heisenberg product in
$\Sigma$, and in particular this implies that for descents the composition product \emph{is} compatible with the comultiplication.

\begin{theorem}
  \label{T:sigma-is-smash-hopf}
  The space $(\Sigma,\smashprod,\Delta)$ is a cocommutative Hopf
  algebra.
\end{theorem}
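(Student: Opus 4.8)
The plan is to establish the Hopf algebra structure by verifying three separate things: that $(\Sigma,\smashprod,\iota\varepsilon)$ is an associative unital algebra, that $(\Sigma,\Delta)$ is a coassociative cocommutative coalgebra, and that the comultiplication $\Delta$ is an algebra morphism with respect to the Heisenberg product $\smashprod$. The first point is already available: associativity and the unit come from Theorem~\ref{T:smash-sigma} together with the fact that $\Sigma$ sits inside $\Sinfty$, where $\smashprod$ is associative (being the transport via $\Psi$ of the associative Heisenberg product on $\gradedEnd(T(V))$ from Section~\ref{S:endomorphisms}). Coassociativity and cocommutativity of $\Delta$ are inherited from the convolution coalgebra structure on $\Sinfty$ discussed in Remark~\ref{R:coproduct-in-S}, restricted to $\Sigma$ via $\Delta(\Sigma)\subseteq\Sigma\otimes\Sigma$. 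So the genuine content of the theorem is the compatibility $\Delta(f\smashprod g)=\Delta(f)\smashprod\Delta(g)$, where the product on the right is the componentwise Heisenberg product on $\Sigma\otimes\Sigma$.

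The cleanest route to this compatibility is, I expect, to work combinatorially with the explicit basis. I would compute both sides of
\[
  \Delta(X_\alpha\smashprod X_\beta)
  =
  \Delta(X_\alpha)\smashprod\Delta(X_\beta)
\]
using the structure constants supplied by Theorem~\ref{T:smash-X}, which expresses $X_\alpha\smashprod X_\beta$ as a sum of $X_{c(M)}$ over matrices $M\in\M^n_{\alpha,\beta}$, and the explicit coproduct formula~\eqref{D:coproduct-sigma}, which splits each $X_{(a_1,\dots,a_r)}$ according to entrywise decompositions $b_i+c_i=a_i$. On the left, I would first run the product and then apply $\Delta$, which amounts to decomposing $c(M)$ entrywise. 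On the right, I would first split $\alpha$ into $(b_i)$ and $(c_i)$ and $\beta$ analogously, then form the two Heisenberg products in each tensor slot, producing a double family of matrices. The goal is to exhibit a bijection between the index set $\{(M,\text{splitting of }c(M))\}$ on the left and the index set $\{(\text{splitting of }\alpha,\text{splitting of }\beta, M',M'')\}$ on the right that matches the resulting basis elements $X_{(-)}\otimes X_{(-)}$.

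The key combinatorial idea I would pursue is that splitting the composition $c(M)$ into two pieces corresponds precisely to splitting the matrix $M$ itself: each nonzero entry $m_{ij}$ contributing a part to $c(M)$ gets partitioned as $m_{ij}=m'_{ij}+m''_{ij}$, and this partition of the whole matrix into $M=M'+M''$ (entrywise) should simultaneously induce the required splittings of the row sums and column sums, i.e. of $\alpha$ and $\beta$, and identify $M'$, $M''$ with the matrices governing the two tensor factors on the right-hand side. I would check that $c(M')$ and $c(M'')$ read off in the prescribed left-to-right, top-to-bottom order reproduce the two halves of the split $c(M)$, which is where the ordering convention in Theorem~\ref{T:smash-X} must be used carefully.

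\textbf{The main obstacle} will be precisely this bookkeeping of orderings: the parts of $c(M)$ are read in a fixed scan order of the matrix, and when $M$ is split as $M'+M''$ one must confirm that the induced splitting of $c(M)$ is exactly the entrywise splitting appearing in~\eqref{D:coproduct-sigma}, and that no part is created or lost when an entry $m_{ij}$ splits into one zero and one nonzero piece (the pseudocomposition convention in~\eqref{D:coproduct-sigma} omits zero parts, which must be reconciled with $c(M')$, $c(M'')$ dropping vanishing entries). Verifying that the ranges of the summation indices match on both sides — in particular that the interpolation range $\max(p,q)\le n\le p+q$ is respected compatibly after splitting — is the delicate step; once the bijection of indices is set up and seen to preserve the attached basis tensors, the equality of the two sides follows by linearity.
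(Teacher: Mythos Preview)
Your proposal is correct and follows essentially the same route as the paper: both sides are expanded via Theorem~\ref{T:smash-X} and formula~\eqref{D:coproduct-sigma}, and the equality is established by the bijection between entrywise splittings $M=M'+M''$ of a matrix in $\M_{\alpha,\beta}^n$ and pairs $(M',M'')$ indexing the product of coproducts (the paper phrases the bijection in the inverse direction, sending an octuple $(\gamma,\gamma',\delta,\delta',n,n',M,M')$ to the quadruple $(n+n',M+M',c(M),c(M'))$). The obstacle you flag---tracking zero parts so that $c(M')$, $c(M'')$ match the pseudocomposition halves of $c(M)$ and so that the matrices have compatible shapes for addition---is real but routine once one works with pseudocompositions of fixed length throughout, exactly as the paper (somewhat implicitly) does.
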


\begin{proof}
  It is enough to prove the Heisenberg--multiplicativity of $\Delta$ on elements of the form $X_\alpha$ with $\alpha$ a composition of $p$ for different $p$'s.    

Let $\alpha$ and $\beta$ be compositions
  of $p$ and $q$, respectively.  We use Formula~\eqref{E:smash-X} to
  compute
  \begin{equation}
    \label{E:coproduct-of-smash}
    \Delta(X_\alpha\smashprod X_\beta)
    =
    \sum_{n}
    \sum_{M\in\M_{\alpha,\beta}^n}
    \Delta( X_{c(M)} )
    =
    \sum_{n=\operatorname{max}(p,q)}^{p+q}
    \sum_{M\in\M_{\alpha,\beta}^n}
    \sum_{\gamma+\gamma'=c(M)}
    X_\gamma\otimes X_{\gamma'}.
  \end{equation}
  On the other hand,
  \begin{equation}
    \label{E:smash-of-coproducts}
    \begin{aligned}
      \Delta( X_\alpha ) \smashprod \Delta( X_\beta )
      &=
      \biggl(
      \sum_{\gamma + \gamma'=\alpha} X_\gamma\otimes X_{\gamma'}
      \biggr) 
      \smashprod
      \biggl(
      \sum_{\delta + \delta'=\beta} X_\delta\otimes X_{\delta'}
      \biggr)
      \\
      &=
      \sum_{\substack{\gamma+\gamma'=\alpha\\\delta+\delta'=\beta}}
      (X_\gamma\smashprod X_\delta) \otimes
      (X_{\gamma'}\smashprod X_{\delta'})
      \\
      &=
      \sum_{\substack{\gamma+\gamma'=\alpha\\\delta+\delta'=\beta}}
      \sum_{n,n'}
      \sum_{\substack{M\in\M^n_{\delta,\gamma}\\M'\in\M^{n'}_{\delta',\gamma'}}}
      X_{c(M)}\otimes X_{c(M')}.
    \end{aligned}
  \end{equation}
  We show that the sums~\eqref{E:coproduct-of-smash}
  and~\eqref{E:smash-of-coproducts} are the same as follows: take an
  octuple of indices corresponding to the
  sum~\eqref{E:smash-of-coproducts}:
  $(\gamma,\gamma',\delta,\delta',n,n',M,M')$ and construct the
  quadruple $\bigl(n+n', M+M', c(M), c(M')\bigr)$.  Denote by
  $\col{M}$ ($\row{M}$) the vector whose entries are the sum of the columns (rows) of the
  matrix $M$.  Since
  \begin{align*}
    \col{M+M'} 
    = 
    \col{M}+\col{M'} 
    =
    \bigl(n-|\gamma|\bigr)\gamma + \bigl(n'-|\gamma'|\bigr)\gamma'
    = 
    ( n+n' - p)\alpha,
  \end{align*}
  where $|\zeta|$ is the sum of the parts of a composition $\zeta$,
  and similarly with $\row{M+M'}=(n+n'-q)\beta$, we see that $M+M'\in
  \M^n_{\alpha,\beta}$.  As $c(M)+c(M')=c(M+M')$,  if we set
  \begin{equation*}
    (\widetilde n, \widetilde M, \widetilde\gamma, \widetilde{\gamma'})
    =\bigl(n+n', M+M', c(M), c(M')\bigr)
  \end{equation*}
  it is clear that $(\widetilde n,\widetilde M,\widetilde\gamma,
  \widetilde{\gamma'})$ is a quadruple of indices appearing in the
  sum~\eqref{E:coproduct-of-smash} and that the corresponding summands
  of~\eqref{E:coproduct-of-smash} and~\eqref{E:smash-of-coproducts}
  are the same.

  Moreover, it is clear that the above correspondence between the
  indices of the sums is bijective.
\end{proof}

Consider now the space of symmetric functions $\Lambda$ and the proyection $\pi: \Sigma \rightarrow \Lambda$. In Subsections \ref{SS:complete-homogeneous} and \ref{SS:powersums}, equations \eqref{E:coproductoncompletehom} and \eqref{E:smash-power-sums} we defined a coproduct on $\Lambda$ similar to the one defined above and dual to the external product. We then gave   its expression on the natural basis of complete homogeneous functions and power sums. 

In particular we have that --see the notations of Subsection \ref{SS:complete-homogeneous} and \ref{SS:powersums}--: $ \Delta(h_a)=\sum_{i+j=a}h_i \otimes h_j$ and $\Delta(p_n) = 1\otimes p_n+p_n\otimes 1$.

Being $\pi(X_\alpha)=h_\alpha$, and using the first of the above equalities and Theorem~\ref{T:sigma-is-smash-hopf},  we conclude the following result.

\begin{corollary}
  The space of symmetric functions $\sym$ equipped with the operations 
$(\smashprod,\Delta)$ is a
  cocommutative Hopf algebra and the map $\pi:\Sigma \rightarrow \Lambda$  is a morphism of Hopf algebras.
\end{corollary}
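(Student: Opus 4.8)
The plan is to show that $\pi\colon\Sigma\epi\Lambda$ is a morphism of Hopf algebras and then to read off the asserted structure on $\Lambda$ by transport along this surjection. By Theorem~\ref{T:sigma-is-smash-hopf} we already know that $(\Sigma,\smashprod,\Delta)$ is a cocommutative Hopf algebra, and by the preceding theorem $\pi$ is multiplicative for the Heisenberg products, $\pi(X_\alpha\smashprod X_\beta)=h_{\widetilde\alpha}\smashprod h_{\widetilde\beta}$. Since $\pi$ is surjective, degree preserving (hence compatible with the counits) and carries the degree-zero unit of $(\Sigma,\smashprod)$ to $1=h_{\emptyset}$, the only point of substance is that $\pi$ intertwines the two coproducts; once this is in hand, the whole Hopf structure on $\Lambda$ is inherited from that of $\Sigma$.

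First I would verify comultiplicativity, $(\pi\otimes\pi)\circ\Delta=\Delta\circ\pi$, on the basis $(X_\alpha)$. Fix $\alpha=(a_1,\dots,a_r)$. Formula~\eqref{D:coproduct-sigma} gives $\Delta(X_\alpha)=\sum X_{(b_1,\dots,b_r)}\otimes X_{(c_1,\dots,c_r)}$, the sum running over all $b_i+c_i=a_i$; applying $\pi\otimes\pi$ and using $\pi(X_\gamma)=h_\gamma$ (with $h_0=1$, so that omitting zero parts is harmless) yields $\sum h_{(b_1,\dots,b_r)}\otimes h_{(c_1,\dots,c_r)}$. On the other side, $\pi(X_\alpha)=h_\alpha=h_{a_1}\cdots h_{a_r}$, and since $\Delta$ is an algebra morphism of $\Lambda$ for the external product, together with~\eqref{E:coproductoncompletehom} we get
\[
\Delta(h_\alpha)=\prod_{i=1}^{r}\Delta(h_{a_i})
=\prod_{i=1}^{r}\Bigl(\sum_{b_i+c_i=a_i}h_{b_i}\otimes h_{c_i}\Bigr)
=\sum_{b_i+c_i=a_i} h_{(b_1,\dots,b_r)}\otimes h_{(c_1,\dots,c_r)},
\]
which is the same sum. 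Hence $\pi$ is a coalgebra map, and combined with its $\smashprod$-multiplicativity it is a morphism of bialgebras.

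Finally I would transport the structure. Because $\pi$ is a surjective bialgebra morphism and $\pi\otimes\pi$ is again $\smashprod$-multiplicative (the Heisenberg product on tensor squares being componentwise), the Heisenberg bialgebra axioms on $\Lambda$ follow by lifting any $u,v\in\Lambda$ to preimages in $\Sigma$ and pushing the corresponding identity forward through $\pi$; cocommutativity likewise descends, being preserved by the surjective coalgebra map $\pi$. It remains to produce an antipode on $\Lambda$, and this is the one step I expect to require genuine care: the product $\smashprod$ is \emph{not} graded but only filtered, $h_\alpha\smashprod h_\beta\in\bigoplus_{n\le p+q}\Lambda_n$, so the plain ``graded connected bialgebra'' argument does not apply verbatim. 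However $\Delta$ \emph{is} graded and $\Lambda_0=\field$, so $\Lambda$ is a connected filtered bialgebra; the reduced coproduct lowers the coalgebra grading, so the recursion $S(x)=-x-\sum S(x_1)x_2$ is well founded, and since the Heisenberg product respects the filtration this defines an antipode on $\Lambda$. Equivalently, $\ker\pi$ is a bi-ideal of the Hopf algebra $\Sigma$ and hence a Hopf ideal, so $S_\Sigma$ descends. This shows that $(\Lambda,\smashprod,\Delta)$ is a cocommutative Hopf algebra and that $\pi$ is a morphism of Hopf algebras.
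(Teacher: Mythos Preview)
Your proof is correct and follows essentially the same route as the paper: both verify that $\pi$ is a coalgebra map on the basis $(X_\alpha)$ via the formula $\Delta(h_\alpha)=\sum h_{\alpha_1}\otimes h_{\alpha_2}$, and then push the Hopf structure of $(\Sigma,\smashprod,\Delta)$ forward through the surjection. Your treatment is in fact more careful than the paper's, which does not address the existence of the antipode; your observation that $\smashprod$ is only filtered (not graded) and your two remedies---the connected filtered bialgebra recursion, or the descent of $S_\Sigma$ through the Hopf ideal $\ker\pi$---fill a genuine gap in the paper's brief argument.
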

\begin{proof} We have that: 
\[\Delta(\pi(X_\alpha))=\Delta(h_\alpha)=\sum h_{\alpha_1} \otimes h_{\alpha_2}=\sum \pi(X_{\alpha_1}) \otimes \pi(X_{\alpha_2})=(\pi \otimes \pi)(\Delta(X_\alpha)).\]

Since $\pi$ is Heisenberg multiplicative, we conclude that the coproduct and the Heisenberg product are compatible in the space $\Sigma$. The compatibility of the coproduct and the Heisenberg product in $\Sigma$ induces the compatibility in $\Lambda$.
\end{proof}

This theorem generalizes known results on $(\sym,\externalsym,\Delta)$ and $(\sym,\internalsym,\Delta)$ --see for example \cite{geissinger77}--.



            \section{Isomorphisms between the Heisenberg, convolution, and composition
  products}
\label{S:isomorphisms}

We have mentioned until now, two kinds of relations between the three products we have been dealing with: one is the interpolation connection and the other the formula appearing in Lemma \ref{L:zelevinski-identity}.

In the spaces of symmetric functions there are further relations between the Heisenberg, internal and external products. 

First we show that the
external and Heisenberg products are isomorphic (in the commutative and non--commutative situations), but the isomorphism is
not degree-preserving. 

Similarly we prove that the Heisenberg and internal products are isomorphic in the commutative context --degrees not preserved--, but the isomorphism is only valid in the completion of the space.   

\begin{theorem}
  \label{T:iso-sigma}
  The map $\psi:(\Sigma,\externalsym,\Delta)\to
  (\Sigma,\smashprod,\Delta)$ given by
  \begin{equation}
    \label{E:iso-sigma}
    \psi(X_{(a_1,\ldots,a_r)})
    =
    X_{(a_1)}\smashprod\cdots\smashprod X_{(a_r)}
  \end{equation}
  is an isomorphism of Hopf algebras (which does not preserve the gradings).
\end{theorem}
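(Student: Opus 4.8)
The plan is to check three things separately: that $\psi$ is multiplicative for the pair $(\externalsym,\smashprod)$, that it is a morphism of coalgebras for $\Delta$, and that it is bijective. The antipode is then automatically preserved, since any bijective bialgebra morphism between Hopf algebras is a Hopf algebra isomorphism.

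Multiplicativity is almost formal. Because the external product is concatenation of compositions, $X_\alpha\externalsym X_\beta=X_{\alpha\beta}$ by~\eqref{E:con-X}, where $\alpha\beta$ is the concatenation. Hence for $\alpha=(a_1,\dots,a_r)$ and $\beta=(b_1,\dots,b_s)$ one has $\psi(X_\alpha\externalsym X_\beta)=X_{(a_1)}\smashprod\cdots\smashprod X_{(a_r)}\smashprod X_{(b_1)}\smashprod\cdots\smashprod X_{(b_s)}$, which equals $\psi(X_\alpha)\smashprod\psi(X_\beta)$ by associativity of the Heisenberg product. Thus $\psi$ is an algebra map that fixes each generator $X_{(a)}$ and sends the unit to the unit.

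For bijectivity I would argue by triangularity with respect to the grading, using the interpolation property (Proposition~\ref{P:interpolation}). If $\alpha\composition n$, then the factors $X_{(a_i)}$ have degrees $a_i$, so by~\eqref{E:smash-degree} the element $\psi(X_\alpha)=X_{(a_1)}\smashprod\cdots\smashprod X_{(a_r)}$ lies in $\bigoplus_{m\le n}\Sigma_m$. Viewing the filtration $F^{\le N}=\bigoplus_{m\le N}\Sigma_m$ as a filtration of the algebra $(\Sigma,\smashprod)$ whose associated graded product is the external product, the degree-$n$ (top) component of $\psi(X_\alpha)$ is the iterated external product $X_{(a_1)}\externalsym\cdots\externalsym X_{(a_r)}=X_\alpha$. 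Therefore $\psi$ preserves $F^{\le N}$ and, in the basis $\{X_\alpha\}$ ordered by increasing degree, is unitriangular with leading term $X_\alpha$. Each restriction $\psi|_{F^{\le N}}$ is then an isomorphism of the finite-dimensional space $F^{\le N}$, and since $\Sigma=\bigcup_N F^{\le N}$, the map $\psi$ is bijective.

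For the coproduct I would use that $(\Sigma,\externalsym)$ is generated by the elements $X_{(a)}$. Both $\Delta\circ\psi$ and $(\psi\otimes\psi)\circ\Delta$ are algebra morphisms from $(\Sigma,\externalsym)$ to $(\Sigma\otimes\Sigma,\smashprod\otimes\smashprod)$: the first because $\Delta$ is $\smashprod$-multiplicative by Theorem~\ref{T:sigma-is-smash-hopf} and $\psi$ is $(\externalsym,\smashprod)$-multiplicative, the second because $\Delta$ is $\externalsym$-multiplicative for the standard Hopf structure and $\psi\otimes\psi$ is multiplicative for the corresponding tensor products. It therefore suffices to check equality on the generators $X_{(a)}$, where $\psi$ is the identity and~\eqref{D:coproduct-sigma} gives $\Delta(X_{(a)})=\sum_{b+c=a}X_{\pseudopartition{(b)}}\otimes X_{\pseudopartition{(c)}}$; applying $\psi\otimes\psi$ fixes each tensorand, so both sides coincide. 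I expect the main obstacle to be the bijectivity step: since $\psi$ does not preserve degree, one must set up the degree filtration carefully and correctly identify the top-degree component of an iterated Heisenberg product as the iterated external product, which is exactly the content of the interpolation property combined with associativity and the multiplicativity of the filtration.
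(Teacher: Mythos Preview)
Your argument is correct and follows the same overall strategy as the paper: multiplicativity from concatenation~\eqref{E:con-X}, bijectivity from unitriangularity of $\psi$ in the basis $\{X_\alpha\}$ (the top-degree component of an iterated Heisenberg product being the external product), and comultiplicativity by reduction to the generators $X_{(a)}$.

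The one noticeable difference is in the comultiplicativity step. You argue cleanly that $\Delta\circ\psi$ and $(\psi\otimes\psi)\circ\Delta$ are both algebra maps $(\Sigma,\externalsym)\to(\Sigma\otimes\Sigma,\smashprod\otimes\smashprod)$, invoking Theorem~\ref{T:sigma-is-smash-hopf} for the first and the classical Hopf structure for the second, and then check equality on $X_{(a)}$, where it is immediate since $\psi$ fixes single-part basis elements. The paper instead verifies the identity $\Delta(\psi(X_{(a_1)})\smashprod\psi(X_{(a_2)}))=(\psi\otimes\psi)\Delta(X_{(a_1,a_2)})$ by a direct computation using formula~\eqref{E:smash-X}, expanding both sides explicitly and matching terms. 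Your route is shorter and more conceptual; the paper's computation has the advantage of being self-contained and of exhibiting concretely how the combinatorial formula~\eqref{E:smash-X} interacts with the coproduct. Your final remark that bijectivity is the ``main obstacle'' is slightly misplaced: as your own argument shows, the triangularity is immediate once one knows the interpolation property, and the paper treats it in a single sentence.
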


\begin{proof}
  Since the Heisenberg product has the external product as the only term
  in the upper degree, the matrix of the linear map $\psi$ in the
  basis of the $X_\alpha$'s is triangular with 1 in the diagonal.  Hence $\psi$ 
  is invertible and it is multiplicative because the
  external product in the basis of the $X_\alpha$'s is the concatenation
  of the compositions.

  We finish by proving that $\psi$ is comultiplicative:
  \begin{equation}
    \label{E:psi-comultiplicative}
    \Delta\bigl(\psi(X_\alpha)\smashprod \psi(X_\beta)\bigr) 
    =
    (\psi\otimes\psi)\Delta(X_\alpha\externalsym X_\beta).
  \end{equation}
  Clearly, it is
  enough to prove~\eqref{E:psi-comultiplicative} on the algebra
  generators $X_{(a)}$ for all non-negative integers $a$.  For the right hand
  side of~\eqref{E:psi-comultiplicative} we have:
  \begin{equation*}
    \Delta(X_{(a_1)}\externalsym X_{(a_2)})
    =
    \Delta(X_{(a_1,a_2)})
    =
    \sum_{\substack{a+b=a_1\\a'+b'=a_2}}
    X_{\pseudopartition{(a,a')}}\otimes X_{\pseudopartition{(b,b')}}.
  \end{equation*}
  Applying the map $\psi\otimes\psi$ and using
  formula~\eqref{E:smash-X} to compute $\psi(X_{(a,a')}) =
  X_{(a)}\smashprod X_{(a')}$ and $\psi(X_{(b,b')}) =
  X_{(b)}\smashprod X_{(b')}$ (note that we assume $\psi(X_{(0)})$ to
  be the identity) we get
  \begin{equation}
    \label{E:psi-comultiplicative-RHS}
    (\psi\otimes\psi)\Delta(X_{\pseudopartition{(a_1,a_2)}})
    =
    \sum_{\substack{a+b=a_1\\a'+b'=a_2}}
    \sum_{n,m}
    X_{\pseudopartition{(n-a',n-a,a+a'-n)}}\otimes X_{\pseudopartition{(m-b',m-b,b+b'-m)}}.
  \end{equation}
  On the other hand, taking into account that
  $\psi(X_{\alpha})=X_\alpha$ for partitions with only one part, the
    left hand side of~\eqref{E:psi-comultiplicative} is:
  \begin{equation}
    \label{E:psi-comultiplicative-LHS}
    \Delta\bigl(X_{(a_1)}\smashprod X_{(a_2)}\bigr)
    =
    \sum_k
    \sum_{
      \substack{c_1+c_1'=k-a_2\\
        c_2+c'_2=k-a_1\\
        c_3+c'_3=a_1+a_2-k}}
    X_{\pseudopartition{(c_1,c_2,c_3)}}\otimes X_{\pseudopartition{(c_1',c_2',c_3')}},
  \end{equation}
  By collecting together the terms
  in~\eqref{E:psi-comultiplicative-RHS} with $n+m=k$ and interchanging
  the sums, it is easy to see that~\eqref{E:psi-comultiplicative-RHS}
  and~\eqref{E:psi-comultiplicative-LHS} are the same expression.
\end{proof}

\begin{corollary}
  \label{C:iso-sigma}
  The map $(\sym,\externalsym,\Delta)\to (\sym,\smashprod,\Delta)$ given by
  \begin{equation}
    \label{E:iso-sigma-sym}
    h_{(a_1,\ldots,a_r)}
    \mapsto 
    h_{(a_1)}\smashprod\cdots\smashprod h_{(a_r)}
  \end{equation}
  is an isomorphism of Hopf algebras (which does not preserve the gradings).
\end{corollary}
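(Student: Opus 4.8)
The plan is to deduce the corollary from Theorem~\ref{T:iso-sigma} by descending along the surjection $\pi\colon\Sigma\epi\Lambda$. Write $\psi_\Sigma$ for the Hopf isomorphism of that theorem and $\psi_\Lambda$ for the map~\eqref{E:iso-sigma-sym} under study. The relevant facts about $\pi$ are already available: by the theorem of Section~\ref{S:nc-to-commutative} and the Corollary of Section~\ref{S:hopf}, $\pi$ is a morphism of Hopf algebras for both $(\externalsym,\Delta)$ and $(\smashprod,\Delta)$, and $\pi(X_{(a)})=h_{(a)}$.

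The first step is to check that $\psi_\Lambda$ is well defined. Since in $\Lambda$ the symmetric function $h_{(a_1,\dots,a_r)}$ depends only on the underlying partition, I must verify that $h_{(a_1)}\smashprod\cdots\smashprod h_{(a_r)}$ is independent of the order of the parts; equivalently, that $\smashprod$ is commutative on $\Lambda$. This is immediate from~\eqref{E:smash-complete-homogeneous}: transposition $M\mapsto M^{\mathsf{T}}$ is a bijection $\M_{\alpha,\beta}^n\to\M_{\beta,\alpha}^n$ that preserves the partition $p(M)$, whence $h_\alpha\smashprod h_\beta=h_\beta\smashprod h_\alpha$; together with associativity this gives the required order-independence. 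This commutativity is the only genuinely new input beyond Theorem~\ref{T:iso-sigma}, and it is precisely what makes the induced map well defined (the analogous statement fails in $\Sigma$, which is why $\psi_\Sigma$ had to be defined on compositions).

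Next I would establish that the square $\pi\circ\psi_\Sigma=\psi_\Lambda\circ\pi$ commutes. Evaluating on $X_\alpha$, the left-hand side equals $\pi(X_{(a_1)}\smashprod\cdots\smashprod X_{(a_r)})=h_{(a_1)}\smashprod\cdots\smashprod h_{(a_r)}$ because $\pi$ is multiplicative for $\smashprod$, while the right-hand side is $\psi_\Lambda(h_{\widetilde\alpha})$, and the two agree by the commutativity just proved. With the square in hand, the multiplicativity and comultiplicativity of $\psi_\Lambda$ transfer formally from those of $\psi_\Sigma$ using surjectivity of $\pi$. For comultiplicativity one computes $\Delta\,\psi_\Lambda\,\pi=\Delta\,\pi\,\psi_\Sigma=(\pi\otimes\pi)\,\Delta\,\psi_\Sigma=(\pi\otimes\pi)(\psi_\Sigma\otimes\psi_\Sigma)\,\Delta=(\psi_\Lambda\otimes\psi_\Lambda)(\pi\otimes\pi)\,\Delta=(\psi_\Lambda\otimes\psi_\Lambda)\,\Delta\,\pi$, and cancelling the surjection $\pi$ on the right yields $\Delta\,\psi_\Lambda=(\psi_\Lambda\otimes\psi_\Lambda)\,\Delta$; the same chain with $\externalsym\to\smashprod$ in place of the coproducts gives multiplicativity.

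Finally I would verify invertibility exactly as in Theorem~\ref{T:iso-sigma}. By the interpolation property the top-degree component of $h_{(\ell_1)}\smashprod\cdots\smashprod h_{(\ell_k)}$ is the external product $h_{(\ell_1)}\externalsym\cdots\externalsym h_{(\ell_k)}=h_\lambda$, while every remaining summand has strictly smaller degree. Hence, with the basis $(h_\lambda)$ filtered by degree, the matrix of $\psi_\Lambda$ is unitriangular, so $\psi_\Lambda$ is a linear isomorphism. Combined with the Hopf-morphism properties above, this shows $\psi_\Lambda$ is an isomorphism of Hopf algebras, manifestly not respecting the grading. I expect the commutativity of $\smashprod$ on $\Lambda$ to be the only real point requiring attention; the rest is a routine transport through $\pi$ and the triangularity argument.
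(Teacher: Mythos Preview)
Your proposal is correct and is exactly the natural way to deduce the corollary from Theorem~\ref{T:iso-sigma}; the paper offers no proof at all for this corollary, so you have supplied the details the authors left implicit. Your observation that commutativity of $\smashprod$ on $\Lambda$ (via $M\mapsto M^{\mathsf T}$ on $\M_{\alpha,\beta}^n$) is the one nontrivial point needed to make the descent through $\pi$ work is spot on, and the remaining transport and triangularity arguments are routine.
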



The Heisenberg and internal products are also isomorphic at the level of
$\symcomplete$.




\begin{theorem}
  \label{T:smash-comp}
  The map $(\symcomplete,\smashprod)\to (\symcomplete,\internalsym)$ given by
  \begin{equation}
    \label{E:smash-comp}
    f\mapsto f\externalsym \sum_{n\geq 0} h_{(n)} 
  \end{equation}
  is an isomorphism of algebras.
\end{theorem}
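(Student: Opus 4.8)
The plan is to isolate the single element that governs the whole statement, namely
$\mathbf{h}:=\sum_{n\ge 0}h_{(n)}\in\symcomplete$, and to verify separately that
$\Phi\colon f\mapsto f\externalsym\mathbf{h}$ is a linear bijection and that it is multiplicative. First I would record three properties of $\mathbf{h}$. Since $h_{(n)}=\frobenius$ of the trivial $\S_n$--module (this is the case $\alpha=(n)$ of $\frobenius(\mathfrak h_\alpha)=h_\alpha$), the element $h_{(n)}$ is the two--sided unit of $\internalsym$ on $\sym^n$, so $\mathbf{h}$ is the unit of $(\symcomplete,\internalsym)$. From $\Delta(h_{(a)})=\sum_{i+j=a}h_{(i)}\otimes h_{(j)}$ one sees immediately that $\mathbf{h}$ is grouplike, $\Delta(\mathbf{h})=\mathbf{h}\otimes\mathbf{h}$ and $\varepsilon(\mathbf{h})=1$. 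Finally, since the external product is the ordinary multiplication of symmetric functions, the classical relation $\bigl(\sum_{r\ge0} h_{(r)}\bigr)\externalsym\bigl(\sum_{r\ge0}(-1)^r e_r\bigr)=1$ shows that $\mathbf{h}$ is $\externalsym$--invertible with inverse $\sum_{n\ge0}(-1)^n e_n$.

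Next I would settle that $\Phi$ is a well-defined linear bijection of $\symcomplete$. As $\externalsym$ is degree--additive, the degree $N$ component of $f\externalsym\mathbf{h}$ is the finite sum $\sum_{m\le N}f_m\externalsym h_{(N-m)}$, so $\Phi$ is a well-defined linear endomorphism; because $h_{(0)}=1$ one has $\Phi(f)=f+(\text{terms of strictly higher degree})$. More structurally, the $\externalsym$--invertibility of $\mathbf{h}$ from the previous step gives an explicit two-sided inverse $\Phi^{-1}(g)=g\externalsym\sum_{n\ge0}(-1)^n e_n$, so $\Phi$ is a linear isomorphism and it remains only to prove that it intertwines $\smashprod$ and $\internalsym$.

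The heart of the matter is the multiplicativity $\Phi(f\smashprod g)=\Phi(f)\internalsym\Phi(g)$, for which I would combine Zelevinski's identity (Lemma~\ref{L:zelevinski-identity}) with the classical distributivity of the internal over the external product, $(a\externalsym b)\internalsym c=\sum (a\internalsym c_1)\externalsym(b\internalsym c_2)$ with $\Delta(c)=\sum c_1\otimes c_2$ (see~\cite{macdonald95}). Expanding $\Phi(f)\internalsym\Phi(g)=(f\externalsym\mathbf{h})\internalsym(g\externalsym\mathbf{h})$ by this rule, computing $\Delta(g\externalsym\mathbf{h})=\sum (g_1\externalsym\mathbf{h})\otimes(g_2\externalsym\mathbf{h})$ from multiplicativity of $\Delta$ and grouplikeness of $\mathbf{h}$, and repeatedly absorbing the factors $\mathbf{h}\internalsym(-)$ via the unit property, collapses the right-hand side to $\sum (f_1\internalsym g_1)\externalsym f_2\externalsym g_2\externalsym\mathbf{h}$. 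On the other hand Zelevinski's identity gives $\Phi(f\smashprod g)=\sum f_1\externalsym(f_2\internalsym g_1)\externalsym g_2\externalsym\mathbf{h}$. The two are matched by the commutativity of $\externalsym$ together with the cocommutativity of $\Delta$ (valid because $\Delta(h_{(a)})$ is symmetric in its tensor factors): relabelling $f_1\leftrightarrow f_2$ carries one sum into the other. Consistently, the $\smashprod$--unit $1=h_{()}$ satisfies $\Phi(1)=\mathbf{h}$, the $\internalsym$--unit.

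I expect the multiplicativity computation to be the main obstacle, with two points deserving care. The first is invoking the external--internal distributivity in exactly the right Sweedler form and keeping the bookkeeping of the two coproducts $\Delta(f)$ and $\Delta(g)$ consistent through the collapse. The second is justifying that all the tools used—the Heisenberg product itself, Zelevinski's identity, and the distributivity law, all of which are stated in the graded space—pass to the completion $\symcomplete$; this is legitimate because every operation involved is controlled by degree (for instance $\sym^p\smashprod\sym^q\subseteq\bigoplus_{n=\max(p,q)}^{p+q}\sym^n$), so only finitely many terms contribute in each fixed degree and the identities extend term by term. Granting multiplicativity, the theorem follows at once since $\Phi$ has already been shown to be a linear bijection.
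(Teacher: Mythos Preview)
Your argument is correct, but it follows an entirely different path from the paper's. The paper derives the isomorphism by transporting the species--level natural isomorphism
\[
(\p\cdot\e)\times(\q\cdot\e)\ \cong\ (\p\smashprod\q)\cdot\e
\]
through the equivalence $\Sp\simeq\representations$, the Grothendieck group, and the Frobenius characteristic; since the exponential species $\e$ maps to $\sum_{n\ge 0}h_{(n)}$, the Cauchy product $(-)\cdot\e$ becomes $(-)\externalsym\mathbf{h}$ and the Hadamard product becomes $\internalsym$, so the algebra isomorphism drops out of a single categorical identity already established earlier.

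Your approach instead stays entirely inside $\symcomplete$: you isolate the three algebraic features of $\mathbf{h}$ (internal unit, grouplike, externally invertible) and then prove multiplicativity by combining Zelevinski's identity with the classical distributivity $(a\externalsym b)\internalsym c=\sum(a\internalsym c_1)\externalsym(b\internalsym c_2)$, finishing with commutativity of $\externalsym$, $\internalsym$ and cocommutativity of $\Delta$ to match the two Sweedler expressions. This is more elementary and self--contained --- it never leaves the level of symmetric functions and does not invoke the species framework --- at the cost of a slightly longer computation and of importing the distributivity law (which the paper does not otherwise need). The paper's proof, by contrast, is one line once the categorical groundwork is in place, and it explains \emph{why} the particular element $\sum_n h_{(n)}$ appears: it is the shadow of the exponential species. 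Your remarks on passage to the completion are adequate; all identities used are degree--controlled, so they extend componentwise.
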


\begin{proof}
  This isomorphism follows from the
  isomorphism~\eqref{E:isomorphism-smash-composition} in the category
  of species.  Note that the species $\e$ corresponds to the object
  $(\trivialrep_0, \trivialrep_1,\dots)$ in the category
  $\representations$, where $\trivialrep_n$ is the trivial
  $S_n$-module.  Applying the Grothendieck group construction and then
  the Frobenius map $\frobenius$, we deduce that $\e$ maps into
  the element $ \sum_{n\ge0} h_{(n)} $ in $\symcomplete$.
\end{proof}

\begin{remark}
  \label{r:conv-smash-comp}
  We give negative answers to three questions on possible extensions of the above results. 
\begin{enumerate}
\item The isomorphism $(\Sigma,\externalsym)\cong (\Sigma,\smashprod)$ of
  Theorem~\ref{T:iso-sigma} does \emph{not} extend to an isomorphism
  between $(\widehat{\Sigma},\externalsym)$ and $(\widehat{\Sigma},\smashprod)$.
  Indeed, in case it did extend: 
\begin{equation}\label{E:nonextensionpsi}
    \psi(X_{(1)}+X_{(1,1)}+X_{(1,1,1)}+\cdots)= X_{(1)}+X_{(1)}\smashprod X_{(1)}+X_{(1)}\smashprod
    X_{(1)}\smashprod X_{(1)} +\cdots,
  \end{equation}
  each of the terms in the infinite sum appearing in the right hand side of \eqref{E:nonextensionpsi}, contributes with a factor of degree
  $1$ (namely, $X_{(1)}$); this infinite sum is not a
  well-defined element of $\widehat{\Sigma}$.
\item The isomorphism $(\symcomplete,\smashprod)\cong
  (\symcomplete,\internalsym)$ of Theorem~\ref{T:smash-comp} does not
  restrict to an isomorphism between $(\sym,\smashprod)$ and
  $(\sym,\internalsym)$.  Indeed, the element $1\in\sym$ maps to
  $\sum_{n\geq 0} h_{(n)}$ which is in $\symcomplete$ but not in $\sym$.
\item A similar isomorphism
  to~\eqref{E:smash-comp} cannot be established at the level of $\widehat\Sigma=\prod_{n\ge
    0}\Sigma_n$. The maps $\varphi:f\mapsto f\externalsym
\sum_{n\ge0}X_{(n)} \quad\text{and}\quad\psi:f\mapsto 
\sum_{n\ge0}X_{(n)}\externalsym f$ are not isomorphisms between $(\widehat\Sigma,\smashprod)$ and
  $(\widehat\Sigma,\internalsym)$ because they are not not multiplicative.
  
Indeed, using the
  rule~\eqref{E:smash-X} we obtain that:
  \begin{equation*}
    X_{(3)}\smashprod X_{(3)}
    =
    X_{(3)}+X_{(1,1,2)}+X_{(2,2,1)}+X_{(3,3)}.
  \end{equation*}
  Then:
  \begin{equation}
    \label{E:counterexample-LHS}
    \varphi(X_{(3)}\smashprod X_{(3)})
    =
    \sum_{n\ge 0} X_{\pseudopartition{(3,n)}}+
    \sum_{n\ge0} X_{\pseudopartition{(1,1,2,n)}}+
    \sum_{n\ge0} X_{\pseudopartition{(2,2,1,n)}}+
    \sum_{n\ge0} X_{\pseudopartition{(3,3,n)}}.
  \end{equation}
  On the other hand, computing $\varphi(X_{(3)})\internalsym
  \varphi(X_{(3)})$ using Solomon's rule, gives:
  \begin{align}
    \varphi(X_{(3)})\internalsym \varphi(X_{(3)})
    &=
    \sum_{n,m} X_{\pseudopartition{(3,n)}} \internalsym
    X_{\pseudopartition{(3,m)}}
    =
    \sum_nX_{\pseudopartition{(3,n)}} \internalsym
    X_{\pseudopartition{(3,n)}} \notag
    \\
    &=
    \sum_{n\ge 0}
    X_{\pseudopartition{(3,n)}} +
    X_{\pseudopartition{(2,1,1,n)}} +
    X_{\pseudopartition{(1,2,2,n)}} +
    X_{\pseudopartition{(3,3,n)}}.
    \label{E:counterexample-RHS}
  \end{align}
  We can see that~\eqref{E:counterexample-LHS}
  and~\eqref{E:counterexample-RHS} are different since, for example,
  the term $X_{\pseudopartition{(2,1,1,n)}}$ appears
  in~\eqref{E:counterexample-RHS} but there is no term
  in~\eqref{E:counterexample-LHS} whose index is a composition
  starting with $2,1,1$.  A similar argument can be applied to show that $\psi$ is also not multiplicative.
\end{enumerate}
\end{remark}


      \part{Quasi--symmetric functions}

           \section{The internal and external coproduct of quasi-symmetric functions}
\label{S:quasi-symmetric}

In this section we consider space of quasi-symmetric functions $\qsym$, 
dual to the non-commutative symmetric functions. First, in preparation to the  introduction by dualization a new coproduct: the \emph{Heisenberg coproduct} we recall the basic definitions of the internal and external coproducts.

In Sections \ref{S:ncsymmetric}, \ref{S:nc-to-commutative} and \ref{S:hopf} we dealt with the following diagram: 
\begin{equation*}
  \xymatrix@C=20pt@R=20pt{
    \Sigma
    \ar@{}[r]|-*[@]{\subseteq}
    \ar@{->>}[d]_(.4){\pi}
    &
    \Sinfty
    \\
    \sym
    &,
  }
\end{equation*}
\noindent
and endowed the different spaces with $\smashprod$, the Heisenberg product and $\Delta$, the external coproduct. In this part we introduce other coproducts, and to avoid confusions we rename the external coproduct as $\Delta_{\externalsym}$ (it was called simply $\Delta$ when considered in \eqref{E:coprodper} and \eqref{E:coproductoncompletehom}). We proved the compatibility of $(\smashprod, \Delta_{\externalsym})$ at the level of $\Sigma$ and $\Lambda$. This compatibility is consistent with the known results about of the external (or Malvenuto--Reutenauer) product and the coproduct at the three levels of the above diagram, and about the compatibility with the internal (and Solomon) product at the levels of $\Lambda$ and $\Sigma$ (compare with Remark~\ref{R:coproduct-in-S}).  
 
Of the three Hopf algebras: $(\Sigma,\externalsym,\Delta_{\externalsym})$,  $(\Sinfty,\externalperm,\Delta_{\externalsym})$ and  $(\Lambda,\externalsym,\Delta_{\externalsym})$ the second and third are self dual (see for example ~\cite{gelfand95,gessel84,hazewinkel03, hazewinkel05,malvenuto95}).

Hence, we can complete the picture adding  the space $\qsym$ that is the graded dual of $\Sigma$, and that will fit into the following
commutative diagram of Hopf algebras:
\begin{equation*}
  \xymatrix@C=20pt@R=20pt{
    \Sigma
    \ar@{}[r]|-*[@]{\subseteq}
    \ar@{->>}[d]_(.4){\pi}
    &
    \Sinfty
    \ar@{->>}[d]^(.4){F}
    \\
    \sym
    \ar@{}[r]|-*[@]{\stackrel{\pi^*}{\subseteq}}
    &
    \qsym
  }
\end{equation*}
The diagram is self dual with respect to the antidiagonal. The new maps
$F$ and $\pi^*$ are the duals of the inclusion of $\Sigma$ in $\Sinfty$ and of
the projection of $\Sigma$ onto $\sym$, respectively.  The map $F$ is
described in detail in~\cite{aguiar05} and will not be used here.
Note that the space $\Sinfty$ is a Hopf algebra  with respect to
the Malvenuto-Reutenauer product (as noted in
Remark~\ref{R:coproduct-in-S}) and not with respect to the composition product. 

Next we recall the definition of $\qsym$. 
Let $\alphabet{X}=\{x_1,x_2,\ldots\}$ be an {\em alphabet}, i.e. a countable set, totally ordered by $x_1<x_2<\cdots$. 
 Let $\field[[\alphabet{X}]]$ be the algebra of formal
power series on $\alphabet{X}$ and $\qsym=\qsym(\alphabet{X})$ the
subspace linearly spanned by the elements
\begin{equation}\label{E:def-M}
  M_\alpha
  =
  \sum_{i_1<\dots<i_r}x_{i_1}^{a_1}\cdots x_{i_r}^{a_r}
\end{equation}
as $\alpha=(a_1,\ldots,a_r)$ runs over all compositions of $n$, for
$n\geq 0$.  The space $\qsym$ is a graded subalgebra of
$\field[[\alphabet{X}]]$ known as the algebra of {\em quasi-symmetric
  functions} (see \cite{gessel84}).  It is clear that any symmetric function
is quasi-symmetric, hence we have the inclusion of algebras
$\sym \subseteq \qsym$.  In~\cite{malvenuto95} it is proved that this map is $\pi^*$ defined above as the dual of the projection $\pi:\Sigma\epi\sym$ (see Section~\ref{S:nc-to-commutative}).

The algebra $\qsym$ carries two coproducts $\Delta_{\internalsym}$ and
$\Delta_{\externalsym}$ which are defined via evaluation of quasi-symmetric
functions on alphabets.  Let $\alphabet{Y}$ be another alphabet.  We
can view the disjoint union $\alphabet{X}+\alphabet{Y}$ and the
Cartesian product $\alphabet{X}\times\alphabet{Y}$ as alphabets as
follows: on $\alphabet{X}+\alphabet{Y}$ we keep the ordering among the
variables of $\alphabet{X}$ and among the variables of $\alphabet{Y}$,
and we require that every variable of $\alphabet{X}$ precede every
variable of $\alphabet{Y}$.  On $\alphabet{X}\times\alphabet{Y}$ we
impose the reverse lexicographic order:
\begin{equation*}
  (x_h,y_i)\leq (x_j,y_k)
  \qquad\text{means}\qquad
  y_i<y_k \text{ or } (y_i=y_k \text{ and }x_h< x_j).
\end{equation*}

The coproducts are defined by the formulas
\begin{equation*}
  \Delta_{\internalsym}
  \bigl( f(\alphabet{X})\bigr)
  =
  f(\alphabet{X}\times\alphabet{Y}) 
  \quad\text{and}\quad
  \Delta_{\externalsym}
  \bigl(f(\alphabet{X})\bigr)
  =
  f(\alphabet{X}+\alphabet{Y}),
\end{equation*}
together with the identification
$
\qsym(\alphabet{X},\alphabet{Y})\cong
\qsym(\alphabet{X})\otimes\qsym(\alphabet{X})
$
(separation of variables).

Consider the following pairing between the homogeneous components of
degree $n$ of $\qsym$ and $\Sigma$:
\begin{equation}
  \label{E:pairing}
  \langle M_\alpha, X_\beta\rangle
  =
  \delta_{\alpha,\beta}.
\end{equation}
It is known~\cite{gelfand95,gessel84,malvenuto95} that this pairing
identifies the product of quasi-symmetric functions with the
coproduct~\eqref{D:coproduct-sigma} of $\Sigma$, and the coproducts
$\Delta_{\internalsym}$ and $\Delta_{\externalsym}$ with the internal and external
products of $\Sigma$.  In other words,
\begin{equation*}
  \langle fg, u\rangle 
  =  
  \langle f\otimes g, \Delta(u) \rangle,
  \quad
  \langle \Delta_{\internalsym}f, u\otimes v\rangle 
  = 
  \langle f, u v\rangle,
  \quad
  \langle \Delta_{\externalsym}f, u\otimes v\rangle 
  = \langle f, u\externalsym v\rangle,
\end{equation*}
for any $f,g\in\qsym$ and $u,v\in\Sigma$.  Here we set $\langle
f\otimes g, u\otimes v\rangle=\langle f,u\rangle\langle g,v\rangle$.
\section{The Heisenberg coproduct of quasi--symmetric functions}
Let $\Delta_{\smashprod}$ be the coproduct of $\qsym$ dual to the Heisenberg product of $\Sigma$:
\begin{equation*}
  \langle \Delta_{\smashprod}f, u\otimes v\rangle 
  = 
  \langle f, u\smashprod v\rangle.
\end{equation*}
Since the Heisenberg product is a sum of terms of various
degrees~\eqref{E:smash-degree}, the Heisenberg coproduct is a finite sum of
the form
\begin{equation*}
  \Delta_{\smashprod}(f)
  =\sum_i f_i\otimes f'_i
\end{equation*}
with $0\leq \deg(f_i)$ and $\deg(f'_i)\leq \deg(f)\leq
\deg(f_i)+\deg(f'_i)$.  The terms corresponding to $\deg(f)=
\deg(f_i)=\deg(f'_i)$ and to $\deg(f)=\deg(f_i)+\deg(f'_i)$ are the
coproducts $\Delta_{\internalsym}(f)$ and $\Delta_{\externalsym}(f)$, respectively.

Let $\alphabet{1}+\alphabet{X}$ denote the alphabet $\alphabet{X}$
together with a new variable $x_0$ smaller than all the others and
with the property $x_0^k=x_0$ for any natural $k$.  Let
\begin{equation*}
  (\alphabet{1}+\alphabet{X})\times(\alphabet{1}+\alphabet{Y})-\alphabet{1}
\end{equation*}
be the Cartesian product of the alphabets $\alphabet{1}+\alphabet{X}$
and $\alphabet{1}+\alphabet{Y}$ with reverse lexicographic ordering
and with the variable $(x_0,y_0)$ removed.  We can suggestively denote
$(\alphabet{1}+\alphabet{X})\times(\alphabet{1}+\alphabet{Y})-\alphabet{1}$
by $\alphabet{X}+\alphabet{Y}+\alphabet{X}\alphabet{Y}$, although the
order is given properly by the former expression.

The following result was obtained in conversation with Arun Ram.

\begin{theorem}
  \label{T:coprod-Q} 
  For any $f\in\qsym$,
  \begin{equation*}
    \Delta_{\smashprod}
    \bigl(f(\alphabet{X})\bigr)
    =
    f(
    \alphabet{X}+\alphabet{Y}+\alphabet{X}\alphabet{Y}
    ).
  \end{equation*}
\end{theorem}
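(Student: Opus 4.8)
The plan is to verify the identity on the monomial basis $\{M_\gamma\}$ of $\qsym$, since both sides are linear in $f$, and then to match the two resulting expansions through the combinatorial formula of Theorem~\ref{T:smash-X}. First I would compute the left-hand side by duality. By the definition of $\Delta_{\smashprod}$ and the pairing~\eqref{E:pairing}, for a composition $\gamma$ of $n$ one has
\[
\Delta_{\smashprod}(M_\gamma)
=
\sum_{\alpha,\beta}\langle M_\gamma,\,X_\alpha\smashprod X_\beta\rangle\,M_\alpha\otimes M_\beta .
\]
Inserting formula~\eqref{E:smash-X} and using $\langle M_\gamma,X_{c(M)}\rangle=\delta_{\gamma,c(M)}$, the coefficient $\langle M_\gamma,X_\alpha\smashprod X_\beta\rangle$ equals the number $N^{\gamma}_{\alpha,\beta}$ of matrices $M\in\M^{\,n}_{\alpha,\beta}$ with $c(M)=\gamma$, the degree constraint $\max(|\alpha|,|\beta|)\le n\le|\alpha|+|\beta|$ being forced by $n=|\gamma|$. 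Thus $\Delta_{\smashprod}(M_\gamma)=\sum_{\alpha,\beta}N^{\gamma}_{\alpha,\beta}\,M_\alpha\otimes M_\beta$, and it remains to show that the evaluation $M_\gamma(\alphabet{X}+\alphabet{Y}+\alphabet{X}\alphabet{Y})$ carries exactly these coefficients.

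Next I would expand the right-hand side directly. Writing $\gamma=(c_1,\dots,c_\ell)$, the function $M_\gamma$ evaluated on $\mathbf Z=(\alphabet{1}+\alphabet{X})\times(\alphabet{1}+\alphabet{Y})-\alphabet{1}$ is the sum over strictly increasing sequences $z_1<\dots<z_\ell$ of the monomials $z_1^{c_1}\cdots z_\ell^{c_\ell}$, the order being reverse lexicographic. Each letter is a pair $z_t=(x_{h_t},y_{i_t})$, and by idempotency $x_0^k=x_0$, $y_0^k=y_0$ a letter $(x_0,y_i)$ with $i\ge1$ contributes a pure power $y_i^{c_t}$ to the $\alphabet{Y}$-part, a letter $(x_h,y_0)$ with $h\ge1$ contributes a pure power $x_h^{c_t}$ to the $\alphabet{X}$-part, and a letter $(x_h,y_i)$ with $h,i\ge1$ contributes $x_h^{c_t}y_i^{c_t}$; the letter $(x_0,y_0)$ has been removed precisely because it would record no exponent on either side. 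Under separation of variables $\qsym(\mathbf Z)\cong\qsym(\alphabet{X})\otimes\qsym(\alphabet{Y})$ the markers $x_0,y_0$ are sent to $1$, so each such monomial factors as a monomial of some $M_\alpha(\alphabet{X})$ tensored with a monomial of some $M_\beta(\alphabet{Y})$.

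Finally I would organize this data as a matrix and exhibit the bijection. To an increasing sequence contributing to the $(\alpha,\beta)$-component, with $\alpha=(a_1,\dots,a_r)$ and $\beta=(b_1,\dots,b_s)$, I associate the $(s+1)\times(r+1)$ matrix $M=(m_{ij})$ whose row index $i$ records the $i$-th distinct $\alphabet{Y}$-variable ($i=0$ standing for the marker $y_0$) and whose column index $j$ records the $j$-th distinct $\alphabet{X}$-variable ($j=0$ for $x_0$), the entry $m_{ij}$ being the exponent of the corresponding letter. Reverse lexicographic order sorts letters first by $\alphabet{Y}$-index and then by $\alphabet{X}$-index, which is exactly reading $M$ top-to-bottom and left-to-right; hence the nonzero entries, read in this order, spell out $\gamma$, i.e. $c(M)=\gamma$. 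The sums over the columns $j\ge1$ are the parts $a_1,\dots,a_r$ of $\alpha$ and the zeroth column sum is $n-|\alpha|$ (the total weight of the pure-$\alphabet{Y}$ letters), symmetrically for the rows with zeroth sum $n-|\beta|$, while the exclusion of $(x_0,y_0)$ forces $m_{00}=0$. These are precisely the defining conditions of $\M^{\,n}_{\alpha,\beta}$, and the assignment is reversible, so the number of monomials contributing $M_\alpha\otimes M_\beta$ equals $N^{\gamma}_{\alpha,\beta}$; comparison with the first paragraph closes the argument. The one genuinely delicate point — and the step I would treat with most care — is verifying that reverse lexicographic order on $\mathbf Z$ coincides with the top-to-bottom, left-to-right reading of the matrix and that the idempotent markers $x_0,y_0$ are absorbed consistently under separation of variables; the remaining verification of row and column sums is routine bookkeeping.
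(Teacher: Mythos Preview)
Your proposal is correct and follows essentially the same route as the paper: reduce by linearity to $f=M_\gamma$, use duality with~\eqref{E:pairing} and Theorem~\ref{T:smash-X} to identify the coefficient of $M_\alpha\otimes M_\beta$ as the number of matrices $M\in\M^{\,n}_{\alpha,\beta}$ with $c(M)=\gamma$, then expand $M_\gamma$ on the combined alphabet and encode each increasing sequence of letters as such a matrix, checking that the reverse-lexicographic order matches the top-to-bottom, left-to-right reading and that row/column sums and $m_{00}=0$ come out right. The paper organizes the same bijection slightly differently (it first builds a surjection from index-sequences to matrices and then identifies each fiber with the monomials of $M_\alpha(\alphabet{X})M_\beta(\alphabet{Y})$), but the content is identical; your flagging of the order-compatibility as the delicate step is apt.
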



\begin{proof}
We have to show that, with respect to the pairing~\eqref{E:pairing},
\begin{equation} 
  \label{E:pairing-smash}
  \langle 
  M_\gamma(\alphabet{X}+\alphabet{Y}+\alphabet{X}\alphabet{Y}), 
  X_\alpha\otimes X_\beta\rangle 
  = 
  \langle
  M_\gamma, X_\alpha\smashprod X_\beta
  \rangle
\end{equation}
for all $\gamma$, $\alpha$ and $\beta$ compositions of $n$, $p$ and
$q$, respectively.  Let us fix a composition $\gamma$ of $n$ and let
$k$ the length of $\gamma$.  Denote the set of indices of
$M_\gamma(\alphabet{X}+\alphabet{Y}+\alphabet{X}\alphabet{Y})$ by
\begin{equation*}
  \specialset{Y} 
  = 
  \bigl\{ 
  \bigl( (i_1,j_1),\ldots, (i_k,j_k) \bigr) 
  \bigm|
  (i_1,j_1) < \cdots < (i_k,j_k) 
  \bigr\}.
\end{equation*}
Consider the set
$
\specialset{A}_{\alpha,\beta}
=
\bigl\{M\in\M_{\alpha,\beta}^n\mid
w(M)=\gamma\bigr\}
$
and define the map
\begin{equation*}
  \psi:\specialset{Y} 
  \to 
  \bigcup_{\alpha,\beta}
  \specialset{A}_{\alpha,\beta}
\end{equation*}
as follows: given $(i_1,j_1)<\cdots<(i_k,j_k)$, let $\widetilde
M=(\widetilde m_{ij})$ be a matrix of zeros big enough to set
$\widetilde m_{j_\ell i_\ell}=\gamma_\ell$ (as usual in these proofs,
we start the indices in 0).  Then, remove all zero rows and columns,
except those with index 0; let us call $M$ to the result.  Since
$(0,0)$ is not a possible index, we have $m_{00}=0$.  Thus,
$M\in\specialset{M}_{\alpha,\beta}^n$ where $\alpha$ is the
composition obtained by adding all the rows of $M$ but the first, and
analogously with $\beta$ and the rows of $M$.

The map $\psi$ is surjective, since, given some
$M\in\specialset{A}_{\alpha,\beta}$, we can build a sequence of
indices in $\specialset{Y}$ by reading the nonzero entries of $M$, say
$m_{uv}$, and considering the pairs $(v,u)$ lexicographically
ordered.  Therefore, we can write
\begin{equation*}
  M_\gamma(\alphabet{X}+\alphabet{Y}+\alphabet{X}\alphabet{Y}) 
  =
  \sum_{q\in\specialset{Y}}(xy)_q^\gamma
  = 
  \sum_{\alpha,\beta}
  \sum_{M\in\specialset{A}_{\alpha,\beta}} 
  \sum_{q\in \psi^{-1}(M)}
  (xy)_q^\gamma
\end{equation*}
where $(xy)_q^\gamma$ denotes the monomial
$(x_{i_1}y_{i_1})^{\gamma_1}\cdots (x_{i_k}y_{i_k})^{\gamma_k}$ for
$q= \bigl((i_1,j_1),\ldots,(i_k,j_k)\bigr)$.  Collecting together the
$x$'s and $y$'s establishes a bijection between the terms of the last
sum indexed over $\psi^{-1}(M)$ and the terms of
$M_\alpha(\alphabet{X})M_\beta(\alphabet{Y})$.  Indeed, take a term
from this product, given by indices $i_{r_1}<\cdots<i_{r_k}$ and
$j_{s_1}<\cdots<j_{s_\ell}$, and build the pairs $(j_{s_u},i_{r_v})$
such that $m_{v,u}\not=0$.  We also have to consider the pairs
$(0,i_{r_v})$ and $(j_{s_u},0)$ according to nonzero entries in the
first row and column of $M$.  Ordering these indices it is clear that
they belong to $\psi^{-1}(M)$ and this is the inverse process of
grouping $x$'s and $y$'s.

Then, we can write
\begin{equation*}
  M_\gamma(\alphabet{X}+\alphabet{Y}+\alphabet{X}\alphabet{Y})
  =
  \sum_{\alpha,\beta}
  \sum_{M\in\specialset{A}_{\alpha,\beta}}
  M_\alpha(\alphabet{X})M_\beta(\alphabet{Y}) 
  = 
  \sum_{\alpha,\beta}\card \specialset{A}_{\alpha,\beta}
  M_\alpha(\alphabet{X}) M_\beta(\alphabet{Y}).
\end{equation*}
which obviously implies the equation~\eqref{E:pairing-smash}.
\end{proof}

We can express the dual of the isomorphism in
Theorem~\ref{T:iso-sigma} in term of alphabets in the full dual of
$\Sigma$, which is $\qsymcomplete=\prod_{n\ge 0}\qsym_n$.  The pairing
$\langle\,,\rangle: \Sigma\times\qsymcomplete\to\field$ is defined by
\begin{equation*}
  \langle f, g\rangle
  =
  \sum_n \langle f_n,g_n\rangle_n
\end{equation*}
where $f_n$ and $g_n$ are the restrictions of $f$ and $g$ to the
homogeneous components of degree $n$, and $\langle\,,\rangle_n$ is the
pairing defined in~\eqref{E:pairing}.

For this, given an alphabet $\alphabet{X}$ we define its exponential,
$\expalphabet{X}$, by
\begin{equation*}
  \expalphabet{X}
  =
  \alphabet{X}+\alphabet{X}^{(2)} + \alphabet{X}^{(3)} +\cdots
\end{equation*}
where the {\em divided power} $\alphabet{X}^{(n)}$ is the set
\begin{equation}
  \label{E:definition-alphabet-exponential}
  \alphabet{X}^{(n)}
  =
  \bigl\{
  (x_{i_1},x_{i_2},\ldots,x_{i_n})\in\alphabet{X}^n
  \bigm|
  x_{i_1}<x_{i_2}<\cdots<x_{i_n}
  \bigr\}.
\end{equation}
We endow $\expalphabet{X}$ with the reverse lexicographic order.  With
this notations the following equation holds:
\begin{equation*}
  \expalphabet{X+Y} 
  = 
  \bigl(1+\expalphabet{X}\bigr) \bigl(1+\expalphabet{Y}\bigr) - 1
\end{equation*}
where the equality is considered as ordered sets.  Indeed, denote by
$(x)_k$ the monomial $x_{i_1}\cdots x_{i_k}$ with $i_1<\cdots <i_k$.
Then, given $(x)_k(y)_\ell < (x')_{k'}(y')_{\ell'}$ in
$\expalphabet{X+Y}$, it is immediate to see that either
$(y)_\ell<(y')_{\ell'}$ or $(y)_\ell=(y')_{\ell'}$ and
$(x)_k<(x')_{k'}$, which is the definition of the order in the left
hand side.  Clearly, the same argument applies in the other direction.

\begin{theorem} 
  \label{T:dual-iso-sigma}
  The dual of the isomorphism $\psi$ from
  $(\Sigma,\externalsym,\Delta_{\externalsym})$ to $(\Sigma,\smashprod,\Delta_{\externalsym})$ of
  Theorem~\ref{T:iso-sigma} with respect to the pairing
  $\langle\,,\rangle$, is the isomorphism $\psi^*$ from
  $(\qsymcomplete,\cdot,\Delta_{\smashprod})$ to
  $(\qsymcomplete,\cdot,\Delta_{\externalsym})$ given by
  \begin{equation*}
    \psi^*(f)=f\bigl(\expalphabet{X}\bigr).
  \end{equation*}
\end{theorem}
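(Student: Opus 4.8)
The plan is to verify directly the adjunction identity defining the dual map, and to identify the abstract dual with the concrete alphabet substitution by a combinatorial comparison of structure constants. Recall that $\psi^*$ is determined by $\langle X_\alpha,\psi^*(g)\rangle=\langle \psi(X_\alpha),g\rangle$ for $X_\alpha\in\Sigma$ and $g\in\qsymcomplete$. Since $\psi$ is a Hopf-algebra isomorphism from $(\Sigma,\externalsym,\Delta_{\externalsym})$ to $(\Sigma,\smashprod,\Delta_{\externalsym})$ by Theorem~\ref{T:iso-sigma}, dualizing interchanges products with coproducts and shows at once that $\psi^*$ is an isomorphism from $(\qsymcomplete,\cdot,\Delta_{\smashprod})$ to $(\qsymcomplete,\cdot,\Delta_{\externalsym})$: the product $\cdot$ is dual to $\Delta_{\externalsym}$, while $\Delta_{\smashprod}$ is dual to $\smashprod$ by definition. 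Hence the structural assertions are formal, and the only genuine content is the explicit formula. Writing $\Phi(g):=g(\expalphabet{X})$, it suffices by bilinearity (and because each $X_\alpha$ pairs nontrivially with only finitely many homogeneous components) to prove, for all compositions $\alpha=(a_1,\dots,a_r)$ and $\gamma$, the equality $\langle X_\alpha,M_\gamma(\expalphabet{X})\rangle=\langle \psi(X_\alpha),M_\gamma\rangle$; that is, the coefficient of $M_\alpha$ in $M_\gamma(\expalphabet{X})$ equals the coefficient of $X_\gamma$ in $X_{(a_1)}\smashprod\cdots\smashprod X_{(a_r)}$.

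As an organizing check I would first confirm that $\Phi$ carries exactly the structure just deduced for $\psi^*$. It is an algebra map for the ordinary product, since evaluating quasi-symmetric functions at a fixed alphabet is a ring homomorphism, $\Phi(fg)=(fg)(\expalphabet{X})=\Phi(f)\Phi(g)$. It intertwines $\Delta_{\smashprod}$ with $\Delta_{\externalsym}$: on one hand $\Delta_{\externalsym}\Phi(g)=g(\expalphabet{X+Y})$, and on the other $(\Phi\otimes\Phi)\Delta_{\smashprod}(g)=g(\expalphabet{X}+\expalphabet{Y}+\expalphabet{X}\expalphabet{Y})$ by Theorem~\ref{T:coprod-Q}, and these coincide by the ordered-alphabet identity $\expalphabet{X+Y}=(1+\expalphabet{X})(1+\expalphabet{Y})-1$ established just before the theorem. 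Moreover both $\Phi$ and $\psi^*$ are unitriangular for the degree filtration with identity leading term: the degree-$|\gamma|$ part of $M_\gamma(\expalphabet{X})$ is $M_\gamma$, arising from the length-one letters $\alphabet{X}\subseteq\expalphabet{X}$, while dually $\psi(X_\alpha)=X_\alpha+(\text{lower degree})$. This shows $\Phi$ is a morphism between the same structured spaces as $\psi^*$ and agrees with it in leading order.

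The equality of the two maps is then completed by matching the structure constants from the first paragraph. Expanding $M_\gamma(\expalphabet{X})$ in the monomial basis, a contribution corresponds to a reverse-lexicographically ordered system of nonempty increasing blocks $w_1<\cdots<w_k$ in $\expalphabet{X}$ with $k=\ell(\gamma)$, and the coefficient of $M_\alpha$ counts those systems whose total exponent vector on the distinct variables used is $\alpha$. On the other side, iterating Theorem~\ref{T:smash-X} across the $r$ factors expresses the coefficient of $X_\gamma$ in $X_{(a_1)}\smashprod\cdots\smashprod X_{(a_r)}$ as a count of integer matrix fillings of the type $\M_{\alpha,\beta}^n$. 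The remaining and decisive step is a weight-preserving bijection between these two sets of configurations; it is essentially the bijection constructed in the proof of Theorem~\ref{T:coprod-Q} between the index set $\specialset{Y}$ and the matrices $\specialset{A}_{\alpha,\beta}$, now applied iteratively. The main obstacle is precisely establishing this correspondence compatibly with the reverse-lexicographic order on $\expalphabet{X}$, so that the admissible block systems counting the $M_\alpha$-coefficient are matched exactly with the matrix fillings counting the iterated Heisenberg coefficient; once this bijection is in place the two coefficients agree, yielding $\Phi=\psi^*$.
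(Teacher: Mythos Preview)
Your second paragraph already contains everything needed, and it is essentially the paper's argument; your third paragraph then takes a harder and unfinished detour. The paper does not match structure constants by an iterated bijection. Instead it observes that the desired identity
\[
\bigl\langle \psi(g),f\bigr\rangle=\bigl\langle g,\,f(\expalphabet{X})\bigr\rangle
\]
is multiplicative in $g$ for the external product. Indeed, using that $\psi$ is an algebra map $(\Sigma,\externalsym)\to(\Sigma,\smashprod)$, that $\Delta_{\smashprod}$ is dual to $\smashprod$ (Theorem~\ref{T:coprod-Q}), and the alphabet identity $\expalphabet{X+Y}=(1+\expalphabet{X})(1+\expalphabet{Y})-1$ (all of which you verified), one gets for generators $g,g'$ the chain
\[
\bigl\langle \psi(g\externalsym g'),f\bigr\rangle
=\bigl\langle \psi(g)\otimes\psi(g'),\,f(\alphabet{X}+\alphabet{Y}+\alphabet{XY})\bigr\rangle
=\bigl\langle g\otimes g',\,f(\expalphabet{X+Y})\bigr\rangle
=\bigl\langle g\externalsym g',\,f(\expalphabet{X})\bigr\rangle,
\]
so it suffices to check the identity on the generators $X_{(n)}$. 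There $\psi(X_{(n)})=X_{(n)}$ and the coefficient of $M_{(n)}$ in $M_\gamma(\expalphabet{X})$ is $\delta_{\gamma,(n)}$ (your unitriangularity observation), which settles it.

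Your proposed route via a direct bijection between block systems in $\expalphabet{X}$ and iterated matrix fillings from Theorem~\ref{T:smash-X} is plausible, but it is precisely the step you leave open, and carrying it out would amount to re-proving the multiplicativity above combinatorially for each length $r$. The paper's reduction to length one avoids this entirely.
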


\begin{proof} We have to show that $\bigl\langle
  \psi(X_\gamma),f\bigr\rangle= \bigl\langle
  X_\gamma,\psi^*(f)\bigr\rangle$.  Observe that, from the definition
  of the pairing, it is enough to prove this equation for each
  degree.  Moreover, it is enough to prove it for the generators of the
  algebra $(\Sigma,\externalsym)$ since, for $g$ and $g'$ generators
\begin{align*}
  \bigl\langle\psi(g\externalsym g'),f\bigr\rangle 
  &= 
  \bigl\langle
  \psi(g)\smashprod\psi(g'), f\bigr\rangle 
  \\
  &=
  \bigl\langle 
  \psi(g)\otimes\psi(g'), f(\alphabet{X+Y+XY})
  \bigr\rangle 
  \\
  &= 
  \sum_i 
  \bigl\langle\psi(g),f_i(\alphabet{X})
  \bigr\rangle\;
  \bigl\langle
  \psi(g'),f'_i(\alphabet{Y})
  \bigr\rangle 
  \\
  &=
  \sum_i 
  \bigl\langle g, f_i\bigr(\expalphabet{X} \bigl) 
  \bigr\rangle\;
  \bigl\langle g', f'_i\bigr(\expalphabet{Y}\bigl) 
  \bigr\rangle  
  \\
  &= 
  \bigl\langle g\otimes g',
  f\bigl(\expalphabet{X}+\expalphabet{Y}+\expalphabet{X}
  \expalphabet{Y}
  \bigr)
  \bigr\rangle 
  \\
  &=
  \bigl\langle g\otimes g', f\bigl(\expalphabet{X+Y}\bigr)
  \bigr\rangle 
  \\
  &=
  \bigl\langle g\otimes g', \Delta\bigl(f(\expalphabet{X})\bigr) 
  \bigr\rangle
  \\
  &= 
  \bigl\langle g\externalsym g', f\bigl(\expalphabet{X}\bigr)
  \bigr\rangle.
\end{align*}
Next, we prove the duality for the set of generators $X_{(n)}$ for $n\ge 0$, and for $f=M_\alpha$ where $\alpha$
is a composition of $n$.  In this case we have $\psi(X_{(n)})=X_{(n)}$
and the equation $\bigl\langle X_{(n)},M_\alpha\bigr\rangle =
\bigl\langle X_{(n)}, M_\alpha\bigl(\expalphabet{X}\bigr) \bigr\rangle
= \delta_{(n),\alpha}$ is immediately verified.
\end{proof}

Endowed with the coproduct $\Delta_{\smashprod}$, the algebra $\qsym$ is
a graded connected Hopf algebra, in duality with the graded connected
Hopf algebra $(\Sigma,\smashprod,\Delta)$.  

We finish this Section by expressing the antipode of
this Hopf algebra in terms of the alphabets.

First, define the evaluation of quasi-symmetric functions on the the
opposite of an alphabet $\alphabet{X}$ by the equation
\begin{equation}
  \label{E:definition-alphabet-negative}
  M_\alpha(-\alphabet{X}) 
  = 
  (-1)^r \sum_{i_1\ge \cdots \ge i_r} x^{a_1}_{i_1}
  \cdots x^{a_r}_{i_r},
\end{equation}
for any composition $\alpha=(a_1,\ldots,a_r)$ (compare with the
definition of $M_\alpha$ in~\eqref{E:def-M}).  

Next, we define the
alphabet
\begin{equation}
  \label{E:definition-alphabet-star}
  \alphabet{X}^*
  =
  \alphabet{X} +
  \alphabet{X}^2 +
  \alphabet{X}^3 + \cdots
\end{equation}
as the disjoint union of the Cartesian powers $\alphabet{X}^n$ under
reverse lexicographic order.  For instance
$(x_3,x_1,x_2)<(x_2,x_2)<(x_1,x_3,x_2)$.

\begin{theorem}
  \label{T:antipode-Q}
  The antipode of the Hopf algebra of quasi-symmetric functions
  $(\qsym,\cdot,\Delta_{\smashprod})$ is:
  \begin{equation*}
    S_{\smashprod}(f) = f\bigl((-\alphabet{X})^*\bigr).
  \end{equation*}
\end{theorem}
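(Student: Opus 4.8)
The plan is to check directly that the stated map is the antipode, i.e. the convolution inverse of the identity for $\Delta_{\smashprod}$. Since $(\qsym,\cdot,\Delta_{\smashprod})$ is a graded connected Hopf algebra, its antipode exists and is the unique two--sided convolution inverse of $\mathrm{id}$ in $\End(\qsym)$; consequently it suffices to verify the single equation
\[
  m\circ(S_{\smashprod}\otimes\mathrm{id})\circ\Delta_{\smashprod}=\iota\varepsilon
\]
for the candidate $S_{\smashprod}(f)=f\bigl((-\alphabet{X})^{*}\bigr)$, because a left convolution inverse of an element that already possesses a two--sided inverse must coincide with it. The main input is Theorem~\ref{T:coprod-Q}, which realizes the coproduct as $\Delta_{\smashprod}\bigl(f(\alphabet{X})\bigr)=f(\alphabet{X}+\alphabet{Y}+\alphabet{X}\alphabet{Y})$; I shall use that this identity is natural under substitution of (possibly virtual) alphabets into $\alphabet{X}$ and $\alphabet{Y}$, the substitutions $-\alphabet{X}$ and $\alphabet{X}^{*}$ being the ones fixed by~\eqref{E:definition-alphabet-negative} and~\eqref{E:definition-alphabet-star}.

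First I would rewrite the left--hand side as an alphabet substitution. Writing $\Delta_{\smashprod}(f)=\sum_i f_i\otimes f_i'$ through separation of variables, applying $S_{\smashprod}$ to the first tensorand turns $f_i$ into $f_i\bigl((-\alphabet{X})^{*}\bigr)$, and the multiplication $m$ evaluates both tensorands on a common alphabet. By the naturality of the formula of Theorem~\ref{T:coprod-Q}, specializing $\alphabet{X}\mapsto(-\alphabet{X})^{*}$ and $\alphabet{Y}\mapsto\alphabet{X}$ gives
\[
  \bigl(m\circ(S_{\smashprod}\otimes\mathrm{id})\circ\Delta_{\smashprod}\bigr)\bigl(f(\alphabet{X})\bigr)
  =
  f\bigl((-\alphabet{X})^{*}+\alphabet{X}+(-\alphabet{X})^{*}\,\alphabet{X}\bigr).
\]
Adjoining the idempotent variable of $\alphabet{1}$ and using the relation $\alphabet{1}+(\alphabet{A}+\alphabet{B}+\alphabet{A}\alphabet{B})=(\alphabet{1}+\alphabet{A})(\alphabet{1}+\alphabet{B})$ that already governs the reverse--lexicographic product behind Theorem~\ref{T:coprod-Q}, the combined alphabet is encoded by the Cartesian product $\bigl(\alphabet{1}+(-\alphabet{X})^{*}\bigr)\bigl(\alphabet{1}+\alphabet{X}\bigr)$.

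The heart of the argument is then the signed geometric--series identity
\[
  \bigl(\alphabet{1}+(-\alphabet{X})^{*}\bigr)\bigl(\alphabet{1}+\alphabet{X}\bigr)=\alphabet{1},
\]
equivalently $\alphabet{1}+(-\alphabet{X})^{*}=(\alphabet{1}+\alphabet{X})^{-1}$. I would obtain it from the plain geometric series $\alphabet{1}+\alphabet{Y}^{*}=\sum_{n\ge0}\alphabet{Y}^{n}=(\alphabet{1}-\alphabet{Y})^{-1}$, valid for the Cartesian--product powers $\alphabet{Y}^{n}$, by specializing $\alphabet{Y}=-\alphabet{X}$ so that $\alphabet{1}-\alphabet{Y}=\alphabet{1}+\alphabet{X}$. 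Granting this, the combined alphabet equals $\alphabet{1}$, i.e. is empty once the idempotent is removed, so $f$ evaluated on it returns its degree--zero part $\varepsilon(f)$, which is exactly $\iota\varepsilon(f)$; this establishes the antipode equation and hence the theorem. Alternatively one could transport the classical external antipode $f\mapsto f(-\alphabet{X})$ through the Hopf isomorphism $\psi^{*}$ of Theorem~\ref{T:dual-iso-sigma}, but that argument only lives in the completion $\qsymcomplete$.

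I expect the only real obstacle to be the rigorous justification of this last identity: one must make precise the Cartesian--product powers $(-\alphabet{X})^{n}$ and the negative alphabet under the reverse--lexicographic order, and check that the telescoping in $(\alphabet{1}-\alphabet{Y})(\alphabet{1}+\alphabet{Y}^{*})=\alphabet{1}$ survives the substitution $\alphabet{Y}=-\alphabet{X}$ with the signs dictated by~\eqref{E:definition-alphabet-negative}. Everything else --- the reduction to one antipode equation and the passage to the combined alphabet --- is formal. It is worth noting that, unlike the transport argument, this computation takes place entirely inside the graded algebra $\qsym$, since each $M_\gamma$ only involves tuples of bounded total length, so no completion is needed.
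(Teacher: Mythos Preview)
Your proposal is correct and follows essentially the same route as the paper: both verify one of the two antipode convolution identities by substituting into the alphabet formula of Theorem~\ref{T:coprod-Q} and then showing that the resulting combined alphabet is trivial. The only differences are cosmetic. You check the left identity $m\circ(S_{\smashprod}\otimes\mathrm{id})\circ\Delta_{\smashprod}=\iota\varepsilon$, while the paper checks the right one, landing on $M_\alpha\bigl(\alphabet{X}+(-\alphabet{X})^{*}+\alphabet{X}(-\alphabet{X})^{*}\bigr)=0$; and where you package the cancellation as the formal geometric--series identity $\bigl(\alphabet{1}+(-\alphabet{X})^{*}\bigr)\bigl(\alphabet{1}+\alphabet{X}\bigr)=\alphabet{1}$, the paper instead expands $M_{(a)}$ on the combined alphabet and exhibits the telescoping directly at the level of monomials. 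The concrete computation in the paper is precisely the justification you flag as the ``only real obstacle,'' so your outline and the paper's proof fit together cleanly.
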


\begin{proof}
  By Theorem~\ref{T:coprod-Q}, it is enough to prove that
  $M_\alpha\bigl(\alphabet{X}+(-\alphabet{X})^* + \alphabet{X}
  (-\alphabet{X})^* \bigr)=0$ for any alphabet $\alphabet{X}$ and for
  any composition $\alpha$.  We ilustrate the argument for a
  composition with only one part: $\alpha=(a)$, the argument for a
  composition with several parts is essentially the same.

  By selecting variables from each of the three alphabets
  $\alphabet{X}$, $(-\alphabet{X})^*$, and $\alphabet{X}
  (-\alphabet{X})^*$, we can write
  \begin{multline*}
    M_{(a)}\bigl(\alphabet{X}+(-\alphabet{X})^* + \alphabet{X}
    (-\alphabet{X})^* \bigr)
    \\
    =
    \sum x_i^a +
    \sum_r (-1)^r \sum (x_{i_1}\cdots x_{i_r})^a +
    \sum_r (-1)^r \sum x_j^a (x_{i_1}\cdots x_{i_r})^a.
  \end{multline*}
  It is easy to see that the first sum cancel with the terms with
  $r=1$ of the second sum, while the remaining terms of the second sum
  cancel with the last sum.
\end{proof}

      \part{Appendix}

           \section{The proofs} 
\label{S:appendix}

In this Appendix we provide the postponed proofs of the technical lemmas used in the paper. 

\subsection{Proof of Lemma~\ref{L:bijection-for-ha-smash-hb}}
\label{P:bijection-for-ha-smash-hb}

\begin{proof}
  To define the bijection $\Upsilon\to \M_{\alpha,\beta}^n$, we start
  by splitting the interals $[1,p]$ and $[1,q]$ as below:
  \begin{alignat*}{2}
    E_1
    &=
    [1,a_1],
    &\qquad
    F_1
    &=
    [1,b_1],
    \\
    E_2
    &=
    [a_1+1, a_1+a_2],
    &\qquad
    F_2
    &=
    [b_1+1, b_1+b_2],
    \\
    &\centerinequal{\vdots}
    &\qquad
    &\centerinequal\vdots
    \\
    E_k
    &=
    [a_1+\dots+a_{k-1}+1, p],
    &\qquad
    F_s
    &=
    [b_1+\dots+b_{s-1}+1, q],
  \end{alignat*}
  where $\alpha=(a_1,\dots,a_k)$ and $\beta=(b_1,\dots,b_s)$.  Given an
  element $v=\sigma\times\tau \in \S_p\times\S_q$ we consider the
  shuffles $\zeta_\alpha(\sigma)\in\Sh(\alpha)$ and
  $\zeta_\beta(\tau)\in\Sh(\beta)$ characterized by the equations
  \begin{equation}
    \label{E:shuffles-representatives}
    \sigma
    =
    \zeta_\alpha(\sigma) u,
    \qquad
    \tau
    =
    \zeta_\beta(\tau) v,
  \end{equation}
  with $u\in\S_\alpha$ and $v\in\S_\beta$.  To simplify the notation,
  we write $\zeta_\alpha=\zeta_\alpha(\sigma)$ and
  $\zeta_\beta=\zeta_\beta(\tau)$.  We further split each interval
  $E_i$ and $F_j$ as below:
  \begin{equation*}
    E_i = E'_i \sqcup E''_i,
    \qquad
    F_j = F'_j \sqcup F''_j,
  \end{equation*}
  such that
  \begin{alignat*}{2}
    \zeta_\alpha(E'_i) 
    &\subseteq
    [1,n-q],
    &\qquad
    \zeta_\beta(F'_j)
    &\subseteq
    [1,p+q-n],
    \\
    \zeta_\alpha(E''_i) 
    &\subseteq
    [n-q+1,p],
    &\qquad
    \zeta_\beta(F''_j)
    &\subseteq
    [p+q-n+1,q],
  \end{alignat*}
  for $i=1,\dots,k$ and $j=1,\dots,s$.  Observe that with these
  definitions we have the decomposition of the interval $[1,n]$ into
  \begin{align}
    \label{E:decomposition-1-n-q}
    [1,n-q] 
    &=
    \bigsqcup_{i=1}^k \zeta_\alpha(E'_i),
    \\ \label{E:decomposition-n-q-p}
    [n-q+1,p]
    &=
    \bigsqcup_{i=1}^k \zeta_\alpha(E''_i)
    =
    \bigsqcup_{j=1}^s \bigl(n-q +\zeta_\beta(F'_j)\bigr),
    \\ \label{E:decomposition-p-n}
    [p+1,n]
    &=
    \bigsqcup_{j=1}^s \bigl(n-q +\zeta_\beta(F''_j)\bigr).
  \end{align}
  Define the matrix $M_{\sigma\times\tau}$ of dimension $(k+1)\times(s+1)$
  whose entries are
  \begin{alignat*}{2}
    m_{00} &= 0, &&
    \\
    m_{i0} &= \card{E'_i},
    &\qquad&
    \text{for $i=1,\dots,k$,}
    \\
    m_{0j} &= \card{F''_j},
    &\qquad&
    \text{for $j=1,\dots,s$,}
    \\
    m_{ij} &= 
    \card{\Bigl[ \zeta_\alpha(E''_i) \cap
      \bigl(n-q+\zeta_\beta(F'_j)\bigr) \Bigr]}
    &\qquad&
    \text{otherwise.}
  \end{alignat*}
  
  The matrix $M_{\sigma\times\tau}$ belongs to $\M_{\alpha,\beta}^n$.
  Assume that $i\not=0$.  Since $\zeta_\alpha(E''_i)\subseteq
  [n-q+1,p]\subseteq \bigsqcup_{j=1}^s \bigl(n-q
  +\zeta_\beta(F''_j)\bigr)$, we get
  \begin{align*}
    \sum_{j=0}^s m_{ij}
    &=
    \card{E'_i} + \sum_{j=1}^s 
    \card{\Bigl[ \zeta_\alpha(E''_i) \cap
      \bigl(n-q+\zeta_\beta(F'_j)\bigr) \Bigr]}
    \\
    &=
    \card{E'_i} + \card\Bigl[
      \zeta_\alpha(E''_i)\cap \bigsqcup_{j=1}^s
      \bigl(n-q+\zeta_\beta(F'_j)\bigr)
    \Bigr]
    \\
    &=
    \card{E'_i} + \card\bigl(\zeta_\alpha(E''_i) \bigr) = \card{E_i} = a_i.
  \end{align*}
  On the other hand, if $i=0$, then, by~\eqref{E:decomposition-p-n},
  the sum of $m_{0j}$ for $j=0,\dots,s$, coincides with
  $\card[p+1,n]=n-p$.

  Next we show that the matrix $M_{\sigma\times\tau}$ does not depend on
  the choice of representative of the coset
  $(\S_p\times_n\S_q)v$. Let $x\in\S_{n-q}$, $y\in\S_{p+q-n}$, and
  $z\in\S_{n-p}$, so that $x\times y\times z\in
  \S_p\times_n\S_q$. Consider the representative $v'=\sigma'\times\tau'$
  where
  \begin{equation*}
    \sigma'
    =
    (x\times y) \sigma
    \quad\text{and}\quad
    \tau'
    =
    (y\times z) \tau.
  \end{equation*}
  Let $\zeta'_\alpha$ and $\zeta'_\beta$ the shuffles associate to
  $v'$. As $\zeta_\alpha(E'_i)\subseteq[1,n-q]$, then $(x\times y)
  \bigl(\zeta_\alpha(E'_i)\bigr) =
  x\bigl(\zeta_\alpha(E'_i)\bigr)$. But we also have
  $x\bigl(\zeta_\alpha(E'_i)\bigr) =
  \zeta'_\alpha(E_i)=\zeta'_\alpha(\tilde
  E'_i)\sqcup\zeta'_\alpha(\tilde E''_i)$, where $E_i=\tilde E'_i\sqcup
  \tilde E''_i$ is the decomposition of $E_i$ corresponding to the
  shuffle $\zeta'_\alpha$, that satisfies $\zeta'_\alpha(\tilde E'_i)
  \subseteq [1,n-q]$ and $\zeta'_\alpha(\tilde E''_i)
  \subseteq [n-q+1,p]$. In summary, $x\bigl(\zeta_\alpha(E'_i)\bigr)
  \subseteq \zeta'_\alpha(\tilde E'_i)$. Interchanging the roles of
  $\zeta_\alpha$ and $\zeta'_\alpha$ we obtain an equality, which
  implies that $m_{i0}=\card E'_i = \card \tilde E'_i = m'_{i0}$,
  where $m'_{ij}$ are the entries of the matrix $M_{v'}$. This proves
  the equality of the first row of the matrices. The argument for the
  other rows is similar.

  The matrix $M_v$ do not depend on the choice of representative of
  $v(\S_\alpha\times \S_\beta)$, since the shuffles
  satisfying~\eqref{E:shuffles-representatives} are the same for all
  the elements on this coset. In conclusion, the matrix $M_v$ depends
  only on the double cosets
  $(\S_p\times_n\S_q)v(\S_\alpha\times\S_\beta)$.

  Next we show that the parabolic subgroup $\S_{p(M_v)}$ is
  $\S_\alpha\times_n^\upsilon\S_\beta$. An element of
  $\S_\alpha\times_n^\upsilon\S_\beta$ can be written as $x\times
  y\times z$ where
  \begin{align*}
    x\times y 
    &= 
    \zeta_\alpha(\sigma_{a_1}\times\dots\times\sigma_{a_k})\zeta^{-1}_\alpha,
    \\
    y\times z
    &=
    \zeta_\beta(\tau_{b_1}\times\dots\times\tau_{b_s})\zeta^{-1}_\beta.
  \end{align*}
  Evaluating at $\zeta_\alpha(E'_i)$ we deduce that
  $\zeta_\alpha\sigma_{a_i}(E'_i) = x(E'_i)$ and conclude that
  $\sigma_{a_i}(E'_i)=E'_i$. Proceeding in a similar manner with the
  other decompositions we obtain
  \begin{alignat}{2}
    \label{E:first-decomposition}
    \sigma_{a_i}(E'_i) &= E'_i,
    &\qquad
    \tau_{b_j}(F'_j) &= F'_j,
    \\
    \sigma_{a_i}(E''_i) &= E''_i,
    &\qquad
    \tau_{b_j}(F''_j) &= F''_j,
  \end{alignat}
  for all $i=1,\dots,k$ and $j=1,\dots,s$.

  This decomposition can be further refined. Evaluating as above at
  the subsets $X_{ij}=\zeta_\alpha(E''_i)\cap\zeta_\beta(F'_j)$, we
  obtain the equality
  \begin{equation*}
    \zeta_\alpha\sigma_{a_i}
    \bigl(
       \zeta_\alpha^{-1}(X_{ij})
    \bigr)
    =
    y(X_{ij})
    =
    \zeta_\beta\tau_{b_j}
    \bigl(
       \zeta_\beta^{-1}(X_{ij})
    \bigr).
  \end{equation*}
  Now, 
  $
    \zeta_\alpha\sigma_{a_i}
    \bigl(
       \zeta_\alpha^{-1}(X_{ij})
    \bigr)
    \subseteq \zeta_\alpha(E''_i)
  $
  and also
  $
    \zeta_\beta\tau_{b_j}
    \bigl(
       \zeta_\beta^{-1}(X_{ij})
    \bigr)
    \subseteq \zeta_\beta(F'_j)
  $.
  From the above equality we conclude that 
  $
  \zeta_\alpha\sigma_{a_i}
    \bigl(
       \zeta_\alpha^{-1}(X_{ij})
    \bigr)
    \subseteq \zeta_\alpha(E''_i)\cap\zeta_\beta(F'_j)
  $,
  and then 
  $
  \sigma_{a_i}
  \bigl(
     \zeta_\alpha^{-1}(X_{ij})
  \bigr) \subseteq \zeta_\alpha^{-1}(X_{ij})
  $.
  This inclusion is actually an equality, since both sets have the
  same cardinality. Therefore, we get the following refinment
  of~\eqref{E:first-decomposition}
  \begin{alignat*}{2}
    \sigma_{a_i}(E'_i) &= E'_i,
    &\qquad
    \sigma_{a_i}\bigl(\zeta^{-1}_\alpha(X_{ij})\bigr) &= \zeta^{-1}_\alpha(X_{ij}),
    \\
    \tau_{b_j}(F''_j) &= F''_j,
    &\qquad
    \tau_{b_j}\bigl(\zeta^{-1}_\beta(X_{ij})\bigr) &= \zeta^{-1}_\beta(X_{ij}).
  \end{alignat*}
  Note that $\card X_{ij}=m_{ij}$, and thus the previous decomposition
  shows that $x\times y\times z$ belongs to $\S_{p(M)}$.

  The map $\upsilon\mapsto M_\upsilon$ is invertible, since from the
  entries of the matrix $M_\upsilon$ we can recover the shuffles
  $\zeta_\alpha$ and $\zeta_\beta$, which are in the same double coset
  as $\upsilon$.
\end{proof}

\subsection{Proof of Lemma~\ref{L:interval}}
\label{P:interval}

\begin{proof}
  As $\eta$ and $\tau$ are fixed throughout this lemma, we write
  $\varphi=\varphi_{\eta,\tau}$.  Let $x,y\in F_i\cap \varphi^{-1}
  E_j$ with $x<y$.  Consider $z$ such that $x<z<y$.  Therefore,
  $x,y\in F_i$ and, since $F_i$ is an interval, we conclude that $z\in
  F_i$.

  On the other hand $\varphi(x), \varphi(y)\in E_j$.  Since $\tau\in
  \B_\beta$, then $\Id\times\tau$ is increasing in $F_i$:
  \begin{equation} 
    \label{E:inequality}
    (\Id\times\tau)(x) < (\Id\times\tau)(z) < (\Id\times\tau)(y).
  \end{equation}
  In order to prove that $\varphi(z)$ also belongs to $E_j$, we
  consider the following cases:
  \begin{enumerate}
  \item Assume that $j=0$.  Then, $\varphi(x),\varphi(y)\in
    E_0=[p+1,n]$.  Since $(\eta\times\Id)$ is the identity on that
    interval, this implies that $\beta_0(\Id\times\tau)(x)$ and
    $\beta_0(\Id\times\tau)(y)$ are in $[p+1,n]$.  But
    $\beta_0^{-1}[p+1,n] = [n-q+1, 2n-p-q]$ and $\beta_0$ is
    increasing in that set.  Therefore, the three terms
    in~\eqref{E:inequality} belong to $[n-q+1,2n-p-q]$ and, applying
    $(\eta\times\Id)\beta_0$, which is increasing on this set, we
    obtain that $\varphi(x)<\varphi(z)<\varphi(y)$.
  \item Assume that $j>0$.  Consider the cases:
    \begin{enumerate}
    \item Assume $i=0$.  In this case we have $x,z,y\in F_0=[1,n-q]$.
      Then, applying $\Id\times\tau|_{F_0}=\Id$ we continue in the
      same set. The permutation $\beta_0$ sends increasingly $[1,n-q]$
      into $[p+q-n+1,p]$.  In this last interval, $\eta$ is also
      increasing. Thus, the inequality~\eqref{E:inequality} implies
      that $\varphi(x)<\varphi(z)< \varphi(y)$.
    \item Assume $i>0$.  We have that $x,y,z\in F_j\subset [n-q+1,n]$.
      Applying $\Id\times\tau$ we have that the terms
      of~\eqref{E:inequality} are also in $[n-q+1,n]$.  If
      $(\Id\times\tau)(x) \in [n-q+1,2n-p-q]$, then
      $\beta_0(\Id\times\tau)(x)\in [p+1,n]$ and
      $\varphi(x)\in[p+1,n]=E_0$, which contradicts the assumption
      $j>0$.  Therefore, the terms in~\eqref{E:inequality} belong to
      $[2n-p-q+1, n]$.  The permutation $\beta_0$ maps increasingly
      this interval into $[1,p+q-n]$, and $\eta$ is also increasing in
      that image.  Thus, we conclude that
      $\varphi(x)<\varphi(z)<\varphi(y)$.
    \end{enumerate}
  \end{enumerate}
  In all the cases we obtain that $\varphi(x)<\varphi(z)<\varphi(y)$,
  and since $\varphi(x)$ and $\varphi(y)$ belong to the interval
  $E_j$, we deduce that $\varphi(z)\in E_j$. This proves that $F_i
  \cap \varphi^{-1}E_j$ is an interval.

  Notice that along the way we also proved that $\varphi$ is
  increasing in the intervals $F_i \cap \varphi^{-1}E_j$ as well as
  the assertions concerning the images.

  The fact that the intervals $F_i \cap \varphi^{-1}E_{j}$ are
  disjoint follows immediately from the fact that the sets $E_{j}$,
  for $j=0,\dots,r$, and the sets $F_{i}$, for $i=0,\dots,s$, are
  disjoint.  This finishes the proof.
\end{proof}

\subsection{Proof of Lemma~\ref{L:bijection}}
\label{P:bijection}

\begin{proof}
  For the matrix $M=\{m_{ij}\}$, denote by $s_{ij}$ the sum of the
  entries $m_{k\ell}$ of $M$ for $(k,\ell)\le (i,j)$ with respect to
  the lexicografical order of pairs.  We define $R_{00} = [1,s_{00}]$
  and $R_{ij} = [s_{k\ell}, s_{ij}]$ where $s_{ij}$ covers
  $s_{k\ell}$.  Observe that some of the intervals $R_{ij}$ may be
  empty.  Also note that $\card R_{ij}=m_{ij}$.

  The sequence $(R_{00},R_{01},\ldots, R_{sr})$ is a pseudo-partition
  of the interval $[n]$ and $\gamma\in\B_{c(M)}$ if and only if
  $\gamma$ is increasing in $R_{ij}$ for all $i\in\{0,\ldots ,s\}$ and
  $j\in\{0,\dots ,r\}$.

  Since $M\in\M_{\alpha,\beta}$ and therefore,
  $\sum_{j}\card(R_{ij})=\sum_j m_{ij} = \card F_i$, it follows that
  \begin{equation}
    \label{E:Fiunion}
    F_i = \bigcup_j R_{ij}.
  \end{equation}
  Moreover, if $\eta\in\Sh(p+q-n,n-q)$ and $\tau\in\B_\beta^\eta$,
  then $F_i\cap \varphi_{\eta,\tau}^{-1}E_j = R_{ij}$.  This can be
  seen from the fact both sets are intervals with the same cardinal
  and from the following relation:
  \begin{equation*}
    \bigcup_j (F_i\cap \varphi^{-1}_{\eta,\tau}E_j)
    =
    F_i
    =
    \bigcup_jR_{ij}.
  \end{equation*}
  In particular, we deduce that $\varphi_{\eta,\tau}$ is increasing in
  $R_{ij}$.

  Given $(\xi,\eta,\sigma,\tau)\in S_{\alpha,\beta}(M)$, we will show
  that $g_{\xi,\eta}(\sigma,\tau)\in\B_{c(M)}$.  To prove this, since
  $\varphi_{\eta,\tau}|_{R_{ij}}$ is increasing and
  $\varphi_{\eta,\tau}R_{ij} \subseteq E_j$, we observe that
  \begin{equation*}
    (\sigma\times\Id)(\eta\times\Id)\beta_0(\Id\times\tau)_{|_{R_{ij}}}
  \end{equation*}
  is also increasing.  According to Lemma~\ref{L:interval}, the images of
  $R_{ij}$ under the previous permutation are in $[1,p]$ or $[p+1,n]$,
  where $\xi$ is increasing.  Therefore, left multiplying by $\xi$ we
  deduce that $g_{\xi,\eta}(\sigma,\tau)$ is increasing in $R_{ij}$,
  which proves that it belongs to $\B_{c(M)}$.

  We prove now that $\psi$ is bijective. Given
  $\gamma\in\B_{w(M)}$, we show that there exists a unique
  quadruple $(\xi,\eta,\sigma,\tau)\in S_{\alpha,\beta}(M)$ such that
  $\psi(\xi,\eta,\sigma,\tau)=\gamma$.

  Assume there exists such a quadruple.  Using the fact that
  $E_j=\bigcup_i\varphi_{\eta,\tau}R_{ij}$, we deduce that
  \begin{equation} 
    \label{E:construct} 
    \xi(\sigma\times\Id)E_j 
    =
    \gamma
    \bigl(\bigcup_iR_{ij}\bigr).
  \end{equation}
  This proves the uniqueness of the permutation
  $\xi(\sigma\times\Id)$, in other words, it is the only permutation
  which maps $E_j$ increasingly into the set on the right side; and
  this implies the uniqueness of $\xi$ and $\sigma$.  Therefore, we
  have that
  $
  (\eta\times\Id)\beta_0(\Id\times\tau)
  =
  (\sigma\times\Id)^{-1}\xi^{-1}\gamma
  $.
  Thus, $\eta$ is characterized by the image of $[1,n-q]$ under the
  permutation on the right, which is $\eta[p+q-n+1,p]$.  The
  uniqueness of $\tau$ follows immediately.

  Given $\gamma\in\B_{c(M)}$, to construct $(\xi,\eta,\sigma,\tau)$ we
  note that
  \begin{equation} \label{E:inM}
    \card(E_j) 
    = 
    \sum_i m_{ij} 
    = 
    \card\Bigl(\bigcup_i R_{ij}\Bigr) 
    = 
    \card\biggl(
    \gamma\Bigl(\bigcup_i R_{ij}\Bigr) \biggr),
  \end{equation}
  and, thus, we can construct a permutation $\mu$ such
  that~\eqref{E:construct} is verified, increasingly mapping $E_j$
  into $\gamma\bigl(\bigcup_i R_{ij}\bigr)$.  This permutation can be
  written as $\mu=\xi(\sigma\times\mu')$ with $\xi\in\Sh(p,n-p)$,
  $\sigma\in\S_p$ and $\mu'\in\S_{n-p}$. Since $\mu$ is increasing on
  $E_0=[p+1,n]$ we conclude that $\mu'=\Id_{n-p}$, and from the
  monotony on $E_j$ with $j>0$ we deduce that $\sigma\in\B_\alpha$.
  In the same way as before, we construct $\eta$ by mapping the
  interval $[1,n-q]$ and for this, we will show that
  \begin{equation}
    \label{E:property}
    \text{$(\sigma\times\Id)^{-1}\xi^{-1}\gamma$ is
      increasing in $F_i$ for all $i$.}
  \end{equation}
  In particular, for $i=0$, we obtain the desired property to define
  $\eta$.  We then consider $\beta_0^{-1} (\eta\times\Id)^{-1}
  (\sigma\times\Id)^{-1}\gamma$, which equals $\Id\times\tau$ for some
  $\tau\in\S_p$.  Using \eqref{E:property} for $i>0$ we conclude
  that $\tau\in\B_\beta$; and it follows from ~\eqref{E:inM} that
  the constructed $\tau$ belongs to~$\B_\beta^\eta(M)$.

  It remains to prove~\eqref{E:property}.  Take $x_1,x_2\in F_i$ with
  $x_1<x_2$.  Then, $x_1\in R_{ij_1}$ and $x_2\in R_{ij_2}$ for some
  $j_1\le j_2$.  Assume that $j_1=j_2$, then $\gamma(x_1)<\gamma(x_2)$.
  In this case, we have $\gamma(x_1)=\xi(\sigma\times\Id)(e_1)$ and
  $\gamma(x_2)=\xi(\sigma\times\Id)(e_2)$ with $e_1,e_2\in E_j$.  Since
  $\sigma$ is increasing in $E_j$ we obtain that $e_1<e_2$ as desired.

  On the other hand, if $j_1<j_2$, then $e_1\in E_{j_1}$ and $e_2\in
  E_{j_2}$ and the conclusion follows easily as all the elements of
  $E_{j_1}$ are smaller than those of $E_{j_2}$.
\end{proof}


     \bibliographystyle{amsplain}
     \bibliography{biblio/biblio}

\end{document}